\documentclass[a4paper,11pt]{article}

\usepackage[top=0.5in, bottom=1in, left=0.8in, right=0.8in, headsep=0.in, centering]{geometry}
\usepackage[utf8]{inputenc}
\usepackage{amsmath}
\usepackage{amsfonts}
\usepackage{amssymb}
\usepackage{graphicx}
\usepackage{tikz}
\usepackage{enumitem}
\usepackage{amsthm}
\usepackage{caption} 
\usepackage{array}
\usepackage{xcolor}
\usepackage{empheq}
\usepackage{float}
\usepackage[section]{placeins}
\usepackage{subcaption}
\usepackage{bookmark}
\usepackage{hyperref}
\hypersetup{colorlinks=true,linkcolor=blue,citecolor=red,urlcolor=blue}

\newtheorem{theorem}{Theorem}[section]
\newtheorem{lemma}[theorem]{Lemma}
\newtheorem{proposition}[theorem]{Proposition}

\theoremstyle{definition}
\newtheorem{definition}[theorem]{Definition}

\newtheorem{remark}[theorem]{Remark}

\def\XXint#1#2#3{{\setbox0=\hbox{$#1{#2#3}{\int}$}
\vcenter{\hbox{$#2#3$}}\kern-.5\wd0}}

\makeatletter
\def\blfootnote{\gdef\@thefnmark{}\@footnotetext}
\makeatother

\title{American condor contracts with a cash component:\\ A variational inequality approach}

\author{%
Kiyuob Jung$^1$ \quad Jehan Oh$^{2,*}$ \quad Namgwang Woo$^2$\\[0.6em]
\small $^1$Department of Mathematics, Michigan State University\\
\small East Lansing, MI 48824, USA\\[0.3em]
\small $^2$Department of Mathematics, Kyungpook National University\\
\small Daegu 41566, Republic of Korea
}
\date{}

\begin{document}
\maketitle
\begin{abstract}
    We study a jointly exercised American condor contract whose payoff is a call condor payoff plus a positive cash amount. Its plateau and positive tails produce two inner exercise boundaries and two outer cash-exercise boundaries. We construct the unique bounded solution of the associated variational inequality, strong on each side of the plateau, and identify it with the optimal stopping value. We prove monotonicity and continuity of the four boundaries and determine their limits at expiry. Following Broadie and Detemple, we express the inner boundaries as cutoffs of the boundaries of uncapped contracts with the same cash components. We also prove call-side convexity and volatility monotonicity under a sufficient condition, solve the stationary problems explicitly and present numerical results.
\end{abstract}

\begin{NoHyper}
\blfootnote{$^*$Corresponding author.}
\blfootnote{\textit{E-mail addresses.} \texttt{kyjung@msu.edu} (K. Jung), \texttt{jehan.oh@knu.ac.kr} (J. Oh), \texttt{wng3717@knu.ac.kr} (N. Woo).}
\blfootnote{\textit{2020 Mathematics Subject Classification.} Primary: 35R35; Secondary: 35K85, 91G20, 49L20.}
\blfootnote{\textit{Key words and phrases.} American option, condor spread, free boundary, option pricing, variational inequality.}
\blfootnote{Jehan Oh is supported by the National Research Foundation of Korea (NRF) grant funded by the Korea government [Grant Nos. RS-2025-00555316 and RS-2025-25415411].}
\end{NoHyper}

\section{Introduction} \label{sec1}
We study a single American-style contract whose payoff combines a condor spread with a positive cash payment. The holder chooses one exercise time, at which all four legs are settled jointly. The resulting payoff has a maximal plateau and strictly positive tails. The positive tails create an incentive to exercise there, leading to two outer cash-exercise boundaries in addition to the two inner boundaries adjacent to the plateau. Condors are among the volatility trades examined by Chaput and Ederington \cite{chaput2005volatility}, whose Eurodollar options data show that they are traded far less often than straddles and strangles.

The connection between American options, optimal stopping and variational inequalities is developed in Bensoussan and Lions \cite{bensoussan1982applications}, Jaillet, Lamberton and Lapeyre \cite{jaillet1990variational} and Peskir and Shiryaev \cite{peskir2006optimal}. Installment options are treated in \cite{yi2008variational,yang2009valuation,yang2009variational}. Jeon and Oh \cite{jeon2019valuation} established existence and uniqueness of a strong solution and studied monotonicity and smoothness of the two exercise boundaries of an American strangle. Related pricing representations were developed by Chiarella and Ziogas \cite{chiarella2005evaluation} and Qiu \cite{qiu2016early,qiu2020american}. Ha, Jeon and Ok \cite{ha2025obstacle} study an American chooser whose obstacle is the maximum of the value functions of an American call and an American put. The fixed payoff studied here instead leads to a bounded, nonconvex obstacle with a known exercise plateau.

Let $0<K_1<k_1<k_2<K_2$ be the strike prices. The jointly settled contract comprises a long call with strike $K_1$, a long put with strike $K_2$, a short put with strike $k_1$ and a short call with strike $k_2$. Its exercise payoff is
\begin{equation} \label{payoff_V0}
    V_{0}(s) = (s-K_{1})^{+}+(K_{2}-s)^{+}-(k_{1}-s)^{+}-(s-k_{2})^{+}
    =\begin{cases}
        K_2-k_1, & 0 < s \leq K_1, \\[0.15cm]
        s-K_1+K_2-k_1, & K_1 \leq s \leq k_1, \\[0.15cm]
        K_2-K_1, & k_1 \leq s \leq k_2, \\[0.15cm]
        K_2-K_1-s+k_2, & k_2 \leq s \leq K_2, \\[0.15cm]
        k_2-K_1, & s \geq K_2.
    \end{cases}
\end{equation}
The payoff is illustrated in Figure \ref{Fig : payoff}. Its graph is a trapezoid. The payoff is maximal on the plateau $[k_1,k_2]$ and is bounded below by the positive constant $\min\{K_2-k_1,\,k_2-K_1\}$. Since
\begin{equation*}
    (K_{2}-s)^{+}-(k_{1}-s)^{+}=(s-K_{2})^{+}-(s-k_{1})^{+}+K_2-k_1,
\end{equation*}
the payoff \eqref{payoff_V0} equals the four-call combination $(s-K_1)^+-(s-k_1)^+-(s-k_2)^++(s-K_2)^+$ plus the constant $K_2-k_1$. When $k_1-K_1=K_2-k_2$, this combination is the usual call condor payoff with zero tails. We do not impose this symmetry. For a European contract, the added constant contributes a discounted cash payment at maturity. Here it is collected at the chosen exercise time and therefore changes the stopping problem. In particular, the value cannot be obtained by adding a maturity bond to the value of an American call condor.

\begin{figure}[H]
\centering
\begin{tikzpicture}[scale=2.6]
    \draw[->] (0,0) -- (3.4,0) node[right] {$s$};
    \draw[->] (0,0) -- (0,1.9) node[above] {$V_0(s)$};
    \draw[very thick] (0,1) -- (1,1) -- (1.5,1.5) -- (2,1.5) -- (2.5,1) -- (3.3,1);
    \draw[dashed] (0,0) -- (1,0) -- (1.5,0.5) -- (2,0.5) -- (2.5,0) -- (3.3,0);
    \foreach \x/\l in {1/$K_1$,1.5/$k_1$,2/$k_2$,2.5/$K_2$}
        \draw (\x,0.03) -- (\x,-0.03) node[below] {\l};
    \draw (0.03,1) -- (-0.03,1) node[left] {$K_2-k_1$};
    \draw (0.03,1.5) -- (-0.03,1.5) node[left] {$K_2-K_1$};
    \draw[dotted] (1,0) -- (1,1);
    \draw[dotted] (1.5,0) -- (1.5,1.5);
    \draw[dotted] (2,0) -- (2,1.5);
    \draw[dotted] (2.5,0) -- (2.5,1);
\end{tikzpicture}
\caption{The payoff $V_0$ of the condor spread \eqref{payoff_V0} with $K_1=1$, $k_1=1.5$, $k_2=2$, $K_2=2.5$ (solid line). The dashed line is the payoff of the standard long call condor spread, which differs from $V_0$ by the constant $K_2-k_1$.}
\label{Fig : payoff}
\end{figure}

The holder may exercise at any time up to the expiration date $T>0$ and receives \eqref{payoff_V0}. We adopt the single-contract exercise convention used for the structured option strategies in \cite{chiarella2005evaluation,guan2014free,jeon2019valuation,qiu2020american} and for capped options in \cite{broadie1995american,detemple2006american}. A portfolio of separately traded American options has separate exercise rights, including those of the counterparties to its short legs, and leads to a different valuation problem.

We work in the Black--Scholes market. Under the risk-neutral measure the price $S_t$ of the underlying asset follows
\begin{equation} \label{asset_dynamics}
    \mathrm{d}S_{t}=(r-q)S_{t}\,\mathrm{d}t+\sigma S_{t}\,\mathrm{d}W_{t},
\end{equation}
where $W$ is a standard Brownian motion, the constants $\sigma>0$, $r>0$ and $q\geq0$ denote the volatility, the risk-free interest rate and the continuous dividend rate. The arbitrage-free price of the American condor spread at time $t\in[0,T]$ with $S_t=s$ is the value of the optimal stopping problem
\begin{equation} \label{optimal_stopping}
    V(s,t)=\sup_{\tau \in \mathcal{T}_{t,T}} \mathbb{E}\left[ e^{-r(\tau-t)}V_0(S_{\tau}) \,\middle|\, S_t=s \right],
\end{equation}
where $\mathcal{T}_{t,T}$ is the set of stopping times with values in $[t,T]$. The value function is expected to satisfy the variational inequality
\begin{equation} \label{condor_spread_V}
    \left\{
    \begin{array}{l}
    \displaystyle  \partial_{t} V+\frac{\sigma^{2}}{2} s^{2} \partial_{s s}V +(r-q)s \partial_{s}V -rV = 0, \quad \text{ if } V>V_{0}(s), \;\; (s,t) \in (0,\infty) \times [0,T), \\[0.3cm]
    \displaystyle  \partial_{t} V+\frac{\sigma^{2}}{2} s^{2} \partial_{s s} V+(r-q) s \partial_{s}V- rV \leq 0 , \quad \text{ if } V=V_{0}(s), \;\; (s,t) \in (0,\infty) \times [0,T),\\[0.3cm]
    \displaystyle  V(s, T)=V_{0}(s), \quad s \in (0,\infty).
    \end{array}\right.
\end{equation}
In Section \ref{sec2} we make this connection precise. We define strong solutions on each side of the plateau and a continuous, piecewise strong solution of \eqref{condor_spread_V}. Comparison on the two half lines gives uniqueness, and a verification argument stopped at the first entrance into the plateau identifies this solution with the value function \eqref{optimal_stopping}. The first hitting time of the exercise region $\{V=V_0\}$ is an optimal stopping time.

The closest precedent for the spatial reduction is Guan and Yi \cite{guan2014free}. For the American butterfly with a triangular payoff and zero tails, they use the known value at the payoff maximum to split the problem into two half-line obstacle problems. They also study boundary monotonicity, contact with the fixed boundary and convergence to the stationary problem. Here the value equals $K_2-K_1$ throughout $[k_1,k_2]$, giving the same type of reduction at the two plateau endpoints. The additional feature is the positive cash component on each side: it produces an outer exercise boundary as well as the inner boundary next to the plateau. Thus the continuation region has two components, each bounded by a cash-exercise boundary and an inner boundary that may reach the corresponding plateau endpoint. Our existence and boundary analysis builds on the variational inequality methods used for the strangle in \cite{jeon2019valuation}.

Broadie and Detemple \cite[Theorem 1]{broadie1995american} proved that the exercise boundary of an American call with a constant cap is the minimum of the cap and the ordinary uncapped call boundary. We extend this reduction to the call and put contracts with the cash components in \eqref{payoff_V0}. The corresponding inner boundaries are obtained by cutting off the uncapped boundaries at $k_1$ and $k_2$, respectively. The cash terms must remain in the uncapped problems: they change both the exercise incentives and the boundary locations. In time-to-expiry coordinates, the capped and uncapped values and outer boundaries coincide until the respective uncapped inner boundary reaches the cap. After that time we obtain a comparison for the outer boundaries, rather than a cutoff identity.

We prove monotonicity and continuity of all four boundaries and determine their expiry limits, including the zero-dividend cases. Explicit stationary solutions provide bounds uniform in the time horizon. Finally, under the sufficient condition $qk_1\geq r(K_1+k_1-K_2)$, we prove convexity of the value on the call side and deduce the corresponding volatility monotonicity of its two boundaries. The cap prevents an unconditional convexity assertion, and the volatility result is restricted to the call side. These results describe how the cash payments and the plateau affect exercise, beyond the known spatial reduction.

The rest of the paper is organized as follows. In Section \ref{sec2}, we introduce the notion of strong solution, the comparison principle and the verification theorem. In Section \ref{sec3}, we divide the problem \eqref{condor_spread_V} into two problems and show the existence and uniqueness of a $W^{2,1}_{p,\rm{loc}}$ solution for each of them, together with the corresponding uncapped problems. In Section \ref{sec4}, we define the four free boundaries, establish their monotonicity and continuity, prove the reduction to the uncapped problems and carry out a comparative static analysis with respect to the volatility. In Section \ref{sec5}, we solve the stationary problem of the American condor spread and derive bounds for the free boundaries. Section \ref{sec6} presents numerical simulations.

\section{Strong solutions, comparison principle and verification} \label{sec2}

We begin by converting the problem \eqref{condor_spread_V} into a forward parabolic problem. Letting
\begin{equation*}
    \tau = T - t,  \quad x = \ln{s}, \quad Y(x,\tau)= V(s,t),
\end{equation*}
we have $s \partial_{s}V = \partial_{x}Y$ and $s^2\partial_{ss}V = \partial_{xx}Y -\partial_{x}Y $. The problem \eqref{condor_spread_V} becomes
\begin{equation} \label{condor_spread_Y}
    \left\{
    \begin{array}{l}
    \partial_{\tau} Y-\mathcal{L}Y = 0, \quad  \text{ if } Y>Y_{0}(x),  \;\;(x,\tau) \in \mathbb{R} \times (0,T], \\[0.3cm]
    \partial_{\tau}Y-\mathcal{L}Y \geq 0 , \quad  \text{ if } Y=Y_{0}(x), \;\;(x,\tau) \in \mathbb{R} \times (0,T], \\[0.3cm]
    Y(x, 0)=Y_{0}(x), \quad x \in \mathbb{R},
    \end{array}\right.
\end{equation}
where
\begin{equation} \label{operator_L}
    \mathcal{L}Y := \frac{\sigma^2}{2}\partial_{xx}Y + \left(r-q-\frac{\sigma^2}{2} \right)\partial_{x}Y-rY
\end{equation}
and
\begin{equation*}
    Y_{0}(x) = V_0(e^x)=\left(e^x-K_{1}\right)^{+}+\left(K_{2}-e^x\right)^{+}-\left(k_{1}-e^x\right)^{+}-\left(e^x-k_{2}\right)^{+}.
\end{equation*}

We first fix the notion of strong solution used for the half-line and uncapped problems. The full condor payoff has downward jumps in its first derivative at the inner strikes. These jumps can persist for positive times, so the solution of the full problem will be defined separately, without requiring global second-order Sobolev regularity across the plateau endpoints. Let $I\subset\mathbb{R}$ be an open interval, possibly unbounded, and let $Q=I\times(0,T]$. For $1<p<\infty$, $W^{2,1}_{p}(Q)$ denotes the parabolic Sobolev space of functions $u$ with $u,\partial_x u,\partial_{xx}u,\partial_\tau u\in L^p(Q)$. For the half-line and uncapped problems, the obstacles are locally Lipschitz continuous with upward corners at finitely many points $x_1,\dots,x_m$ of $\overline{I}$. Their initial corners may prevent $W^{2,1}_p$ regularity near $(x_j,0)$ for large $p$. We therefore write
\begin{equation*}
    \mathcal{W}_p(Q) := \left\{ u \in C(\overline{I}\times[0,T]) : u \in W^{2,1}_{p}\left( Q_{R,\rho} \right) \text{ for all } R,\rho>0 \right\},
\end{equation*}
where
\begin{equation*}
    Q_{R,\rho}:=\left( (I\cap(-R,R)) \times (0,T) \right) \setminus \bigcup_{j=1}^{m} B_{\rho}(x_j,0)
\end{equation*}
and
$B_\rho(x_j,0)$ is the open disc of radius $\rho$ centered at the corner point $(x_j,0)$. Functions in $\mathcal{W}_p(Q)$ belong to $W^{2,1}_{p,\rm{loc}}(Q)$, and for $p>3$ they are continuously differentiable in $x$ on $\overline{I}\times(0,T]$ by the Sobolev embedding theorem. Their derivatives are defined almost everywhere and, for $u\in\mathcal{W}_p(Q)$ and a smooth function $\psi$, we have $\partial_x u=\psi'$, $\partial_{xx}u=\psi''$ and $\partial_\tau u=0$ almost everywhere on the coincidence set $\{u=\psi\}$, see \cite[Chapter 1]{friedman2012variational}.

\begin{definition} \label{def_solution}
    Let $\psi\in C(\overline{I})$ be an obstacle and let $g$ be a continuous function on $\partial I\times[0,T]$ with $g=\psi$ at $\tau=0$. A function $Y\in\mathcal{W}_p(Q)$ is a strong solution of the obstacle problem
    \begin{equation} \label{general_VI}
        \left\{
        \begin{array}{l}
        \partial_{\tau} Y-\mathcal{L}Y = 0, \quad  \text{ if } Y>\psi(x),  \;\;(x,\tau) \in Q, \\[0.3cm]
        \partial_{\tau}Y-\mathcal{L}Y \geq 0 , \quad  \text{ if } Y=\psi(x), \;\;(x,\tau) \in Q, \\[0.3cm]
        Y=g \;\; \text{ on } \partial I\times[0,T], \qquad Y(x, 0)=\psi(x), \quad x \in I,
        \end{array}\right.
    \end{equation}
    if $Y\geq\psi$ in $Q$, if $\partial_\tau Y-\mathcal{L}Y\geq0$ almost everywhere in $Q$, if $\partial_\tau Y-\mathcal{L}Y=0$ almost everywhere on $\{Y>\psi\}$, and if $Y$ takes the initial and boundary values in \eqref{general_VI} pointwise. A function $w\in\mathcal{W}_p(Q)$ is a supersolution of \eqref{general_VI} if $w\geq\psi$ in $Q$, if $\partial_\tau w-\mathcal{L}w\geq0$ almost everywhere in $Q$ and if $w\geq g$ on $\partial I\times[0,T]$. A function $w\in\mathcal{W}_p(Q)$ is a subsolution of \eqref{general_VI} if $\partial_\tau w-\mathcal{L}w\leq0$ almost everywhere on $\{w>\psi\}$, if $w\leq g$ on $\partial I\times[0,T]$ and if $w(\cdot,0)\leq\psi$ in $I$.
\end{definition}
The condition at $\tau=0$ for a supersolution follows from $w\geq\psi$. A strong solution is both a supersolution and a subsolution. When $I=\mathbb{R}$ there is no boundary condition, and when $I$ is a half line the boundary condition is imposed at its finite endpoint only. We define the exercise region and the continuation region of a strong solution $Y$ by
\begin{align*}
    &\mathcal{E} = \left\{(x,\tau) \in Q: Y(x,\tau) = \psi(x)\right\}, \\[0.3cm]
    &\mathcal{C} = \left\{(x,\tau) \in Q: Y(x,\tau) > \psi(x)\right\}.
\end{align*}

The following maximum principle is the main tool of the paper. It requires only interior Sobolev regularity, so it also applies to differences of spatial translates whose initial corners occur at different points. All the solutions we consider are bounded or grow at most like $e^{x}$.
\begin{lemma} \label{lem_max_principle}
    Let $1<p<\infty$ and let $w\in C(\overline I\times[0,T])\cap W^{2,1}_{p,\mathrm{loc}}(I\times(0,T))$ satisfy $w(x,\tau)\leq C e^{|x|}$ in $Q$ for some constant $C$. Suppose that $\partial_\tau w-\mathcal{L}w\leq0$ almost everywhere on $\{w>0\}$, that $w\leq0$ on $\partial I\times[0,T]$ and that $w(\cdot,0)\leq0$ in $I$. Then $w\leq0$ in $Q$.
\end{lemma}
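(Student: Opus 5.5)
The plan is to reduce the statement to a bounded region by subtracting a barrier that outgrows $e^{|x|}$, and then to run an energy estimate with the test function $v^+$. The barrier makes $v$ strictly negative near the parabolic boundary, so all integrations happen where $w$ has the interior regularity $W^{2,1}_{p,\mathrm{loc}}$. This is the reason the lemma needs no regularity up to the corners at $\tau=0$.

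\emph{Barrier.} For $\varepsilon>0$ put $h(x,\tau)=\varepsilon e^{\lambda\tau}\cosh(2x)$. A direct computation with \eqref{operator_L} gives
\begin{equation*}
\partial_\tau h-\mathcal{L}h=\varepsilon e^{\lambda\tau}\Bigl[(\lambda-2\sigma^2+r)\cosh(2x)-2\bigl(r-q-\tfrac{\sigma^2}{2}\bigr)\sinh(2x)\Bigr]\geq \varepsilon e^{\lambda\tau}\cosh(2x)>0,
\end{equation*}
provided $\lambda\geq 1+2\sigma^2+2|r-q-\sigma^2/2|$. Set $v=w-h$. Then $\{v>0\}\subset\{w>0\}$, so almost everywhere on $\{v>0\}$
\begin{equation*}
\partial_\tau v-\mathcal{L}v\leq-\varepsilon e^{\lambda\tau}\cosh(2x)<0.
\end{equation*}
The behaviour of $v$ near the parabolic boundary is as follows.
\begin{itemize}[leftmargin=*]
\item On $\partial I\times[0,T]$ and on $I\times\{0\}$ we have $v\leq-h<0$.
\item Since $w\leq Ce^{|x|}$ while $h\geq\tfrac{\varepsilon}{2}e^{2|x|}$, there is $R=R(\varepsilon)$ with $v<0$ for $|x|\geq R$.
\item By continuity of $v$ on the compact set $(\overline I\cap[-R,R])\times[0,T]$, the set $\{v\geq0\}$ is compact. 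It stays a positive distance away from $\partial I$, from $\{|x|=R\}$ and from $\{\tau=0\}$.
\end{itemize}
Hence there are a bounded interval $J\Subset I$ and a $\delta>0$ such that $v^+$ vanishes outside $J\times[\delta,T]$. On a neighbourhood of this set, $v\in W^{2,1}_p$ and $v$ is bounded.

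\emph{Energy step.} Replace the time weight by $\tilde v=e^{-\mu\tau}v$ for a suitable $\mu\geq 0$, or simply keep the $-rv$ term, which already has the good sign. Multiply $\partial_\tau v-\mathcal{L}v$ by $v^+$ and integrate over $J\times(\delta,t)$ for $t\in(\delta,T]$.
\begin{itemize}[leftmargin=*]
\item For a.e.\ $\tau$ the slice $v(\cdot,\tau)$ lies in $W^{2,p}(J)$, so $\partial_x v(\cdot,\tau)$ is absolutely continuous. Because $v^+(\cdot,\tau)$ vanishes near $\partial J$, we get $\int_J\partial_{xx}v\,v^+\,dx=-\int_J|\partial_x v^+|^2\,dx$.
\item The time term satisfies $\int_\delta^t\partial_\tau v\,v^+\,d\tau=\tfrac12\|v^+(\cdot,t)\|^2_{L^2(J)}$, since $v^+(\cdot,\delta)=0$. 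This follows from absolute continuity in $\tau$ of $W^{1,p}$ functions along a.e.\ line, together with the chain rule for the Lipschitz map $z\mapsto\tfrac12(z^+)^2$ and boundedness of $v$.
\item The drift term is bounded by $\int|\partial_x v^+|\,v^+$ and is absorbed by Young's inequality into $\tfrac{\sigma^2}{2}\int|\partial_xv^+|^2+C\int(v^+)^2$.
\end{itemize}
Because the left-hand side is $\leq0$ on the support of $v^+$, this yields
\begin{equation*}
\tfrac12\|v^+(\cdot,t)\|_{L^2(J)}^2\leq C\int_\delta^t\|v^+(\cdot,s)\|_{L^2(J)}^2\,ds .
\end{equation*}
Gronwall's inequality then gives $v^+\equiv0$, that is $w\leq h$ in $Q$. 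Letting $\varepsilon\to0$ yields $w\leq0$.

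The main obstacle is to justify the integrations by parts with only $W^{2,1}_{p,\mathrm{loc}}$ regularity for an arbitrary $p>1$. This matters because $p<2$ rules out the Aleksandrov--Bakelman--Pucci (Krylov--Tso) route, and the test function $v^+$ must not meet the corners at $\tau=0$. The compact-support observation above is exactly what handles this. If the slice-wise argument proved delicate, I would instead mollify $v$ in $(x,\tau)$ on $J\times(\delta/2,T]$. I would then apply the identities to the mollified functions and pass to the limit, using $W^{2,1}_p$ convergence together with uniform convergence of $v$ on the compact support.
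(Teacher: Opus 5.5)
Your proof is correct, but after the shared first step it takes a different route from the paper. Both proofs subtract a small multiple of the supersolution $e^{\lambda\tau}\cosh(2x)$. This makes the positive set of $v$ compact, keeps it away from $\tau=0$ and from $\partial I$, and confines it to a bounded range of $x$.

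The paper then works with the superlevel sets $D_a=\{0<\tau\le T',\ v>a\}$. It mollifies $v$ on a neighbourhood of $\overline{D_a}$ and uses the constant coefficients of $\mathcal L$ to see that the mollified functions remain classical subsolutions there. It concludes with the classical maximum principle and a limit in the mollification parameter.

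You instead test with $v^+$ and derive a Stampacchia-type energy estimate. You justify the integrations by parts for every $p>1$ in two ways: through the one-dimensional embedding $W^{2,p}(J)\subset C^1(\overline J)$ on almost every time slice, and through absolute continuity in $\tau$ along almost every line.

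The paper's route needs only the classical maximum principle and does not depend on the space dimension. It does rely on constant coefficients, so that mollification commutes with the operator. Your route avoids mollification altogether and could accommodate diffusion coefficients with some regularity in $x$. The price is the slice-wise Sobolev calculus.

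Two minor points. First, $w$ is only assumed to lie in $W^{2,1}_{p,\mathrm{loc}}(I\times(0,T))$, so you should integrate up to $t<T$ and recover $\tau=T$ by continuity, as the paper does with $T'<T$. Second, $v^+(\cdot,\tau)$ vanishes near $\partial J$, so the drift term $\int_J\partial_xv\,v^+\,\mathrm dx=\tfrac12\int_J\partial_x\bigl((v^+)^2\bigr)\,\mathrm dx$ is zero. The Young and Gronwall steps can therefore be dropped.
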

\begin{proof}
    Let $\lambda := 2\sigma^{2}+2\left|r-q-\frac{\sigma^2}{2}\right|$ and $h(x,\tau):=e^{\lambda\tau}\left(e^{2x}+e^{-2x}\right)$. A direct computation gives
    \begin{equation*}
        \partial_\tau h-\mathcal{L}h = e^{\lambda\tau}\left[\left(\lambda-2\sigma^2-2\left(r-q-\frac{\sigma^2}{2}\right)+r\right)e^{2x}+\left(\lambda-2\sigma^2+2\left(r-q-\frac{\sigma^2}{2}\right)+r\right)e^{-2x}\right]\geq0.
    \end{equation*}
    For $\delta>0$ put $v:=w-\delta h$. Then $(\partial_\tau-\mathcal L)v\leq0$ almost everywhere on $\{v>0\}$. The upper growth bound and $h\geq e^{2|x|}$ imply that $v<0$ for $|x|$ sufficiently large, uniformly in time. Also $v<0$ on the initial and finite lateral boundaries. Fix $T'<T$ and $a>0$. The closure of
    \begin{equation*}
        D_a:=\{(x,\tau):0<\tau\leq T',\ v(x,\tau)>a\}
    \end{equation*}
    is therefore compact and stays away from the initial and lateral boundaries. In a neighborhood of this closure, $v$ belongs to $W^{2,1}_p$ and is positive. Convolve $v$ there with a nonnegative smooth space-time mollifier. Since $\mathcal L$ has constant coefficients, the resulting smooth functions $v_\varepsilon$ satisfy $(\partial_\tau-\mathcal L)v_\varepsilon\leq0$ in $D_a$ for sufficiently small $\varepsilon$, and converge uniformly to $v$ on its closure. On the parabolic boundary of $D_a$ we have $v_\varepsilon\leq a+o(1)$. The classical parabolic maximum principle gives $v_\varepsilon\leq a+o(1)$ throughout $D_a$, and passing to the limit contradicts the definition of $D_a$ unless $D_a$ is empty. Since $a>0$ and $T'<T$ are arbitrary, continuity gives $v\leq0$ on $Q$. Finally let $\delta\searrow0$. This constant-coefficient argument applies to every $p>1$.
\end{proof}

\begin{lemma} \label{lem_comparison}
    Let $Y$ be a strong solution of \eqref{general_VI} with $|Y|\leq Ce^{|x|}$ in $Q$, and let $w\in\mathcal{W}_p(Q)$ satisfy $|w|\leq Ce^{|x|}$ in $Q$.
    \begin{enumerate}[label=(\roman*)]
        \item If $w$ is a supersolution of \eqref{general_VI}, then $Y\leq w$ in $Q$.
        \item If $w$ is a subsolution of \eqref{general_VI}, then $Y\geq w$ in $Q$.
        \item Let $\beta:\mathbb{R}\to\mathbb{R}$ be nondecreasing and let $\varphi\in C(\overline{I})$. If $u_1,u_2\in\mathcal{W}_p(Q)$ satisfy $|u_i|\leq Ce^{|x|}$,
        \begin{equation*}
            \partial_\tau u_1-\mathcal{L}u_1+\beta(u_1-\varphi)\leq \partial_\tau u_2-\mathcal{L}u_2+\beta(u_2-\varphi) \quad \text{ almost everywhere in } Q,
        \end{equation*}
        and $u_1\leq u_2$ on the parabolic boundary of $Q$, then $u_1\leq u_2$ in $Q$.
    \end{enumerate}
    In particular, the strong solution of \eqref{general_VI} with growth at most $Ce^{|x|}$ is unique.
\end{lemma}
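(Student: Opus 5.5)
Every part reduces to Lemma \ref{lem_max_principle}, applied to a suitable difference of two functions. In each case we check four things: the difference lies in $C(\overline I\times[0,T])\cap W^{2,1}_{p,\mathrm{loc}}(I\times(0,T))$; it grows at most like $2Ce^{|x|}$; it is $\le0$ on the lateral and initial boundaries; and $(\partial_\tau-\mathcal L)$ of it is $\le0$ almost everywhere on the set where it is positive. Membership in $\mathcal W_p(Q)$ already gives continuity up to the boundary and interior Sobolev regularity. Since $\mathcal L$ is linear, the growth bound follows from $|Y|,|w|\le Ce^{|x|}$.

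For (i), set $z:=Y-w$. On the boundary $z\le g-g=0$, and at $\tau=0$ we have $z=\psi-w(\cdot,0)\le0$ because $w\ge\psi$. On $\{z>0\}$ we have $Y>w\ge\psi$, so $(x,\tau)$ lies in the continuation set of $Y$. There $\partial_\tau Y-\mathcal LY=0$ a.e., and hence $(\partial_\tau-\mathcal L)z=-(\partial_\tau w-\mathcal Lw)\le0$ a.e. Lemma \ref{lem_max_principle} then gives $z\le0$.

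For (ii), set $z:=w-Y$. The boundary and initial inequalities hold by the definition of subsolution, using $Y=g$ and $Y(\cdot,0)=\psi$. On $\{z>0\}$ we have $w>Y\ge\psi$, so $\partial_\tau w-\mathcal Lw\le0$ a.e. there. Since $\partial_\tau Y-\mathcal LY\ge0$ a.e. everywhere, $(\partial_\tau-\mathcal L)z\le0$ a.e. on $\{z>0\}$, and the lemma gives $z\le0$.

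For (iii), set $z:=u_1-u_2$. On $\{z>0\}$ we have $u_1-\varphi>u_2-\varphi$. Because $\beta$ is nondecreasing, $\beta(u_1-\varphi)\ge\beta(u_2-\varphi)$ there. The assumed inequality then yields $(\partial_\tau-\mathcal L)z\le\beta(u_2-\varphi)-\beta(u_1-\varphi)\le0$ a.e. on $\{z>0\}$. The parabolic boundary hypothesis gives $z\le0$ on the lateral and initial boundaries, and the lemma applies.

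Uniqueness follows because a second strong solution $\tilde Y$ is both a supersolution and a subsolution. Parts (i) and (ii) then give $Y\le\tilde Y\le Y$.

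The only step needing care is the claim in (i) that $\partial_\tau Y-\mathcal LY=0$ a.e. on $\{z>0\}$. This set is contained in $\{Y>\psi\}$, where the definition of strong solution gives the equation a.e., so the claim holds. We also need to make sure that no global $W^{2,1}_p$ regularity near the initial corners is required. This is handled because Lemma \ref{lem_max_principle} requires only local interior regularity, which $\mathcal W_p(Q)$ provides.
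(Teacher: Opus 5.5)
Your proposal is correct and matches the paper's proof essentially step for step. In each part you apply Lemma \ref{lem_max_principle} to the appropriate difference, $Y-w$, $w-Y$ or $u_1-u_2$. On the positivity set you use the continuation equation for $Y$ (respectively $w$), or the monotonicity of $\beta$. Uniqueness then follows because a strong solution is both a supersolution and a subsolution.
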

\begin{proof}
    For (i), set $U:=Y-w$. On the set $\{U>0\}$ we have $Y>w\geq\psi$, so $\partial_\tau Y-\mathcal{L}Y=0$ almost everywhere there and hence $\partial_\tau U-\mathcal{L}U=-(\partial_\tau w-\mathcal{L}w)\leq0$ almost everywhere on $\{U>0\}$. Moreover $U\leq0$ on the parabolic boundary of $Q$. Lemma \ref{lem_max_principle} gives $U\leq0$. For (ii), set $U:=w-Y$. On $\{U>0\}$ we have $w>Y\geq\psi$, so $\partial_\tau w-\mathcal{L}w\leq0$ almost everywhere there, while $\partial_\tau Y-\mathcal{L}Y\geq0$ almost everywhere in $Q$. Hence $\partial_\tau U-\mathcal{L}U\leq0$ almost everywhere on $\{U>0\}$ and $U\leq0$ on the parabolic boundary, and Lemma \ref{lem_max_principle} gives $U\leq0$. For (iii), set $U:=u_1-u_2$. On $\{U>0\}$ we have $\beta(u_1-\varphi)\geq\beta(u_2-\varphi)$ almost everywhere because $\beta$ is nondecreasing, so that $\partial_\tau U-\mathcal{L}U\leq0$ almost everywhere on $\{U>0\}$, and Lemma \ref{lem_max_principle} applies again. The uniqueness follows from (i) or (ii), since a strong solution is both a supersolution and a subsolution.
\end{proof}

We now specify the solution class for the full condor problem. Write $M:=K_2-K_1$, $\ell_1:=\ln k_1$, $\ell_2:=\ln k_2$ and $I_-:=(-\infty,\ell_1)$, $I_+:=(\ell_2,\infty)$. The payoff is at most $M$ and equals $M$ on $[k_1,k_2]$. Since $r>0$, the optimal stopping value in \eqref{optimal_stopping} is at most $M$, and immediate exercise is optimal on this plateau. This observation uses no regularity of the value function.

For a fixed $p>1$, a \emph{piecewise strong solution} of \eqref{condor_spread_Y} means a bounded function $Y\in C(\mathbb R\times[0,T])$ such that $Y=M$ on $[\ell_1,\ell_2]\times[0,T]$ and, on each $I_\pm\times(0,T]$, its restriction is a strong solution in the sense of Definition \ref{def_solution}, with obstacle $Y_0|_{I_\pm}$, initial value $Y_0$ and constant boundary value $M$ at the finite endpoint. The excluded initial corner is $(\ln K_1,0)$ on $I_-$ and $(\ln K_2,0)$ on $I_+$. No matching of the one-sided spatial derivatives at $\ell_1$ or $\ell_2$ is imposed. In particular, this definition does not assert $Y\in\mathcal W_p(\mathbb R\times(0,T])$. The differential conditions in \eqref{condor_spread_Y} hold almost everywhere on the three open spatial intervals, and the global distributional inequality will follow from the construction in Theorem \ref{thm_full_existence}.

We close this section with the verification theorem. A piecewise strong solution of \eqref{condor_spread_V} means $V(s,t)=Y(\ln s,T-t)$ for such a $Y$. The exercise and continuation regions in the original variables are
\begin{equation*}
    \mathcal{E}_V=\left\{(s,t)\in(0,\infty)\times[0,T] : V(s,t)=V_0(s)\right\}, \qquad \mathcal{C}_V=\left\{(s,t)\in(0,\infty)\times[0,T] : V(s,t)>V_0(s)\right\}.
\end{equation*}
\begin{theorem} \label{thm_verification}
    Let $p>3$ and let $V$ be a piecewise strong solution of \eqref{condor_spread_V}. Then $V$ coincides with the value function \eqref{optimal_stopping}. Moreover, for every $(s,t)\in(0,\infty)\times[0,T)$ the stopping time
    \begin{equation*}
        \tau^{*}:=\inf\left\{u\in[t,T] : (S_u,u)\in\mathcal{E}_V\right\}
    \end{equation*}
    is optimal for \eqref{optimal_stopping}.
\end{theorem}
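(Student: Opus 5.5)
The plan is the standard two-inequality verification argument, with Itô's formula applied separately on each half line and stopped at the first entrance into the plateau. Fix $(s,t)$ with $t<T$; if $s\in[k_1,k_2]$ the claim is immediate, since $V=M=V_0$ there and the value is at most $M$ (as noted before the theorem, using $r>0$). By symmetry, take $s<k_1$, so $x=\ln s\in I_-$, and set
\begin{equation*}
\theta:=\inf\{u\geq t: S_u\geq k_1\}\wedge T .
\end{equation*}
On $I_-\times(0,T]$ the function $Y$ lies in $\mathcal W_p$ with $p>3$. Hence it is $C^1$ in $x$ away from the excluded corner $(\ln K_1,0)$, and its parabolic operator is defined almost everywhere. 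We apply the Itô--Krylov formula (the generalized Itô formula for $W^{2,1}_{p,\mathrm{loc}}$ functions, valid for $p\ge d+1=2$ for a nondegenerate diffusion) to $e^{-r(u-t)}V(S_u,u)$ up to the stopping time $\theta\wedge\varrho_n$. Here $\varrho_n$ localizes: it is the exit time from $\{1/n<S<k_1-1/n\}$, further capped at $T-1/n$ so as to avoid the corner at expiry. The drift term is
\begin{equation*}
e^{-r(u-t)}\bigl(\partial_t V+\tfrac{\sigma^2}{2}s^2\partial_{ss}V+(r-q)s\partial_sV-rV\bigr).
\end{equation*}
It is $\leq0$ almost everywhere and $=0$ almost everywhere on $\mathcal C_V$, and Krylov's estimate guarantees that Lebesgue-null sets are not charged by the occupation measure of $(S_u,u)$. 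The stochastic integral is a true martingale up to $\varrho_n$ because $\partial_xY$ is bounded on the localized compact set.

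\textbf{Upper bound.} Take any $\tau\in\mathcal T_{t,T}$ and consider $\tau\wedge\theta\wedge\varrho_n$. Supermartingality gives
\begin{equation*}
V(s,t)\geq \mathbb E\bigl[e^{-r(\tau\wedge\theta\wedge\varrho_n-t)}V(S_{\tau\wedge\theta\wedge\varrho_n},\tau\wedge\theta\wedge\varrho_n)\bigr].
\end{equation*}
Let $n\to\infty$. Boundedness and continuity of $V$ on $\mathbb R\times[0,T]$ justify dominated convergence, and $S$ almost surely does not hit $0$. We obtain $V(s,t)\ge \mathbb E[e^{-r(\tau\wedge\theta-t)}V(S_{\tau\wedge\theta},\tau\wedge\theta)]$. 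On $\{\theta<\tau\}$ we have $S_\theta=k_1$, where $V=M\geq V_0(S_\tau)\geq e^{-r(\tau-\theta)}V_0(S_\tau)$. On $\{\tau\le\theta\}$ we use $V\ge V_0$. Combining the two cases gives $V(s,t)\ge \mathbb E[e^{-r(\tau-t)}V_0(S_\tau)]$. Taking the supremum over $\tau$ yields $V\geq$ the value function. This stopping at $\theta$ is exactly what replaces the missing global $W^{2,1}_p$ regularity across $\ell_1$: no derivative matching at the plateau endpoint is needed.

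\textbf{Lower bound.} Take $\tau=\tau^*$. Since $\mathcal E_V\supset[k_1,k_2]\times[0,T]$ and paths are continuous, $\tau^*\le\theta$. Before $\tau^*$ the process stays in $\mathcal C_V$, where the drift vanishes almost everywhere, so the localized process is a martingale. Passing $n\to\infty$ as before gives
\begin{equation*}
V(s,t)=\mathbb E\bigl[e^{-r(\tau^*-t)}V(S_{\tau^*},\tau^*)\bigr].
\end{equation*}
Moreover $V(S_{\tau^*},\tau^*)=V_0(S_{\tau^*})$. This holds by closedness of $\mathcal E_V$, which follows from continuity of $V$ and $V_0$, together with $V(\cdot,T)=V_0$. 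Hence $V(s,t)$ is attained by $\tau^*$, so $V$ equals the value function and $\tau^*$ is optimal. The case $s>k_2$ is identical with $\theta$ the first hitting time of $k_2$.

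\textbf{Main obstacle.} The delicate step is justifying Itô's formula for a function only in $W^{2,1}_{p,\mathrm{loc}}$, away from the corner $(\ln K_1,0)$, i.e. near expiry. I would handle this via Krylov's generalized Itô formula on the localized domain; equivalently, one can mollify $Y$ and use Krylov's $L^p$ estimate for occupation densities to pass to the limit. The expiry corner is avoided by stopping at $T-1/n$ and then using continuity of $V$ up to $t=T$ and bounded convergence.
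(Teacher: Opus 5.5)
Your proposal is correct and follows essentially the paper's proof. Both use the generalized It\^{o} formula localized on one side of the plateau and stopped at the first entrance into $[k_1,k_2]$; your case split on $\{\theta<\tau\}$ is the paper's pathwise inequality $e^{-r(\theta-t)}V_0(S_\theta)\leq e^{-r((\theta\wedge\eta)-t)}V_0(S_{\theta\wedge\eta})$, and both cut off before $T$ and use the negligibility of Lebesgue-null sets to get equality at $\tau^*$. The one detail the paper treats that you do not is the starting time $t=0$: there $(s,0)$ lies on the edge $\tau=T$ of the domain, where the $\mathcal W_p$ regularity holds only up to the boundary, and the paper handles this by a Sobolev extension across that edge before applying It\^{o}'s formula.
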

\begin{proof}
    Fix $(s,t)$ with $t<T$ and let $S$ solve \eqref{asset_dynamics} with $S_t=s$. If $s\in[k_1,k_2]$, then $V(s,t)=M$ and immediate exercise is optimal, since all discounted payoffs are at most $M$. Suppose that $s$ lies outside the plateau and let
    \begin{equation*}
        \eta:=\inf\{u\geq t:S_u\in[k_1,k_2]\},\qquad \inf\emptyset:=+\infty.
    \end{equation*}
    For every $\theta\in\mathcal T_{t,T}$, continuity of $S$ and $V_0\leq M$ imply the pathwise inequality
    \begin{equation*}
        e^{-r(\theta-t)}V_0(S_\theta)
        \leq e^{-r((\theta\wedge\eta)-t)}V_0(S_{\theta\wedge\eta}).
    \end{equation*}
    Thus the supremum in \eqref{optimal_stopping} is unchanged if we restrict to stopping times $\theta\leq\eta\wedge T$.

    Let $\mathcal A V:=\partial_tV+\frac{\sigma^2}{2}s^2\partial_{ss}V+(r-q)s\partial_sV-rV$. On the side of the plateau containing $s$, the definition gives $\mathcal AV\leq0$ almost everywhere, with equality almost everywhere on $\mathcal C_V$. The logarithmic change of variables preserves local $W^{2,1}_p$ regularity on spatial intervals bounded away from zero. To localize entirely within this side, let $J_m=(1/m,k_1-1/m)$ if $s<k_1$, and $J_m=(k_2+1/m,m)$ if $s>k_2$, taking $m$ sufficiently large that $s\in J_m$. Put $\zeta_m:=\inf\{u\geq t:S_u\notin J_m\}$. For $t<b<T$ and $\theta\leq\eta\wedge T$, set $\theta_{m,b}:=\theta\wedge b\wedge\zeta_m$. On this compact spatial localization, before terminal time, the diffusion is uniformly nondegenerate and the generalized It\^{o} formula \cite[Theorem 2.10.1]{krylov1980controlled} yields
    \begin{equation} \label{ito_verification}
        \mathbb E\!\left[e^{-r(\theta_{m,b}-t)}V(S_{\theta_{m,b}},\theta_{m,b})\right]
        =V(s,t)+\mathbb E\!\left[\int_t^{\theta_{m,b}}e^{-r(u-t)}\mathcal AV(S_u,u)\,\mathrm du\right]
        \leq V(s,t).
    \end{equation}
    The stochastic integral has zero expectation because $p>3$ gives a bounded spatial derivative on each such localization. If $t=0$, a Sobolev extension across the initial calendar-time boundary permits the same application of It\^{o}'s formula. The paths of $S$ have a positive minimum and a finite maximum on bounded time intervals, so $\theta_{m,b}\to\theta\wedge b$ as $m\to\infty$. Boundedness and continuity of $V$, including its boundary value $M$ at the plateau, allow dominated convergence, first as $m\to\infty$ and then as $b\nearrow T$. Since $V\geq V_0$, we obtain
    \begin{equation*}
        \mathbb E\!\left[e^{-r(\theta-t)}V_0(S_\theta)\right]\leq V(s,t).
    \end{equation*}
    Together with the preceding restriction of the class of stopping times, this shows that $V$ dominates the value function.

    Conversely, the contact set is closed and contains the plateau and the terminal slice $\{t=T\}$. Hence $\tau^*$ is a stopping time, $\tau^*\leq\eta\wedge T$, and $V(S_{\tau^*},\tau^*)=V_0(S_{\tau^*})$. Before $\tau^*$ the process stays in $\mathcal C_V$, where $\mathcal AV=0$ almost everywhere. Since $S$ has a transition density, its expected occupation time in any space-time Lebesgue null set is zero. Consequently \eqref{ito_verification}, with $\theta=\tau^*$, holds with equality. Passing to the same limits gives
    \begin{equation*}
        V(s,t)=\mathbb E\!\left[e^{-r(\tau^*-t)}V_0(S_{\tau^*})\right].
    \end{equation*}
    Thus $V$ is the value function and $\tau^*$ is optimal. At $t=T$, the assertion follows from the terminal condition. The argument never applies It\^{o}'s formula across an inner strike.
\end{proof}

\section{The existence and uniqueness of a solution} \label{sec3}

Throughout this section $1<p<\infty$ is fixed. We first record the elementary stopping argument that determines the boundary data at the plateau.
\begin{proposition} \label{prop_K_2-K_1}
    Let $V$ be the optimal stopping value in \eqref{optimal_stopping}. Then $V_0(s)\leq V(s,t)\leq K_2-K_1$, and
    \begin{equation*}
        V(s,t)=K_2-K_1,\qquad (s,t)\in[k_1,k_2]\times[0,T].
    \end{equation*}
\end{proposition}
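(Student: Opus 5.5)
The proof needs only the definition \eqref{optimal_stopping} and the two elementary properties of the payoff read off from \eqref{payoff_V0}: $0<V_0\leq M:=K_2-K_1$ everywhere, and $V_0=M$ on $[k_1,k_2]$. No regularity of $V$ and none of the PDE results of Section \ref{sec2} will be used.

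For the lower bound we take the deterministic stopping time $\tau\equiv t$. It belongs to $\mathcal T_{t,T}$, so $V(s,t)\geq \mathbb E[V_0(S_t)\mid S_t=s]=V_0(s)$.

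For the upper bound we fix an arbitrary $\tau\in\mathcal T_{t,T}$. Since $\tau\geq t$ and $r>0$, we have $0<e^{-r(\tau-t)}\leq1$. Because $V_0(S_\tau)\in(0,M]$ pathwise, this gives $e^{-r(\tau-t)}V_0(S_\tau)\leq M$ almost surely. Taking conditional expectations and then the supremum over $\tau$ yields $V(s,t)\leq M$. The expectation is well defined because the integrand is bounded.

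Combining the two bounds on the plateau: for $s\in[k_1,k_2]$ we get $M=V_0(s)\leq V(s,t)\leq M$, hence $V(s,t)=K_2-K_1$ for every $t\in[0,T]$. At $t=T$ the only admissible stopping time is $\tau=T$, and the statement reduces to $V(s,T)=V_0(s)$, which is consistent with the bounds.

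There is no real obstacle here. The one point requiring care is that the upper bound rests on $V_0\leq M$ holding pointwise for all $s>0$, including the tails. This is checked from \eqref{payoff_V0}: $K_2-k_1<K_2-K_1$ and $k_2-K_1<K_2-K_1$, and the linear pieces interpolate between these values and $M$. The bound also uses $e^{-r(\tau-t)}\leq1$, which relies on $r>0$ (indeed $r\geq0$ would suffice) together with the positivity of $V_0$.
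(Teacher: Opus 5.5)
Your proof is correct and follows the paper's argument: immediate exercise gives $V\geq V_0$, the pathwise bound $e^{-r(\tau-t)}V_0(S_\tau)\leq \max V_0=K_2-K_1$ (using $r>0$ and $V_0>0$) gives the upper bound, and the two bounds coincide on the plateau. Your explicit check of $V_0\leq K_2-K_1$ on the tails and your remark that $r\geq 0$ would suffice are correct additions to the same argument.
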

\begin{proof}
    Immediate exercise gives $V\geq V_0$. Every admissible discounted payoff is at most $\max V_0=K_2-K_1$, since $r>0$. On the plateau the immediate payoff already equals this bound.
\end{proof}

Accordingly, the piecewise formulation of \eqref{condor_spread_Y} consists of the following two strong obstacle problems, with the fixed plateau value between them:
\begin{equation} \label{condor_spread_left}
   \begin{cases}
    \; \partial_{\tau} Y-\mathcal{L} Y = 0, \quad
    \text{ if } Y > \psi_1(x),
    & (x, \tau) \in (-\infty, \ln k_{1} ) \times (0,T] ,\\[0.3cm]
    \; \partial_{\tau} Y-\mathcal{L} Y \geq 0, \quad \text{ if } Y =\psi_1(x), & (x,\tau) \in (-\infty,\ln k_{1}) \times (0,T] ,\\[0.3cm]
    \; Y(x, 0)=\psi_1(x),& x \in (-\infty, \ln k_{1}),\\[0.3cm]
    \; Y\left(\ln k_{1}, \tau\right)=K_{2}-K_{1},& \tau \in [0,T],
   \end{cases}
\end{equation}
\begin{equation} \label{condor_spread_right}
    \begin{cases}
    \partial_{\tau} Y-\mathcal{L} Y = 0, \quad
    \text{ if } Y > \psi_2(x),& (x,\tau) \in (\ln k_{2}, \infty) \times (0,T] ,\\[0.3cm]
    \partial_{\tau} Y-\mathcal{L} Y \geq 0, \quad
    \text{ if } Y =\psi_2(x), & (x,\tau) \in (\ln k_{2}, \infty) \times (0,T] ,\\[0.3cm]
    Y(x, 0)=\psi_2(x), & x \in (\ln k_{2}, \infty), \\[0.3cm]
    Y\left(\ln k_{2}, \tau\right)=K_{2}-K_{1}, & \tau \in [0,T],
    \end{cases}
\end{equation}
where
\begin{equation} \label{def_psi12}
    \psi_1(x):=\left(e^{x}-K_{1}\right)^{+}+K_{2}-k_{1}, \qquad \psi_2(x):=\left(K_{2}-e^x\right)^{+}+k_{2}-K_{1}.
\end{equation}
Note that $\psi_1=Y_0$ on $(-\infty,\ln k_1]$ and $\psi_2=Y_0$ on $[\ln k_2,\infty)$, and that $\psi_1(\ln k_1)=\psi_2(\ln k_2)=K_2-K_1$. The problem \eqref{condor_spread_left} is the problem of an American call option with strike $K_1$ plus the cash amount $K_2-k_1$, whose value is known to be $K_2-K_1$ at the level $k_1$. The problem \eqref{condor_spread_right} is the problem of an American put option with strike $K_2$ plus the cash amount $k_2-K_1$ with the analogous property at the level $k_2$. Strong solutions of \eqref{condor_spread_left} and \eqref{condor_spread_right} are understood in the sense of Definition \ref{def_solution} with $I=(-\infty,\ln k_1)$ and $I=(\ln k_2,\infty)$, respectively, and the corner points are $(\ln K_1,0)$ and $(\ln K_2,0)$. By definition, a piecewise strong solution of \eqref{condor_spread_Y} restricts to strong solutions of \eqref{condor_spread_left} and \eqref{condor_spread_right}. Conversely, bounded strong solutions of these two problems can be joined continuously to the constant plateau value, without requiring their spatial derivatives to match the zero derivative on the plateau. Since the two problems are treated in the same way, we give the details for \eqref{condor_spread_left} only.

To prove the existence of a solution to \eqref{condor_spread_left}, we consider the truncated problems
\begin{equation} \label{condor_spread_Yn}
\left\{
\begin{array}{l}
\partial_{\tau} Y_{n}-\mathcal{L} Y_{n}=0, \text { if } Y_{n}>\psi_1(x),\quad (x, \tau ) \in(-n,\ln k_{1}) \times (0,T], \\[0.3cm]
\partial_{\tau} Y_{n}-\mathcal{L} Y_{n} \geq 0, \text { if } Y_{n}=\psi_1(x),\quad (x, \tau ) \in (-n,\ln k_{1}) \times (0,T], \\[0.3cm]
Y_{n}(-n, \tau)=K_2-k_1, \quad Y_{n} (\ln k_{1},\tau) =K_{2}-K_{1} , \quad \tau \in [0, T], \\[0.3cm]
Y_{n}(x,0)=\psi_1(x), \quad x \in [-n, \ln k_{1}],
\end{array}\right.
\end{equation}
where $ n \in \mathbb{N} $ with $ n > \ln{\frac{2}{K_{1}}}$. The condition on $n$ guarantees that $e^{-n}<K_1/2$, so that the artificial boundary $x=-n$ lies in the region where the obstacle $\psi_1$ is constant. At $x=-n$ we impose the value $\psi_1(-n)=K_2-k_1$ of the obstacle. This is the natural choice, because the solution of \eqref{condor_spread_left} coincides with the obstacle for $x$ close to $-\infty$, see Proposition \ref{prop_YV} below, and it makes the boundary data compatible with the initial data at the corners $(-n,0)$ and $(\ln k_1,0)$. We write $Q_n:=(-n,\ln k_1)\times(0,T]$.

\begin{lemma} \label{lemma_Yn}
    For any $n \in \mathbb{N}$ with $n> \ln{\frac{2}{K_{1}}}$, there exists a unique strong solution $Y_{n} \in \mathcal{W}_p(Q_n)$ of the problem \eqref{condor_spread_Yn}, and it satisfies
    \begin{equation} \label{lemma_Yn_prop1}
        \psi_1(x) \leq Y_{n}(x,\tau) \leq K_{2}-K_{1}, \qquad (x,\tau)\in\overline{Q_n}.
    \end{equation}
    Moreover, for every $\rho>0$ there is a constant $C=C(n,\rho,p)$ such that
    \begin{equation} \label{lemma_Yn_W21p}
        \left\|Y_{n}\right\|_{W_{p}^{2,1}\left(Q_n \setminus B_{\rho}(\ln K_1,0)\right)} \leq C.
    \end{equation}
\end{lemma}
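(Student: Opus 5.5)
We will prove existence by the standard penalty method, following Friedman and the strangle analysis of Jeon and Oh. Uniqueness and the bounds \eqref{lemma_Yn_prop1} will then come from Lemma \ref{lem_comparison}.

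The obstacle $\psi_1$ has an upward corner at $\ln K_1$. We therefore first mollify it to smooth functions $\psi_{1,\varepsilon}$ with the following properties: they are nondecreasing, equal to $K_2-k_1$ near $-n$ and to $\psi_1$ near $\ln k_1$, satisfy $\psi_{1,\varepsilon}\geq\psi_1$, and converge uniformly to $\psi_1$. Convexity lets us mollify in a way that preserves the upward corner's sign, so that $\psi_{1,\varepsilon}''\geq0$ is a measure bounded below. For a smooth nondecreasing penalty $\beta_\varepsilon$ with $\beta_\varepsilon(u)=0$ for $u\geq\varepsilon$, $\beta_\varepsilon(0)=-C_0$ and $\beta_\varepsilon\to-\infty$ for $u<0$, we solve the semilinear problem
\begin{equation*}
\partial_\tau Y_{n,\varepsilon}-\mathcal L Y_{n,\varepsilon}+\beta_\varepsilon(Y_{n,\varepsilon}-\psi_{1,\varepsilon})=0 \quad\text{in } Q_n,
\end{equation*}
with the boundary data of \eqref{condor_spread_Yn} and initial value $\psi_{1,\varepsilon}$. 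We choose $C_0\geq \sup|\mathcal L\psi_{1,\varepsilon}|^{-}$, where this negative part is bounded uniformly in $\varepsilon$ because only $-r\psi_{1,\varepsilon}$ and the drift term on the linear piece contribute negatively. Existence of a classical (or $W^{2,1}_p$) solution follows from the Leray--Schauder fixed point theorem together with linear $L^p$ theory. The data are compatible at the corners $(-n,0)$ and $(\ln k_1,0)$, since $\psi_{1,\varepsilon}$ is locally constant or linear there.

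Next we derive bounds uniform in $\varepsilon$, using Lemma \ref{lem_comparison}(iii).
\begin{enumerate}[label=(\alph*)]
\item The constant $K_2-K_1$ is a supersolution. Indeed, $-\mathcal L(K_2-K_1)=r(K_2-K_1)>0$ and $\beta_\varepsilon(K_2-K_1-\psi_{1,\varepsilon})\geq\beta_\varepsilon(0)$. If necessary we make $\beta_\varepsilon$ vanish on $[0,\infty)$ up to an $\varepsilon$-shift, and we compare against $K_2-K_1$ directly. Either way this gives $Y_{n,\varepsilon}\leq K_2-K_1$.
\item The obstacle $\psi_{1,\varepsilon}$ is a subsolution, because $\partial_\tau\psi_{1,\varepsilon}-\mathcal L\psi_{1,\varepsilon}+\beta_\varepsilon(0)\leq0$ by the choice of $C_0$. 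This gives $Y_{n,\varepsilon}\geq\psi_{1,\varepsilon}$.
\item The penalty term is bounded: $|\beta_\varepsilon(Y_{n,\varepsilon}-\psi_{1,\varepsilon})|\leq C_0$. Following Friedman, we show that the function $\beta_\varepsilon(Y_{n,\varepsilon}-\psi_{1,\varepsilon})$ cannot attain an interior minimum below $-C_0$, by a maximum-principle argument on the set where it is negative.
\end{enumerate}

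With the right-hand side bounded in $L^\infty$, interior and boundary $L^p$ estimates give a bound on $\|Y_{n,\varepsilon}\|_{W^{2,1}_p(Q_n\setminus B_\rho(\ln K_1,0))}$ that is uniform in $\varepsilon$. Near the excluded corner the initial data are only Lipschitz, which is why the disc is removed. Away from it, the initial data $\psi_{1,\varepsilon}$ are smooth with bounded second derivatives, uniformly in $\varepsilon$, and are compatible with the lateral data. Weak compactness in $W^{2,1}_p$ and uniform convergence, which follows from the Hölder embedding together with the uniform $L^\infty$ bounds and equicontinuity near the corner from the barriers, yield a limit $Y_n$. Passing to the limit gives $Y_n\geq\psi_1$ and $\partial_\tau Y_n-\mathcal LY_n\geq0$ a.e., since $-\beta_\varepsilon\geq0$. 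On $\{Y_n>\psi_1\}$ we have $Y_{n,\varepsilon}-\psi_{1,\varepsilon}>\varepsilon$ eventually, so the equation holds there. Thus $Y_n$ is a strong solution satisfying \eqref{lemma_Yn_prop1} and \eqref{lemma_Yn_W21p}.

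Uniqueness follows from Lemma \ref{lem_comparison}, since the domain is bounded and the growth condition is trivial. Continuity of $Y_n$ up to the corner $(\ln K_1,0)$ requires a barrier near this point. We will use local barriers $\psi_1\pm$ (linear in $|x-\ln K_1|$ plus $C\tau$-type terms) to show that $Y_{n,\varepsilon}$ is equicontinuous there.

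The main obstacle is the uniform penalty bound (c) in the presence of the corner. We handle it by exploiting the fact that the corner is convex, so that $\mathcal L\psi_1$ is bounded below in the distributional sense.
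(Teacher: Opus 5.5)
Your route is the paper's: a convex smoothing $\psi_{1,\varepsilon}$ of the corner, a penalty with $\beta_\varepsilon(0)=-C_0$, the obstacle as a subsolution, $W^{2,1}_p$ bounds uniform in $\varepsilon$ off $B_\rho(\ln K_1,0)$, and passage to the limit. However, step (a) fails as written.

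For the constant $K_2-K_1$ to be a supersolution of the penalized equation you need
\[
r(K_2-K_1)+\beta_\varepsilon\bigl(K_2-K_1-\psi_{1,\varepsilon}(x)\bigr)\geq0 \quad\text{in } Q_n .
\]
Since $K_2-K_1-\psi_{1,\varepsilon}(x)=k_1-e^x\to0$ as $x\to\ln k_1$, this forces $C_0\leq r(K_2-K_1)$. Step (b), however, forces $C_0\geq\sup(-\mathcal L\psi_{1,\varepsilon})$. Near $\ln k_1$, where $\psi_{1,\varepsilon}=\psi_1$, one has $-\mathcal L\psi_1=qe^x-r(K_1+k_1-K_2)\to r(K_2-K_1)+(q-r)k_1$. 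So (a) and (b) are incompatible whenever $q>r$, as in the example of Section \ref{sec6}, and always with the paper's $C_0$.

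Your fallback of letting $\beta_\varepsilon$ vanish on $[0,\infty)$ destroys (b), because $-\mathcal L\psi_{1,\varepsilon}=r(K_2-k_1)>0$ on $(-n,\ln K_1)$. The repair is what the paper does. At the $\varepsilon$ level, compare with $K_2-K_1+C_0\tau$: indeed $(\partial_\tau-\mathcal L)Y_{n,\varepsilon}=-\beta_\varepsilon(Y_{n,\varepsilon}-\psi_{1,\varepsilon})\leq C_0\leq(\partial_\tau-\mathcal L)(K_2-K_1+C_0\tau)$. This gives the uniform $L^\infty$ bound the estimates need. The bound $Y_n\leq K_2-K_1$ is obtained only after $\varepsilon\searrow0$, from Lemma \ref{lem_comparison}(i). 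There $K_2-K_1$ is a genuine supersolution of the unpenalized obstacle problem \eqref{condor_spread_Yn}.

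The step you call the main obstacle, (c), is in fact immediate. Once (b) gives $Y_{n,\varepsilon}\geq\psi_{1,\varepsilon}$, monotonicity of $\beta_\varepsilon$ yields $-C_0=\beta_\varepsilon(0)\leq\beta_\varepsilon(Y_{n,\varepsilon}-\psi_{1,\varepsilon})\leq0$, which is \eqref{penalty_bounded}. No Friedman-type maximum principle for $\beta_\varepsilon(Y_{n,\varepsilon}-\psi_{1,\varepsilon})$ is needed; convexity of the corner enters only through (b).

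For equicontinuity near $(\ln K_1,0)$ your local barriers would work. The paper instead uses the global H\"older estimate up to the parabolic boundary, which applies because the right-hand side is bounded by $C_0$ and $\psi_{1,\varepsilon}$ is Lipschitz uniformly in $\varepsilon$.

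You should also supply the a priori bound on $\{w=\lambda Fw\}$ required by Schaefer's theorem. The paper obtains it from $w\geq0$ and $0\leq-\lambda\beta_\varepsilon(w-\psi_{1,\varepsilon})\leq-\beta_\varepsilon(-(K_2-K_1))$.
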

\begin{proof}
    The uniqueness follows from Lemma \ref{lem_comparison}. For the existence we use a penalty method. Let $C_0:=(r+q)k_1+r(1+K_2)$ and define a family of penalty functions $ \beta_{\varepsilon} \in C^{\infty}(\mathbb{R}) $, $ 0 < \varepsilon < 1 $, satisfying
    \begin{equation} \label{penalty_ft}
        \left\{
        \begin{array}{l}
            \beta_{\varepsilon}(t) \leq 0, \quad \beta_{\varepsilon}^{\prime}(t) \geq 0, \quad \beta_{\varepsilon}^{\prime \prime}(t) \leq 0, \quad \forall t \in \mathbb{R}, \\[0.3cm]
            \beta_{\varepsilon}(t)=0 \;\; \text { if } \;\; t \geq \varepsilon, \qquad \beta_{\varepsilon}(0)=-C_{0}, \\[0.3cm]
            \displaystyle \lim_{\varepsilon \searrow 0} \beta_{\varepsilon}(t)=0 \quad \text { if } \quad t>0, \qquad \displaystyle \lim_{\varepsilon \searrow 0} \beta_{\varepsilon}(t)=-\infty \quad \text { if } \quad t<0 .
        \end{array} \right.
    \end{equation}
    We also define a family of functions $\varphi_{\varepsilon} \in C^{\infty}(\mathbb{R}) $, $ 0<\varepsilon<1 $, satisfying
    \begin{eqnarray} \label{psi_epsilon}
            \left\{
        \begin{array}{l}
            \varphi_{\varepsilon}(t) \geq 0, \quad 0 \leq \varphi_{\varepsilon}^{\prime}(t) \leq 1, \quad \varphi_{\varepsilon}^{\prime \prime}(t) \geq 0, \quad \forall t \in \mathbb{R}, \\[0.3cm]
            \varphi_{\varepsilon}(t)=t \quad \text { if } \quad t \geq \varepsilon, \qquad \varphi_{\varepsilon}(t)=0 \quad \text { if } \quad t \leq-\varepsilon, \\[0.3cm]
            \displaystyle \lim _{\varepsilon \searrow 0} \varphi_{\varepsilon}(t)=t^{+}, \quad \forall t \in \mathbb{R} ,
        \end{array}\right.
    \end{eqnarray}
    so that
    \begin{equation} \label{prop1_varphi_pf}
        t^{+} \leq \varphi_{\varepsilon}(t) \leq (t+\varepsilon)^{+}, \quad \forall t \in \mathbb{R}.
    \end{equation}
    We write $\psi_{1,\varepsilon}(x):=\varphi_{\varepsilon}(e^x-K_1)+K_2-k_1$, which is a smooth function converging uniformly to $\psi_1$ as $\varepsilon\searrow0$, and we always take $0<\varepsilon<\min\{K_1/2,\,k_1-K_1\}$. Then $\psi_{1,\varepsilon}(-n)=K_2-k_1$ and $\psi_{1,\varepsilon}(\ln k_1)=K_2-K_1$ by the choice of $n$ and $\varepsilon$. The penalized problem is
    \begin{equation} \label{condor_spread_Yne}
            \left\{
        \begin{array}{l}
            \partial_{\tau} Y_{n, \varepsilon}-\mathcal{L} Y_{n, \varepsilon} +\beta_{\varepsilon} \left(Y_{n, \varepsilon} -\psi_{1,\varepsilon}(x)\right)=0
            , \quad (x, \tau) \in Q_n, \\[0.3cm]
            Y_{n, \varepsilon}(-n, \tau)=K_{2}-k_{1}, \quad Y_{n, \varepsilon}(\ln k_{1}, \tau)=K_{2}-K_{1}, \quad \tau \in[0, T], \\[0.3cm]
            Y_{n, \varepsilon}(x,0)=\psi_{1,\varepsilon}(x), \quad x \in [-n, \ln k_{1}].
        \end{array}\right.
    \end{equation}

    We first show that \eqref{condor_spread_Yne} has a solution $Y_{n,\varepsilon}\in W^{2,1}_p(Q_n)$ for every $1<p<\infty$ by the Leray--Schauder fixed point theorem in the form of Schaefer, see \cite[Section 9.2.2, Theorem 4]{evans2010partial}. Let $ B:=C(\overline{Q_n}) $ with the supremum norm. For $ w \in B $ let $ Fw $ be the solution of the linear problem
    \begin{equation} \label{penalty_problem}
        \left\{
        \begin{array}{l}
        \partial_{\tau} u-\mathcal{L}u=
        -\beta_{\varepsilon}\left(w-\psi_{1,\varepsilon}(x)\right), \quad (x, \tau) \in Q_n, \\[0.3cm]
        u(-n, \tau)=K_2-k_1, \quad u(\ln k_1, \tau)=K_2-K_1, \quad \tau \in[0, T], \\[0.3cm]
        u(x,0)=\psi_{1,\varepsilon}(x), \quad x \in [-n, \ln k_1].
        \end{array}\right.
    \end{equation}
    The right-hand side of \eqref{penalty_problem} is bounded, the initial datum is smooth and the boundary data are constants which agree with the initial datum at the two corners. Hence, by \cite[Chapter IV, Theorem 9.1]{ladyzhenskaia1988linear}, the problem \eqref{penalty_problem} has a unique solution $u\in W^{2,1}_p(Q_n)$ for every $1<p<\infty$, and
    \begin{equation} \label{penalty_W21p_estimate}
        \|Fw\|_{W^{2,1}_p(Q_n)}\leq C\left(\left\|\beta_{\varepsilon}\left(w-\psi_{1,\varepsilon}\right)\right\|_{L^\infty(Q_n)}+1\right)
    \end{equation}
    with a constant $C=C(n,p,T,\varepsilon)$, also depending on the fixed model parameters. Here $\varepsilon$ is fixed, and its appearance accounts for the trace norm of the smooth initial datum. Uniform estimates as $\varepsilon\searrow0$ are obtained separately below. Since $\beta_{\varepsilon}$ is continuous, the right-hand side of \eqref{penalty_W21p_estimate} is bounded on bounded subsets of $B$, and for $p>3$ the embedding $W^{2,1}_p(Q_n)\subset C^{\alpha,\alpha/2}(\overline{Q_n})$ with $\alpha=1-3/p$ is compact. Therefore $F$ maps bounded subsets of $B$ into precompact subsets of $B$. If $ w_j \to w$ in $B$, then $ Fw_j-Fw$ vanishes on the parabolic boundary of $Q_n$ and satisfies
    \begin{equation*}
        \left|(\partial_{\tau}-\mathcal{L})(Fw_j-Fw)\right| = \left|\beta_{\varepsilon}(w_j-\psi_{1,\varepsilon}) -\beta_{\varepsilon}(w-\psi_{1,\varepsilon})\right| \leq \Lambda \|w_j-w\|_{L^\infty(Q_n)},
    \end{equation*}
    where $\Lambda$ is a bound for $\beta_{\varepsilon}^{\prime}$ on the bounded range of the arguments. Comparing $Fw_j-Fw$ with the functions $\pm\Lambda\|w_j-w\|_{L^\infty(Q_n)}\tau$ by Lemma \ref{lem_max_principle}, we get
    \begin{equation*}
        \|Fw_j-Fw \|_{L^{\infty}(Q_n)} \leq \Lambda T \| w_j-w \|_{L^{\infty}(Q_n)},
    \end{equation*}
    so $F:B\to B$ is continuous and compact. It remains to show that the set of all $w\in B$ with $w=\lambda Fw$ for some $\lambda\in[0,1]$ is bounded. Such a $w$ belongs to $W^{2,1}_p(Q_n)$ and solves
    \begin{equation*}
        \partial_{\tau} w-\mathcal{L}w=-\lambda\beta_{\varepsilon}\left(w-\psi_{1,\varepsilon}(x)\right) \geq 0 \quad \text{ in } Q_n
    \end{equation*}
    with the boundary and initial data of \eqref{penalty_problem} multiplied by $\lambda$, which are nonnegative. Lemma \ref{lem_max_principle} applied to $-w$ gives $w\geq0$. Consequently $w-\psi_{1,\varepsilon}\geq-\psi_{1,\varepsilon}(\ln k_1)=-(K_2-K_1)$ in $Q_n$, and since $\beta_{\varepsilon}$ is nondecreasing,
    \begin{equation*}
        0\leq-\lambda\beta_{\varepsilon}\left(w-\psi_{1,\varepsilon}\right)\leq -\beta_{\varepsilon}(-(K_2-K_1))=:C_1(\varepsilon).
    \end{equation*}
    Comparing $w$ with $K_2-K_1+C_1(\varepsilon)\tau$ by Lemma \ref{lem_max_principle}, we obtain $0\leq w\leq K_2-K_1+C_1(\varepsilon)T$. Schaefer's theorem now yields a fixed point $Y_{n,\varepsilon}=FY_{n,\varepsilon}\in W^{2,1}_p(Q_n)$, which is a solution of \eqref{condor_spread_Yne}. It is unique by Lemma \ref{lem_comparison} (iii).

    Next, we claim that
    \begin{equation} \label{prop1_varphi}
         \psi_{1,\varepsilon}(x) \leq Y_{n, \varepsilon}(x,\tau) \leq K_{2}-K_{1}+C_0\tau, \qquad (x,\tau)\in\overline{Q_n}.
    \end{equation}
    Using $\varphi_{\varepsilon}^{\prime \prime}\geq 0$, $0 \leq \varphi_{\varepsilon}^{\prime} \leq 1 $, \eqref{prop1_varphi_pf} and $e^x\leq k_1$ in $Q_n$, we compute
    \begin{align*}
        & \partial_{\tau}\psi_{1,\varepsilon}-\mathcal{L}\psi_{1,\varepsilon}+  \beta_{\varepsilon}\left(\psi_{1,\varepsilon}-\psi_{1,\varepsilon}\right) \\[0.3cm]
        & =-\frac{\sigma^{2}}{2} \varphi_{\varepsilon}^{\prime \prime}\left(e^{x}-K_{1}\right)e^{2x} -(r-q)\varphi_{\varepsilon}^{\prime} \left(e^{x}-K_{1}\right)e^{x} + r\left[\varphi_{\varepsilon}\left(e^{x}-K_{1}\right)+K_{2}-k_{1}\right]-C_{0} \\[0.3cm]
        & \leq q e^{x} +r\left(e^{x}-K_{1}+\varepsilon\right)^{+}+ r(K_{2}-k_{1})-C_{0} \\[0.3cm]
        & \leq (r+q)k_1 +r(1+K_{2}) -C_{0} = 0 .
    \end{align*}
    Hence $\psi_{1,\varepsilon}$ is a subsolution of the penalized equation, and it coincides with $Y_{n,\varepsilon}$ on the parabolic boundary of $Q_n$. Lemma \ref{lem_comparison} (iii) gives $\psi_{1,\varepsilon}\leq Y_{n,\varepsilon}$. In particular the argument of $\beta_\varepsilon$ in \eqref{condor_spread_Yne} is nonnegative, and the definition of $\beta_{\varepsilon}$ gives
    \begin{equation} \label{penalty_bounded}
        -C_{0} \leq \beta_{\varepsilon}\left(Y_{n, \varepsilon} - \psi_{1,\varepsilon}\right) \leq 0 \quad \text{ in } Q_n.
    \end{equation}
    Consequently $\partial_\tau Y_{n,\varepsilon}-\mathcal{L}Y_{n,\varepsilon}\leq C_0$, while $(\partial_\tau-\mathcal{L})(K_2-K_1+C_0\tau)=C_0+r(K_2-K_1+C_0\tau)\geq C_0$ and $K_2-K_1+C_0\tau\geq Y_{n,\varepsilon}$ on the parabolic boundary of $Q_n$. Lemma \ref{lem_max_principle} applied to $Y_{n,\varepsilon}-(K_2-K_1+C_0\tau)$ gives the upper bound in \eqref{prop1_varphi}.

    By \eqref{penalty_bounded}, $Y_{n,\varepsilon}$ solves a linear equation with a right-hand side bounded by $C_0$, with constant boundary data and with initial data $\psi_{1,\varepsilon}$, which are Lipschitz continuous with constant $k_1$ uniformly in $\varepsilon$. The H\"{o}lder estimate up to the boundary, see \cite[Theorem 6.33]{lieberman1996second} and \cite[Chapter III]{ladyzhenskaia1988linear}, yields
    \begin{equation} \label{Yne_Calpha}
        \left\|Y_{n, \varepsilon}\right\|_{C^{\alpha, \alpha / 2}\left(\overline{Q_n}\right)} \leq C
    \end{equation}
    for some $\alpha\in(0,1)$ and some constant $ C $ independent of $\varepsilon$. Moreover, for $\varepsilon<K_1(1-e^{-\rho})$ the initial data $\psi_{1,\varepsilon}$ coincide with $\psi_1$ outside $(\ln K_1-\rho,\ln K_1+\rho)$, and $\psi_1$ is smooth there. The local $W^{2,1}_p$ estimates, see \cite[Chapter IV, Section 10]{ladyzhenskaia1988linear} and \cite[Theorem 7.22]{lieberman1996second}, together with \eqref{penalty_bounded} give
    \begin{equation} \label{Yne_W21p}
        \left\|Y_{n, \varepsilon}\right\|_{W_{p}^{2,1}\left(Q_n\setminus B_{\rho}(\ln K_1,0)\right)} \leq C(n,\rho,p)
    \end{equation}
    uniformly in $\varepsilon$. By \eqref{Yne_Calpha} and \eqref{Yne_W21p}, there are a sequence $\varepsilon_j\searrow0$ and a function $Y_n\in\mathcal{W}_p(Q_n)$ such that
    \begin{equation} \label{lemma_conv_w}
        Y_{n, \varepsilon_j} \rightharpoonup Y_{n} \quad \text { weakly in }  W_{p}^{2,1}\left(Q_n \setminus B_{\rho}(\ln K_1,0) \right) \text{ for every } \rho>0
    \end{equation}
    and
    \begin{equation} \label{lemma_conv_c}
        Y_{n, \varepsilon_j} \rightarrow Y_{n} \quad \text { in } C(\overline{Q_n}).
    \end{equation}
    The lower bound in \eqref{lemma_Yn_prop1} follows from \eqref{prop1_varphi}, and \eqref{lemma_Yn_W21p} follows from \eqref{Yne_W21p} and the weak lower semicontinuity of the norm. It remains to verify that $Y_n$ is a strong solution of \eqref{condor_spread_Yn}, since then the upper bound in \eqref{lemma_Yn_prop1} follows from Lemma \ref{lem_comparison} (i), the constant $K_2-K_1$ being a supersolution of \eqref{condor_spread_Yn}. The initial and boundary conditions follow from \eqref{lemma_conv_c}. By \eqref{penalty_bounded} and \eqref{lemma_conv_w}, $\partial_\tau Y_n-\mathcal{L}Y_n\geq0$ almost everywhere in $Q_n$, since the weak limit of nonnegative functions is nonnegative. Let $(x_0,\tau_0)\in Q_n$ with $Y_n(x_0,\tau_0)>\psi_1(x_0)$. By \eqref{lemma_conv_c} and the uniform convergence of $\psi_{1,\varepsilon}$, there are a neighborhood $N$ of $(x_0,\tau_0)$ and $j_0$ such that $Y_{n,\varepsilon_j}-\psi_{1,\varepsilon_j}\geq\varepsilon_j$ in $N$ for $j\geq j_0$, so that the penalty term vanishes in $N$ and $\partial_\tau Y_{n,\varepsilon_j}-\mathcal{L}Y_{n,\varepsilon_j}=0$ in $N$. Passing to the weak limit gives $\partial_\tau Y_n-\mathcal{L}Y_n=0$ almost everywhere in $N$. Hence $\partial_\tau Y_n-\mathcal{L}Y_n=0$ almost everywhere on $\{Y_n>\psi_1\}$, and $Y_n\geq\psi_1$ by \eqref{prop1_varphi}. This completes the proof.
\end{proof}

\begin{theorem} \label{theorem_exist}
    There exists a unique bounded strong solution $Y\in\mathcal{W}_p\left((-\infty,\ln k_1)\times(0,T]\right)$ of the problem \eqref{condor_spread_left}. Furthermore, we have
    \begin{equation} \label{condor_spread_prop1}
    \psi_1(x) \leq Y(x,\tau) \leq K_{2}-K_{1}, \qquad (x,\tau)\in(-\infty,\ln k_1]\times[0,T],
    \end{equation}
    \begin{equation} \label{condor_spread_prop2}
        \partial_{\tau}Y \geq 0, \quad 0 \leq \partial_{x}Y\leq e^x \qquad \text{ almost everywhere in } (-\infty,\ln k_1)\times(0,T].
    \end{equation}
\end{theorem}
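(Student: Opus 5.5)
Uniqueness is immediate from Lemma \ref{lem_comparison}, since any bounded strong solution has growth at most $Ce^{|x|}$. For existence I will pass to the limit $n\to\infty$ in the truncated solutions $Y_n$ of Lemma \ref{lemma_Yn}. First I show that $Y_n$ is nondecreasing in $n$. On $Q_n$, the function $Y_{n+1}$ satisfies $Y_{n+1}\geq\psi_1$. It solves the same obstacle problem with the same data at $\ln k_1$ and at $\tau=0$, and at $x=-n$ it satisfies $Y_{n+1}(-n,\tau)\geq\psi_1(-n)=K_2-k_1=Y_n(-n,\tau)$. Hence $Y_{n+1}$ is a supersolution of \eqref{condor_spread_Yn}, and Lemma \ref{lem_comparison}(i) gives $Y_n\leq Y_{n+1}$ on $Q_n$.

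Since $\psi_1\leq Y_n\leq K_2-K_1$ by \eqref{lemma_Yn_prop1}, the sequence converges pointwise to a limit $Y$ satisfying \eqref{condor_spread_prop1}. To upgrade this convergence, I use interior and boundary-local $W^{2,1}_p$ estimates on $(-R,\ln k_1)\times(0,T]\setminus B_\rho(\ln K_1,0)$ for $n>R+1$. These estimates are uniform in $n$, because $0\leq\partial_\tau Y_n-\mathcal L Y_n\leq C_0$. The bound $C_0$ is inherited from \eqref{penalty_bounded} via weak limits. For this step I revisit the proof of Lemma \ref{lemma_Yn} and take the localized estimate in \eqref{Yne_W21p} with a cutoff away from $x=-n$, so that the constant depends only on $R,\rho,p$. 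Hölder estimates near $\tau=0$ and $x=\ln k_1$ give equicontinuity and hence local uniform convergence, so $Y$ is continuous up to the boundary. The identification of $Y$ as a strong solution then repeats the last paragraph of the proof of Lemma \ref{lemma_Yn}. Weak limits preserve $\partial_\tau Y-\mathcal LY\geq0$. On $\{Y>\psi_1\}$ we have $Y_n>\psi_1$ nearby for large $n$, by monotone convergence together with continuity and Dini's theorem on compacts. The equation therefore holds there.

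For \eqref{condor_spread_prop2} I argue at the penalized level and pass to the limit. For $\partial_\tau Y\geq0$, compare $Y_{n,\varepsilon}(x,\tau+h)$ with $Y_{n,\varepsilon}(x,\tau)$ using Lemma \ref{lem_comparison}(iii). The boundary data are constant in $\tau$, and at $\tau=0$ we have $Y_{n,\varepsilon}(\cdot,h)\geq\psi_{1,\varepsilon}$ by \eqref{prop1_varphi}. So the time-shift dominates, and dividing by $h$ gives the claim.

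For $\partial_xY\geq0$, compare the spatial translate $Y_{n,\varepsilon}(x-\delta,\tau)$ with $Y_{n,\varepsilon}$ on the shifted domain, using Lemma \ref{lem_max_principle}, which tolerates corners at different points. The obstacle $\psi_1$ is nondecreasing, and the boundary values satisfy $K_2-k_1\leq Y$ at the left end and $Y\leq K_2-K_1$ at the right end, so the comparison goes through. For $\partial_xY\leq e^x$, I compare $e^{x}$-type perturbations. Set $W:=Y(x+\delta,\tau)-e^\delta Y(x,\tau)$. I want to show $W\leq0$ plus a correction, or equivalently $Y(x+\delta,\tau)-Y(x,\tau)\leq e^{x+\delta}-e^x$.

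The main obstacle is this upper gradient bound. I expect it to be hardest because the constant cash term breaks homogeneity. The plan is to apply Lemma \ref{lem_max_principle} directly to $w:=Y(x+\delta,\tau)-Y(x,\tau)-(e^{x+\delta}-e^x)$ on $(-\infty,\ln k_1-\delta)$. On $\{w>0\}$ we have $Y(x+\delta,\tau)>Y(x,\tau)+e^{x+\delta}-e^x\geq\psi_1(x)+e^{x+\delta}-e^x\geq\psi_1(x+\delta)$, since $\psi_1$ has slope at most $e^x$. Hence the shifted function is in its continuation region, so $(\partial_\tau-\mathcal L)Y(\cdot+\delta)=0$. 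The term $-Y(x,\tau)$ contributes $\leq0$ since $(\partial_\tau-\mathcal L)Y\geq0$. Finally, $(\partial_\tau-\mathcal L)(-(e^{x+\delta}-e^x))=-q(e^{x+\delta}-e^x)\leq0$. On the right boundary $x=\ln k_1-\delta$ I need $K_2-K_1-Y(\ln k_1-\delta,\tau)\leq k_1(1-e^{-\delta})$. This follows from $Y\geq\psi_1$ and $\psi_1(\ln k_1-\delta)=K_2-K_1-k_1(1-e^{-\delta})$. At $\tau=0$ the same slope bound on $\psi_1$ applies. Dividing by $\delta$ then yields $\partial_xY\leq e^x$.
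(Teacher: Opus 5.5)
Your overall route is the paper's. You truncate, get $n$-uniform local $W^{2,1}_p$ and H\"older bounds from a bounded right-hand side, identify the limit, and derive \eqref{condor_spread_prop2} by comparing with time and space translates. Your bound $0\le(\partial_\tau-\mathcal L)Y_n\le C_0$ plays the role of the paper's $|f_n|\le qk_1+r(K_2-K_1+k_1)$. Several of your variations are correct:
\begin{itemize}
\item The monotonicity $Y_n\le Y_{n+1}$, from Lemma \ref{lem_comparison}(i), removes the subsequence extraction.
\item The time-shift comparison at the penalized level via Lemma \ref{lem_comparison}(iii) is valid, because \eqref{prop1_varphi} orders the initial data.
\item Your proof of $\partial_xY\le e^x$ is the paper's comparison of $W$ with $\widetilde W$, after the change of variable $x\mapsto x-\delta$.
\end{itemize}
You can drop the abandoned line with $e^\delta Y$.

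The step that does not go through as written is $\partial_xY\ge0$ at the penalized level. At the right end you need $Y_{n,\varepsilon}(\ln k_1-\delta,\tau)\le K_2-K_1$. Lemma \ref{lemma_Yn} only gives $Y_{n,\varepsilon}\le K_2-K_1+C_0\tau$. Moreover, the constant $K_2-K_1$ is not a supersolution of \eqref{condor_spread_Yne}. Within distance about $\varepsilon/k_1$ of $\ln k_1$, the argument $K_2-K_1-\psi_{1,\varepsilon}$ lies in $[0,\varepsilon)$, so the penalty there can be close to $\beta_\varepsilon(0)=-C_0$, and $C_0>r(K_2-K_1)$. Nothing you cite prevents the penalized solution from slightly overshooting the cap near $\ln k_1$. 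The bound $Y\le K_2-K_1$ is only known after $\varepsilon\searrow0$.

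There are two easy repairs.
\begin{itemize}
\item Do this comparison for $Y$ itself, exactly as you do for the upper gradient bound. On $\{Y(x-\delta,\tau)>Y(x,\tau)\}$ you have $Y(x-\delta,\tau)>\psi_1(x)\ge\psi_1(x-\delta)$, so the translate is in continuation. Then \eqref{condor_spread_prop1} supplies the right-end inequality and Lemma \ref{lem_max_principle} concludes. This is the paper's argument.
\item Stay at the penalized level and compare $Y_{n,\varepsilon}(x-\delta,\tau)$ with $Y_{n,\varepsilon}(x,\tau)+m_\varepsilon$, where $m_\varepsilon:=\max_{\overline{Q_n}}\bigl(Y_{n,\varepsilon}-(K_2-K_1)\bigr)^+$. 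This tends to $0$ by \eqref{lemma_conv_c} and $Y_n\le K_2-K_1$. The added constant contributes $-rm_\varepsilon\le0$ to $(\partial_\tau-\mathcal L)$ of the difference. On the positivity set the penalty terms remain ordered, since $\psi_{1,\varepsilon}$ is nondecreasing and $\beta_\varepsilon$ is nondecreasing.
\end{itemize}
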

\begin{proof}
    The uniqueness follows from Lemma \ref{lem_comparison}. Let $R>\ln\frac{2}{K_1}$ and $n>R$, and let $Y_n$ be the solution given by Lemma \ref{lemma_Yn}. Since $Y_n\in W^{2,1}_{p,\rm{loc}}(Q_n)$, its derivatives coincide almost everywhere on the coincidence set $\{Y_n=\psi_1\}$ with those of $\psi_1$, namely $\partial_\tau Y_n=0$ and $\mathcal{L}Y_n=\mathcal{L}\psi_1$ almost everywhere on $\{Y_n=\psi_1\}\cap\{x\neq\ln K_1\}$, while $\partial_\tau Y_n-\mathcal{L}Y_n=0$ almost everywhere on $\{Y_n>\psi_1\}$. Hence the problem \eqref{condor_spread_Yn} is equivalent to the linear problem
    \begin{equation} \label{Yn_linear}
        \left\{\begin{array}{ll}
        \partial_{\tau} Y_{n}-\mathcal{L} Y_{n}=f_n(x, \tau), & (x,\tau) \in Q_n,\\[0.3cm]
        Y_{n}(-n, \tau)=K_2-k_1, \quad Y_{n}(\ln k_{1}, \tau)=K_2-K_1, & \tau \in[0, T], \\[0.3cm]
        Y_{n}(x,0)=\psi_1(x), & x \in[-n, \ln k_{1}],
        \end{array}\right.
    \end{equation}
    where
    \begin{equation} \label{def_fn}
        \begin{aligned}
        f_n(x,\tau)&=-\chi_{\left\{Y_{n}=\psi_1\right\}}(x,\tau)\,\mathcal{L}\psi_1(x) \\[0.2cm]
        &=\chi_{\left\{Y_{n}=\psi_1\right\}}(x,\tau)\left[\chi_{\{x<\ln K_1\}}\,r(K_{2}-k_{1})+\chi_{\{x>\ln K_1\}}\left(qe^{x} - r(K_{1}+k_1-K_{2})\right)\right]
        \end{aligned}
    \end{equation}
    almost everywhere in $Q_n$, and $ \chi_{A} $ denotes the characteristic function of the set $ A $. In particular the inequality in the second line of \eqref{condor_spread_Yn} becomes the equality $\partial_\tau Y_n-\mathcal{L}Y_n=-\mathcal{L}\psi_1$ on the coincidence set. Then we see that
    \begin{equation*}
         |f_n(x, \tau)| \leq qk_{1} + r(K_{2}-K_{1}+k_{1}) \quad \text { for almost all } (x, \tau) \in Q_n,
    \end{equation*}
    and this bound is independent of $ n $. The data of \eqref{Yn_linear} are independent of $n$ on $[-R,\ln k_1]$, and $\psi_1$ is smooth on $[-R,\ln k_1]\setminus(\ln K_1-\rho,\ln K_1+\rho)$. Therefore the local $W^{2,1}_p$ estimates, see \cite[Chapter IV, Section 10]{ladyzhenskaia1988linear} and \cite[Theorem 7.22]{lieberman1996second}, and \eqref{lemma_Yn_prop1} give
    \begin{equation} \label{Yn_uniform_estimate}
        \left\|Y_{n}\right\|_{W_{p}^{2,1}\left(((-R, \ln k_{1}) \times (0, T]) \setminus B_{\rho}(\ln K_{1},0) \right)}
        \leq C(R,\rho,p)
    \end{equation}
    for some constant $ C(R,\rho,p) $ independent of $ n $. As in the proof of Lemma \ref{lemma_Yn}, the H\"{o}lder estimate up to the boundary gives a bound for $Y_n$ in $C^{\alpha,\alpha/2}([-R,\ln k_1]\times[0,T])$ independent of $n$. Letting $ n  \to \infty $ along a subsequence and then $R\to\infty$ by a diagonal argument, we obtain a function $Y\in\mathcal{W}_p((-\infty,\ln k_1)\times(0,T])$ with $Y_n\rightharpoonup Y$ weakly in $W^{2,1}_p$ on each set $((-R, \ln k_{1}) \times (0, T]) \setminus B_{\rho}(\ln K_{1},0)$ and $Y_n\to Y$ locally uniformly on $(-\infty,\ln k_1]\times[0,T]$. Exactly as at the end of the proof of Lemma \ref{lemma_Yn}, $Y$ is a strong solution of \eqref{condor_spread_left}, and \eqref{condor_spread_prop1} follows from \eqref{lemma_Yn_prop1}.

    We now prove \eqref{condor_spread_prop2} for the solution $Y$ by comparison. Let $\delta\in(0,T)$ and $w(x,\tau):=Y(x,\tau+\delta)$ on $(-\infty,\ln k_1)\times(0,T-\delta]$. Then $w\geq\psi_1$, $\partial_\tau w-\mathcal{L}w\geq0$ almost everywhere and $w(\ln k_1,\tau)=K_2-K_1$, so $w$ is a supersolution of \eqref{condor_spread_left} on $(-\infty,\ln k_1)\times(0,T-\delta]$, and Lemma \ref{lem_comparison} (i) gives $Y(x,\tau+\delta)\geq Y(x,\tau)$. Hence $\partial_\tau Y\geq0$.
    Next let $\delta>0$ and $w(x,\tau):=Y(x-\delta,\tau)$ on $(-\infty,\ln k_1)\times(0,T]$. Since $\psi_1$ is nondecreasing, on the set $\{w>Y\}$ we have $w>Y\geq\psi_1(x)\geq\psi_1(x-\delta)$, so $\partial_\tau w-\mathcal{L}w=0$ almost everywhere there, while $\partial_\tau Y-\mathcal{L}Y\geq0$. Moreover $w(x,0)=\psi_1(x-\delta)\leq\psi_1(x)$ and $w(\ln k_1,\tau)\leq K_2-K_1=Y(\ln k_1,\tau)$ by \eqref{condor_spread_prop1}. Lemma \ref{lem_max_principle} applied to $w-Y$ gives $Y(x-\delta,\tau)\leq Y(x,\tau)$, that is, $\partial_xY\geq0$.
    Finally let $W(x,\tau):=Y(x,\tau)-e^x$ and $\widetilde{W}(x,\tau):=Y(x-\delta,\tau)-e^{x-\delta}$. Since $\psi_1(x)-e^x=K_2-k_1-\min\{e^x,K_1\}$ is nonincreasing in $x$, we have
    \begin{equation*}
        W-\widetilde{W}=\left[Y(x,\tau)-\psi_1(x)\right]-\left[Y(x-\delta,\tau)-\psi_1(x-\delta)\right]+\left[\psi_1(x)-e^x\right]-\left[\psi_1(x-\delta)-e^{x-\delta}\right],
    \end{equation*}
    where the last difference is nonpositive. Hence on the set $\{W>\widetilde{W}\}$ we have $Y(x,\tau)-\psi_1(x)>Y(x-\delta,\tau)-\psi_1(x-\delta)\geq0$, so that $\partial_\tau Y-\mathcal{L}Y=0$ almost everywhere there, and using $\mathcal{L}e^x=-qe^x$ we obtain
    \begin{equation*}
        (\partial_\tau-\mathcal{L})(W-\widetilde{W})=-qe^x-\left[(\partial_\tau-\mathcal{L})Y(x-\delta,\tau)-qe^{x-\delta}\right]\leq -q\left(e^x-e^{x-\delta}\right)\leq0
    \end{equation*}
    almost everywhere on $\{W>\widetilde{W}\}$. At $\tau=0$ we have $W-\widetilde{W}=\left[\psi_1(x)-e^x\right]-\left[\psi_1(x-\delta)-e^{x-\delta}\right]\leq0$, and at $x=\ln k_1$ we have, by \eqref{condor_spread_prop1},
    \begin{equation*}
        \widetilde{W}(\ln k_1,\tau)\geq\psi_1(\ln k_1-\delta)-e^{\ln k_1-\delta}=K_2-k_1-\min\{e^{\ln k_1-\delta},K_1\}\geq K_2-k_1-K_1=W(\ln k_1,\tau).
    \end{equation*}
    Since $W-\widetilde{W}$ is bounded, Lemma \ref{lem_max_principle} gives $W\leq\widetilde{W}$, that is, $Y(x,\tau)-Y(x-\delta,\tau)\leq e^x-e^{x-\delta}$. Dividing by $\delta$ and letting $\delta\searrow0$ yields $\partial_xY\leq e^x$.
\end{proof}

Similar to the problem \eqref{condor_spread_left}, we obtain the existence and uniqueness for the problem \eqref{condor_spread_right}. Here the truncation is at $x=n$ with the boundary value $\psi_2(n)=k_2-K_1$, and $n>\ln(2K_2)$.

\begin{theorem} \label{thm2.4}
    There exists a unique bounded strong solution $Y\in\mathcal{W}_p\left((\ln k_2,\infty)\times(0,T]\right)$ of the problem \eqref{condor_spread_right}. Furthermore, we have
    \begin{equation} \label{condor_spread_prop3}
        \psi_2(x) \leq Y(x,\tau) \leq K_2-K_1, \qquad
        \partial_{\tau}Y \geq 0, \qquad -e^x \leq \partial_x Y \leq 0
    \end{equation}
    on $[\ln k_2,\infty)\times[0,T]$, the last two almost everywhere.
\end{theorem}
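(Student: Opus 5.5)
The plan mirrors the proof of Theorem \ref{theorem_exist}, with the truncated domain now $(\ln k_2,n)$ for $n>\ln(2K_2)$. Uniqueness among bounded strong solutions follows directly from Lemma \ref{lem_comparison}.

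\textbf{Truncated problem.} For existence, I first solve the truncated problem on $Q_n':=(\ln k_2,n)\times(0,T]$ with boundary values $K_2-K_1$ at $x=\ln k_2$ and $k_2-K_1$ at $x=n$. I use the penalty scheme of Lemma \ref{lemma_Yn} with smoothed obstacle $\psi_{2,\varepsilon}(x):=\varphi_\varepsilon(K_2-e^x)+k_2-K_1$. The parameter $\varepsilon$ is small enough that $\psi_{2,\varepsilon}$ equals $K_2-K_1$ at $\ln k_2$ and $k_2-K_1$ at $n$, using $e^n>2K_2$.

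\textbf{Subsolution check.} To show $\psi_{2,\varepsilon}$ is a subsolution, I compute $(\partial_\tau-\mathcal L)\psi_{2,\varepsilon}$. The term $-\frac{\sigma^2}{2}\varphi''_\varepsilon e^{2x}$ is nonpositive, since $\varphi_\varepsilon(K_2-e^x)$ has second $x$-derivative $\varphi''_\varepsilon e^{2x}-\varphi'_\varepsilon e^x$ and the $x$-derivative terms combine accordingly. The remaining terms are bounded by a constant of the form $(r+q)K_2+r(1+K_2)$. With $\beta_\varepsilon(0)=-C_0'$ for this constant, the argument goes through verbatim. Schaefer's theorem, the bounds $\psi_{2,\varepsilon}\le Y_{n,\varepsilon}\le K_2-K_1+C_0'\tau$, and the Hölder and local $W^{2,1}_p$ estimates away from $(\ln K_2,0)$ then give $Y_n$. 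Unlike the left problem, the domain here has no positive lower bound on $e^{-x}$, but all estimates are on bounded $x$-intervals, so the coefficient structure is unchanged.

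\textbf{Passage to the limit.} I rewrite the problem as a linear equation with right-hand side
\[
f_n=-\chi_{\{Y_n=\psi_2\}}\mathcal L\psi_2,
\]
which equals $r(k_2-K_1)$ for $x>\ln K_2$ and $rK_2-qe^x+r(k_2-K_1)-\ldots$ for $x<\ln K_2$. This is bounded by a constant independent of $n$, because $e^x\le K_2$ there. Local estimates on $(\ln k_2,R)$ and a diagonal argument give $Y$. The two-sided bound follows from $Y_n\ge\psi_2$ and from the constant $K_2-K_1$ being a supersolution. I expect the only genuinely delicate point to be the $n$-uniform bound near $x\to\infty$, where $e^x$ is unbounded. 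This is resolved by noting that $\mathcal L\psi_2$ is constant for $x>\ln K_2$.

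\textbf{Monotonicity.} Time monotonicity follows from the shift $Y(x,\tau+\delta)$, which is a supersolution by Lemma \ref{lem_comparison}(i). For $\partial_xY\le0$, I compare $w=Y(x+\delta,\tau)$ with $Y$. Since $\psi_2$ is nonincreasing, on $\{w>Y\}$ we have $w>\psi_2(x+\delta)$, and the boundary inequality at $\ln k_2$ holds because $Y\le K_2-K_1$. For $\partial_xY\ge-e^x$, I set $W=Y+e^x$ and $\widetilde W=Y(x+\delta,\tau)+e^{x+\delta}$, and use that $\psi_2+e^x=\max\{K_2,e^x\}+k_2-K_1$ is nondecreasing. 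On $\{W>\widetilde W\}$, $Y$ lies strictly above the obstacle and
\[
(\partial_\tau-\mathcal L)(W-\widetilde W)\le q(e^x-e^{x+\delta})\le0.
\]
At $\ln k_2$ we have $\widetilde W\ge\psi_2(\ln k_2+\delta)+e^{\ln k_2+\delta}\ge K_2+k_2-K_1=W$. Lemma \ref{lem_max_principle} then gives $W\le\widetilde W$, and letting $\delta\searrow0$ yields the bound.
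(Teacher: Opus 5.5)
Your proposal is correct and takes the same route as the paper, which simply states that the right-hand problem is handled like the left one, with truncation at $x=n>\ln(2K_2)$ and boundary value $\psi_2(n)=k_2-K_1$. Your penalty construction, linear reformulation with an $n$-uniform right-hand side, diagonal limit, and shift comparisons (using that $\psi_2$ is nonincreasing and $\psi_2+e^x$ is nondecreasing) faithfully mirror Lemma \ref{lemma_Yn} and Theorem \ref{theorem_exist}. Two details are left implicit: the initial inequality $W(\cdot,0)\le\widetilde W(\cdot,0)$, which is exactly the monotonicity of $\psi_2+e^x$, and the fact that $W-\widetilde W$ is now only bounded above rather than bounded, which is all Lemma \ref{lem_max_principle} requires.
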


Combining the two half-line constructions gives a global price with the regularity appropriate to the payoff.
\begin{theorem} \label{thm_full_existence}
    The problem \eqref{condor_spread_Y} has a unique piecewise strong solution $Y$. Its two half-line restrictions belong to $\mathcal W_p$ for every $1<p<\infty$. It satisfies $Y_0\leq Y\leq K_2-K_1$ and equals $K_2-K_1$ on $[\ln k_1,\ln k_2]\times[0,T]$. Moreover $(\partial_\tau-\mathcal L)Y\geq0$ in the sense of distributions on $\mathbb R\times(0,T)$, with equality in the continuation region. Consequently $V(s,t):=Y(\ln s,T-t)$ is the value function \eqref{optimal_stopping}.
\end{theorem}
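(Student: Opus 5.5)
We take $Y_-$ and $Y_+$, the unique bounded strong solutions of \eqref{condor_spread_left} and \eqref{condor_spread_right} from Theorems \ref{theorem_exist} and \ref{thm2.4}, and define $Y:=Y_-$ on $(-\infty,\ell_1]\times[0,T]$, $Y:=K_2-K_1$ on $[\ell_1,\ell_2]\times[0,T]$ and $Y:=Y_+$ on $[\ell_2,\infty)\times[0,T]$.

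\emph{Piecewise strong solution and regularity.} Both half-line solutions are continuous up to their finite endpoints and take the value $K_2-K_1$ there, so $Y\in C(\mathbb R\times[0,T])$. Since $p$ was arbitrary in Section \ref{sec3}, each restriction lies in $\mathcal W_p$ for every $1<p<\infty$. The bounds $Y_0\le Y\le K_2-K_1$ follow from \eqref{condor_spread_prop1} and \eqref{condor_spread_prop3}, using $\psi_1=Y_0$ on $I_-$, $\psi_2=Y_0$ on $I_+$, and $Y_0=K_2-K_1$ on the plateau. Thus $Y$ is a piecewise strong solution. For uniqueness, any other piecewise strong solution restricts to bounded strong solutions of \eqref{condor_spread_left} and \eqref{condor_spread_right}, which coincide with $Y_\pm$ by Lemma \ref{lem_comparison}, and it equals $K_2-K_1$ on the plateau by definition.

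\emph{Distributional inequality.} This is the main step, because $Y$ is not $W^{2,1}_p$ across $\ell_1$ and $\ell_2$. Take a test function $\phi\ge0$ in $C_c^\infty(\mathbb R\times(0,T))$ and split $\int Y(-\partial_\tau\phi-\mathcal L^*\phi)$ over the three spatial intervals.
\begin{itemize}
\item On the plateau, $(\partial_\tau-\mathcal L)Y=r(K_2-K_1)\ge0$.
\item On $I_\pm$, integrate by parts in $x$ using the $C^1$-in-$x$ regularity for $p>3$. The boundary terms involving $Y$ and $\partial_x\phi$ cancel across $\ell_1$ and $\ell_2$ by continuity of $Y$.
\end{itemize}
What remains is the jump of $\partial_xY$. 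At $\ell_1$ the contribution is $\tfrac{\sigma^2}{2}\int\phi(\ell_1,\tau)\,[\partial_xY(\ell_1^+)-\partial_xY(\ell_1^-)]\,d\tau$, where $\partial_xY(\ell_1^+)=0$ on the plateau side. Since $\partial_xY_-\ge0$ by \eqref{condor_spread_prop2}, the jump is $-\partial_xY_-(\ell_1^-)\le0$, which enters with the correct sign. Concretely, $-\tfrac{\sigma^2}{2}\partial_{xx}$ of a downward kink is a nonnegative measure. Symmetrically, at $\ell_2$ we have $\partial_xY_+\le0$, so the jump $\partial_xY_+(\ell_2^+)-0\le0$ again gives a nonnegative measure.

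Two points need care. First, one-sided traces of $\partial_xY_\pm$ at the endpoints must exist. This follows from $p>3$, which gives $C^1$ in $x$ up to the finite boundary away from the initial corner, since $\ln K_1\neq\ell_1$. Second, the pointwise signs $\partial_xY_-\ge0$ and $\partial_xY_+\le0$ must pass to these traces. This follows from monotonicity in $x$, because $Y_-\le K_2-K_1=Y_-(\ell_1)$.

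\emph{Equality in the continuation region.} The continuation region $\{Y>Y_0\}$ avoids the plateau and is open. On it, $(\partial_\tau-\mathcal L)Y=0$ a.e. inside $I_\pm$, and the test functions supported there never see the points $\ell_1,\ell_2$, so equality holds in the distributional sense.

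\emph{Identification with the value function.} Finally, Theorem \ref{thm_verification}, applied with any $p>3$, shows that $V(s,t)=Y(\ln s,T-t)$ is the value function \eqref{optimal_stopping}.
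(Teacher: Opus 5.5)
Your proposal is correct and follows the paper's proof essentially step for step. You glue $Y_\pm$ to the constant plateau, obtain uniqueness from Lemma \ref{lem_comparison} on each half line, integrate by parts over the three intervals so that only nonnegative Dirac terms at $\ell_1,\ell_2$ remain (by $\partial_xY_-\geq0$ and $\partial_xY_+\leq0$), and conclude with Theorem \ref{thm_verification}. There are two small slips, neither affecting the argument. First, your displayed kink term $\frac{\sigma^2}{2}\int\phi\,[\partial_xY(\ell_1^+)-\partial_xY(\ell_1^-)]\,d\tau$ is the contribution to $\int Y\mathcal L^*\phi$, so it enters $\int Y(-\partial_\tau\phi-\mathcal L^*\phi)$ with a minus sign; your remark about the downward kink has the right sign. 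Second, membership in every $\mathcal W_p$ needs the uniqueness you prove, to identify the solutions constructed for different $p$, as the paper notes.
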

\begin{proof}
    Let $Y^-$ and $Y^+$ be the bounded strong solutions given by Theorems \ref{theorem_exist} and \ref{thm2.4}. Define $Y=Y^-$ for $x<\ln k_1$, $Y=K_2-K_1$ for $\ln k_1\leq x\leq\ln k_2$, and $Y=Y^+$ for $x>\ln k_2$. The boundary values make this function continuous. It is a piecewise strong solution and satisfies the stated bounds. Uniqueness follows by applying Lemma \ref{lem_comparison} separately on the two half lines. The constructions for different $p$ give the same function by this uniqueness, so both restrictions belong to every stated $\mathcal W_p$ class.

    To verify the distributional assertion, write $\ell_i=\ln k_i$ and take $p>3$. The one-sided spatial derivatives exist for positive times, and the derivative estimates \eqref{condor_spread_prop2} and \eqref{condor_spread_prop3} give
    \begin{align*}
        J_1(\tau)&:=Y_x(\ell_1+,\tau)-Y_x(\ell_1-,\tau)=-Y_x^-(\ell_1,\tau)\leq0,\\
        J_2(\tau)&:=Y_x(\ell_2+,\tau)-Y_x(\ell_2-,\tau)=Y_x^+(\ell_2,\tau)\leq0.
    \end{align*}
    Integrating by parts on the three intervals shows that the singular part of $(\partial_\tau-\mathcal L)Y$ is
    \begin{equation*}
        -\frac{\sigma^2}{2}\bigl(J_1(\tau)\delta_{\ell_1}+J_2(\tau)\delta_{\ell_2}\bigr),
    \end{equation*}
    a nonnegative measure, where $\delta_{\ell_i}$ is the Dirac measure in the spatial variable. The regular part is nonnegative on the two half lines and equals $r(K_2-K_1)>0$ on the plateau. All singular terms are supported on the contact set, and the equation holds on the continuation set. This also explains why global $W^{2,1}_p$ regularity across the inner strikes is not asserted. Finally apply Theorem \ref{thm_verification}.
\end{proof}

We shall also need the uncapped version of \eqref{condor_spread_left}, in which the obstacle $\psi_1$ is considered on the whole real line without the condition at $x=\ln k_1$. This is the problem of an American call option with strike $K_1$ plus the cash amount $K_2-k_1$, namely
\begin{equation} \label{uncapped_left}
   \begin{cases}
    \; \partial_{\tau} Y-\mathcal{L} Y = 0, \quad
    \text{ if } Y > \psi_1(x),
    & (x, \tau) \in \mathbb{R} \times (0,T] ,\\[0.3cm]
    \; \partial_{\tau} Y-\mathcal{L} Y \geq 0, \quad \text{ if } Y =\psi_1(x), & (x,\tau) \in \mathbb{R} \times (0,T] ,\\[0.3cm]
    \; Y(x, 0)=\psi_1(x),& x \in \mathbb{R}.
   \end{cases}
\end{equation}
\begin{proposition} \label{prop_uncapped}
    The problem \eqref{uncapped_left} has a unique strong solution $Y^{\infty}\in\mathcal{W}_p(\mathbb{R}\times(0,T])$ in the class of functions with $|Y^\infty|\leq Ce^{|x|}$. It satisfies
    \begin{equation} \label{uncapped_bounds}
        \psi_1(x) \leq Y^{\infty}(x,\tau) \leq e^x+K_{2}-k_{1}, \qquad \partial_{\tau}Y^{\infty} \geq 0, \qquad 0 \leq \partial_{x}Y^{\infty}\leq e^x,
    \end{equation}
    the last two almost everywhere in $\mathbb{R}\times(0,T]$.
\end{proposition}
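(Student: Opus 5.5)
Uniqueness in the stated growth class follows directly from Lemma \ref{lem_comparison}, since a strong solution is both a supersolution and a subsolution and the comparison there only uses $|Y|\leq Ce^{|x|}$. For existence we repeat the truncation scheme of Lemma \ref{lemma_Yn} and Theorem \ref{theorem_exist}, now truncating on both sides. For $n>\max\{\ln\frac{2}{K_1},\ln(2K_1)\}$ we pose the obstacle problem with obstacle $\psi_1$ on $(-n,n)\times(0,T]$. The boundary data are $K_2-k_1$ at $x=-n$ and $\psi_1(n)=e^n-K_1+K_2-k_1$ at $x=n$, both compatible with the initial datum. The penalty construction carries over verbatim, with $C_0$ replaced by a constant $C_0(n)$ that dominates $qe^{n}+r(e^n+1+K_2)$. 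With this choice $\psi_{1,\varepsilon}$ is a subsolution of the penalized equation, and we obtain a strong solution $Y_n\in\mathcal W_p$ of the truncated problem with $Y_n\geq\psi_1$. The only new feature is the growth of the data.

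The key step is an $n$-independent bound, and we obtain it by comparison with $w(x,\tau):=e^x+K_2-k_1$. We have $w\geq\psi_1$, and $(\partial_\tau-\mathcal L)w=qe^x+r(K_2-k_1)\geq0$. At $x=\pm n$, $w$ dominates the boundary data, so $w$ is a supersolution and Lemma \ref{lem_comparison} (i) gives $\psi_1\leq Y_n\leq e^x+K_2-k_1$ uniformly in $n$. Next we linearize as in \eqref{Yn_linear}. On the coincidence set the right-hand side $f_n=-\chi_{\{Y_n=\psi_1\}}\mathcal L\psi_1$ is bounded by $qe^x+r(K_2+K_1)$, so it is bounded on each $(-R,R)$ independently of $n$. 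The local $W^{2,1}_p$ estimates away from $B_\rho(\ln K_1,0)$ and the Hölder estimates on $[-R,R]\times[0,T]$ then hold uniformly in $n$. A diagonal subsequence yields $Y^\infty\in\mathcal W_p(\mathbb R\times(0,T])$, and the strong-solution property passes to the limit exactly as at the end of the proof of Lemma \ref{lemma_Yn}. The limit inherits the two-sided bound, which gives the first part of \eqref{uncapped_bounds} and the growth $|Y^\infty|\leq Ce^{|x|}$.

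For the derivative bounds we argue as in the second half of the proof of Theorem \ref{theorem_exist}, now on $I=\mathbb R$ with no lateral boundary. For $\partial_\tau Y^\infty\geq0$, the time shift $Y^\infty(\cdot,\cdot+\delta)$ is a supersolution, and Lemma \ref{lem_comparison} (i) applies. For $\partial_xY^\infty\geq0$, the spatial shift $Y^\infty(x-\delta,\tau)$ is handled with Lemma \ref{lem_max_principle}, using that $\psi_1$ is nondecreasing. For $\partial_xY^\infty\leq e^x$, we compare $W=Y^\infty-e^x$ with its shift, using that $\psi_1-e^x$ is nonincreasing and $\mathcal Le^x=-qe^x$. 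No boundary condition at $\ln k_1$ is needed now. Lemma \ref{lem_max_principle} still applies because $W-\widetilde W$ is bounded above by $Ce^{|x|}$: it equals $Y^\infty(x,\tau)-Y^\infty(x-\delta,\tau)-(e^x-e^{x-\delta})\leq e^x+K_2-k_1$.

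The main obstacle is the unbounded growth at $+\infty$. We must check that every comparison is carried out within the class $Ce^{|x|}$ that Lemma \ref{lem_max_principle} allows, and that the truncated penalty constants, which depend on $n$, do not contaminate the final estimates. They do not, because the final estimates use only the linearized form with locally bounded $f_n$ and the barrier $e^x+K_2-k_1$.
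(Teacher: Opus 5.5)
Your proposal is correct and follows essentially the same route as the paper: a two-sided truncation with an $n$-dependent penalty constant, the $n$-independent barrier $e^x+K_2-k_1$, a linearization with a locally bounded right-hand side giving uniform local estimates, a diagonal limit, and shift comparisons for the three derivative bounds. The differences are cosmetic. Your ``verbatim'' glosses over replacing the bounded supersolution of the penalized step by something like $e^x+K_2-k_1+C_0(n)\tau$, which the paper states explicitly and which is harmless for fixed $n$. In the last comparison the paper uses the two-sided bound $-K_1\leq Y^\infty-e^x\leq K_2-k_1$ instead of your one-sided growth bound, and either suffices for Lemma \ref{lem_max_principle}.
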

\begin{proof}
    For all sufficiently large $n$, solve the truncated problem on $(-n,n)\times(0,T]$ with boundary values $\psi_1(\pm n)$ by the penalty construction of Lemma \ref{lemma_Yn}. In that construction replace $C_0$ by
    \begin{equation*}
        C_0(n):=(r+q)e^n+r(1+K_2),
    \end{equation*}
    and choose the smoothing parameter small enough that $\psi_{1,\varepsilon}=\psi_1$ at both endpoints. The calculation following \eqref{prop1_varphi} then makes $\psi_{1,\varepsilon}$ a subsolution on this fixed interval. The functions $e^x+K_2-k_1$ and $e^x+K_2-k_1+C_0(n)\tau$ replace the bounded supersolutions used there. Indeed,
    \begin{equation*}
        (\partial_\tau-\mathcal L)(e^x+K_2-k_1)
        =qe^x+r(K_2-k_1)\geq0,
    \end{equation*}
    and the additional term $C_0(n)\tau$ dominates the penalty, whose argument is nonnegative above $\psi_{1,\varepsilon}$.

    First let $\varepsilon\searrow0$ with $n$ fixed. This gives a strong solution $Y_n^\infty$ of the truncated obstacle problem. Comparison with the unpenalized supersolution $e^x+K_2-k_1$ gives
    \begin{equation*}
        \psi_1\leq Y_n^\infty\leq e^x+K_2-k_1.
    \end{equation*}
    The constants $C_0(n)$ need not be uniform in $n$ and are not used in the subsequent passage to the whole line. Instead, as in \eqref{def_fn},
    \begin{equation*}
        (\partial_\tau-\mathcal L)Y_n^\infty
        =-\mathbf1_{\{Y_n^\infty=\psi_1\}}\mathcal L\psi_1
        \quad\hbox{almost everywhere}.
    \end{equation*}
    Writing $D:=K_2-k_1$ and $c:=K_1+k_1-K_2$, the right-hand side is bounded in absolute value on $[-R,R]\times(0,T]$ by
    \begin{equation*}
        rD+qe^R+r|c|,
    \end{equation*}
    independently of $n>R+1$. The local H\"older and $W^{2,1}_p$ estimates used in Theorem \ref{theorem_exist}, with a neighborhood of $(\ln K_1,0)$ removed for the latter estimates, are therefore uniform in $n$ on each fixed bounded set. A diagonal limit gives a strong solution $Y^\infty\in\mathcal W_p(\mathbb R\times(0,T])$ with the asserted value bounds. On compact subsets where the limit is strictly above $\psi_1$, local uniform convergence makes the approximating equations homogeneous for all sufficiently large $n$, which verifies the equation in continuation. Uniqueness follows from Lemma \ref{lem_comparison}.

    Comparing $Y^\infty(x,\tau+\delta)$ with $Y^\infty(x,\tau)$ proves $\partial_\tau Y^\infty\geq0$. Comparing $Y^\infty(x+\delta,\tau)$ with $Y^\infty(x,\tau)$, whose obstacles are ordered, proves $\partial_xY^\infty\geq0$. For the upper derivative bound set $W:=Y^\infty-e^x$ and $W_\delta(x,\tau):=W(x+\delta,\tau)$, where $\delta>0$. The obstacle $\psi_1-e^x$ is nonincreasing. Thus on $\{W_\delta>W\}$ the shifted solution is in continuation and
    \begin{equation*}
        (\partial_\tau-\mathcal L)(W_\delta-W)
        \leq-q(e^{x+\delta}-e^x)\leq0.
    \end{equation*}
    The difference has nonpositive initial data and is bounded, because $-K_1\leq W\leq K_2-k_1$. Lemma \ref{lem_max_principle} gives $W_\delta\leq W$. Letting $\delta\searrow0$ proves $\partial_xY^\infty\leq e^x$.
\end{proof}

\section{Properties of the free boundaries} \label{sec4}
    In this section $Y$ denotes the solution of \eqref{condor_spread_left} given by Theorem \ref{theorem_exist}, unless the problem \eqref{condor_spread_right} is explicitly mentioned. By Theorem \ref{thm_full_existence}, $Y$ is the restriction of the value function of the condor spread, so the free boundaries studied here are those of the original problem. The exercise region $\mathcal{E}$ and the continuation region $\mathcal{C}$ are those of Definition \ref{def_solution} with $\psi=\psi_1$ and $Q=(-\infty,\ln k_1)\times(0,T]$. By \eqref{condor_spread_prop2}, for each $\tau\in(0,T]$ the function $x\mapsto Y(x,\tau)$ is nondecreasing and the function $x\mapsto Y(x,\tau)-\psi_1(x)$ is nonincreasing on $(\ln K_1,\ln k_1)$. Hence the exercise region consists of a part below $\ln K_1$, where the holder collects the cash amount $K_2-k_1$, and a part above $\ln K_1$, where the call is exercised. We first show that the two parts are separated.

\begin{lemma} \label{lemma_K_1}
        For any $ \tau > 0 $, $ Y(\ln K_1,\tau) > K_2-k_1 $.
    \end{lemma}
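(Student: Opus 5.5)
We argue by contradiction: suppose that for some $\tau_0\in(0,T]$ we have $Y(\ln K_1,\tau_0)=K_2-k_1$, i.e.\ $(\ln K_1,\tau_0)\in\mathcal{E}$. By \eqref{condor_spread_prop2}, $x\mapsto Y(x,\tau_0)$ is nondecreasing and $Y\geq\psi_1\geq K_2-k_1$. Hence $Y(x,\tau_0)=K_2-k_1$ for all $x\leq\ln K_1$. Since $\partial_\tau Y\geq0$ and $Y\geq\psi_1$, the same equality holds for all $\tau\in(0,\tau_0]$. Thus the whole rectangle $(-\infty,\ln K_1]\times(0,\tau_0]$ lies in the exercise region.

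\textbf{Step 1: a lower bound on the right of $\ln K_1$.} By \eqref{condor_spread_prop2}, the function $x\mapsto Y(x,\tau)-\psi_1(x)$ is nonincreasing on $(\ln K_1,\ln k_1)$. At $x=\ln K_1$ this function equals $0$, so $Y=\psi_1$ on $[\ln K_1,\ln k_1)\times(0,\tau_0]$ as well. Combining with the previous paragraph, $Y=\psi_1$ on $(-\infty,\ln k_1)\times(0,\tau_0]$. This contradicts continuity at $x=\ln k_1$ only if $\psi_1(\ln k_1)\neq K_2-K_1$, which does not happen. So we need a genuine PDE contradiction.

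\textbf{Step 2: the PDE contradiction.} On the open set $(\ln K_1-\eta,\ln K_1+\eta)\times(0,\tau_0)$ we would have $Y=\psi_1$, with $Y\in W^{2,1}_{p,\rm loc}$ and $p>3$, so $\partial_xY$ is continuous there. But $\partial_x\psi_1$ jumps from $0$ to $K_1$ at $\ln K_1$. This contradicts the $C^1$-in-$x$ regularity of $Y$ for positive times, noted after the definition of $\mathcal{W}_p$, since $(\ln K_1,\tau)$ with $\tau>0$ is away from the excluded corner $(\ln K_1,0)$.

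\textbf{Alternative for Step 2.} The same conclusion also follows from the distributional inequality alone: near $(\ln K_1,\tau)$, $(\partial_\tau-\mathcal L)\psi_1$ contains the singular term $-\frac{\sigma^2}{2}K_1\delta_{\ln K_1}<0$, contradicting $(\partial_\tau-\mathcal L)Y\geq0$ in the sense of distributions.

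\textbf{Main obstacle.} The key step is Step 2. Step 1 is in fact not needed, since the contradiction comes from the local structure near $(\ln K_1,\tau_0)$: the propagation argument above puts only a left neighbourhood in contact, but the monotonicity of $Y-\psi_1$ also gives the right side. I expect the main care to lie in justifying that contact on a full two-sided neighbourhood of $(\ln K_1,\tau)$ for $\tau<\tau_0$. That contact indeed holds by the two monotonicity properties and $\partial_\tau Y\geq0$, and it then yields the incompatibility of the upward kink of $\psi_1$ with $C^1$ regularity in $x$.
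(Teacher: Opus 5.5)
Your argument is correct and is essentially the paper's: monotonicity of $Y$ in $x$ and in $\tau$ puts $(-\infty,\ln K_1]\times(0,\tau_0]$ in contact, monotonicity of $Y-\psi_1$ on $(\ln K_1,\ln k_1)$ extends contact to the right of $\ln K_1$, and the kink of $\psi_1$ at $\ln K_1$ then contradicts the regularity of $Y$ away from the initial corner. The paper phrases this last step as $\psi_1\notin W^{2,1}_{p,\rm loc}$, while you use continuity of $\partial_x Y$ for $p>3$, which is legitimate since the solution lies in $\mathcal W_p$ for every $p$. One correction: your remark that Step 1 is not needed is misleading, because Step 1 is exactly what supplies the right half of the two-sided contact neighbourhood, even though only a small piece of it is used.
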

    \begin{proof}
        Suppose the assertion of the lemma is false. Then there is a $ \tau^* >0 $ such that  
        \begin{equation*}
            Y(\ln K_1,\tau^*)=K_2-k_1.
        \end{equation*}
        Combining (\ref{condor_spread_prop2}) with $ Y \geq K_2-k_1 $ yields that 
        \begin{equation*}
            Y(x,\tau)=K_2-k_1, \qquad (x,\tau) \in (-\infty,\ln K_1) \times [0,\tau^*].
        \end{equation*}
        From the inequalities (\ref{condor_spread_prop2}) again, we obtain 
        \begin{equation*}
            Y(x,\tau)= e^x-K_1+K_2-k_1, \qquad (x,\tau) \in [\ln K_1,\ln k_1) \times [0,\tau^*].
        \end{equation*}
        Therefore, we have
        \begin{equation*}
            Y(x,\tau)= (e^x-K_1)^{+}+K_2-k_1, \qquad (x,\tau) \in (-\infty,\ln k_1) \times [0,\tau^*],
        \end{equation*}
        but $ (e^x-K_1)^{+}+K_2-k_1 \notin W^{2,1}_{p,\rm{loc}} $, which is a contradiction.  
    \end{proof}

    The following lemma replaces the formal computation of $\partial_\tau Y$ at the initial time, which is not available for a $W^{2,1}_p$ solution, and it will be used repeatedly. Recall that $\mathcal{L}\psi_1=-r(K_2-k_1)$ on $(-\infty,\ln K_1)$ and
    \begin{equation} \label{Lpsi1}
        \mathcal{L}\psi_1(x)=-qe^x+r(K_1+k_1-K_2), \qquad x\in(\ln K_1,\ln k_1).
    \end{equation}
    \begin{lemma} \label{lem_test}
        Let $0\leq\tau_0<\tau_1\leq T$ and let $(x_1,x_2)$ be an interval contained either in $(-\infty,\ln K_1)$ or in $(\ln K_1,\ln k_1)$. If $Y(\cdot,\tau_0)=\psi_1$ on $(x_1,x_2)$ and $Y>\psi_1$ in $(x_1,x_2)\times(\tau_0,\tau_1)$, then $\mathcal{L}\psi_1\geq0$ on $(x_1,x_2)$. In particular, the case $(x_1,x_2)\subset(-\infty,\ln K_1)$ cannot occur. The same statement holds for the solution of \eqref{condor_spread_right} with $\psi_2$ in place of $\psi_1$ and intervals contained in $(\ln k_2,\ln K_2)$ or in $(\ln K_2,\infty)$.
    \end{lemma}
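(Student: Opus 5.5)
We argue by contradiction, using $\partial_\tau Y\geq0$ from \eqref{condor_spread_prop2} and a weak (test function) formulation of the equation on the strip $(x_1,x_2)\times(\tau_0,\tau_1)$. The point is to avoid evaluating $\partial_\tau Y$ at $\tau=\tau_0$. Instead we integrate the equation in time and exploit the initial coincidence $Y(\cdot,\tau_0)=\psi_1$.

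Suppose $\mathcal L\psi_1(\bar x)<0$ for some $\bar x\in(x_1,x_2)$. On the relevant subinterval $\psi_1$ is smooth and $\mathcal L\psi_1$ is continuous (constant $-r(K_2-k_1)$ or given by \eqref{Lpsi1}). Hence there are $\eta>0$ and a subinterval $(a,b)\subset(x_1,x_2)$ with $\mathcal L\psi_1\leq-\eta$ on $[a,b]$.

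Fix a nonnegative $\phi\in C_c^\infty((a,b))$ with $\int\phi=1$, and set
\[
F(\tau):=\int_a^b\bigl(Y(x,\tau)-\psi_1(x)\bigr)\phi(x)\,\mathrm dx,
\]
which is continuous, satisfies $F(\tau_0)=0$ and $F\geq0$. Since $Y>\psi_1$ in the strip, we have $\partial_\tau Y=\mathcal LY$ a.e. there. Integrating by parts in $x$ (valid for a.e. $\tau$ since $Y\in W^{2,1}_{p,\mathrm{loc}}$ away from $\tau=0$), we get for $\tau\in(\tau_0,\tau_1)$
\[
F'(\tau)=\int (Y-\psi_1)\,\mathcal L^*\phi\,\mathrm dx+\int \phi\,\mathcal L\psi_1\,\mathrm dx,
\]
where $\mathcal L^*$ is the formal adjoint. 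If $\tau_0=0$, we interpret this on $(\delta,\tau)$ and let $\delta\searrow0$ using continuity of $Y$ at the initial time. By continuity of $Y$ and $Y(\cdot,\tau_0)=\psi_1$, the first integral tends to $0$ as $\tau\searrow\tau_0$ uniformly on $[a,b]$, while the second is $\leq-\eta$. Hence $F'<-\eta/2$ a.e. on some $(\tau_0,\tau_0+h)$, so $F(\tau_0+h)<0$, contradicting $Y\geq\psi_1$. This proves $\mathcal L\psi_1\geq0$ on $(x_1,x_2)$.

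The main point requiring care is justifying the time-integrated identity up to $\tau_0$: the $W^{2,1}_p$ regularity may fail at the corner $(\ln K_1,0)$, and the strip need not include that time level. Because $\operatorname{supp}\phi$ is compact in an open interval avoiding $\ln K_1$, the function $Y$ lies in $W^{2,1}_p$ on $\operatorname{supp}\phi\times(\tau_0+\delta,\tau)$. Hence $F$ is absolutely continuous there and $F(\tau)-F(\tau_0+\delta)=\int F'$. Letting $\delta\to0$ by uniform continuity gives what we need. If $\tau_0>0$ there is no corner issue at all.

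For the special case: on $(-\infty,\ln K_1)$ we have $\mathcal L\psi_1=-r(K_2-k_1)<0$, so that case cannot occur. For \eqref{condor_spread_right} the identical argument applies with $\psi_2$, which is smooth on $(\ln k_2,\ln K_2)$ and on $(\ln K_2,\infty)$, with $Y$ being the solution from Theorem \ref{thm2.4}.
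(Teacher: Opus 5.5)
Your proposal is correct and follows essentially the same route as the paper: you test $(\partial_\tau-\mathcal L)(Y-\psi_1)=\mathcal L\psi_1$ in the strip against a nonnegative compactly supported function, integrate by parts in $x$, and use $Y(\cdot,\tau_0)=\psi_1$ with the continuity of $Y$ so that the $(Y-\psi_1)\mathcal L^*\phi$ term vanishes as $\tau\searrow\tau_0$, which forces $\int\phi\,\mathcal L\psi_1\geq0$. The differences are cosmetic: you argue by contradiction on a subinterval where $\mathcal L\psi_1\leq-\eta$, you spell out the absolute continuity of $F$ near $\tau_0=0$, and your opening appeal to $\partial_\tau Y\geq0$ is never actually used.
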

    \begin{proof}
        Let $\eta\in C^\infty_c((x_1,x_2))$ be nonnegative and let $\mathcal{L}^{*}\eta:=\frac{\sigma^2}{2}\eta''-(r-q-\frac{\sigma^2}{2})\eta'-r\eta$ be the formal adjoint of $\mathcal{L}$ applied to $\eta$. Since $\psi_1$ is smooth on $(x_1,x_2)$ and $Y\in W^{2,1}_p$ on $\operatorname{supp}\eta\times(\tau_0,\tau_1)$, we have $(\partial_\tau-\mathcal{L})(Y-\psi_1)=\mathcal{L}\psi_1$ almost everywhere in $(x_1,x_2)\times(\tau_0,\tau_1)$, because $Y>\psi_1$ there. Multiplying by $\eta$, integrating over $(x_1,x_2)\times(\tau_0,\tau)$ and integrating by parts in $x$, we obtain for $\tau\in(\tau_0,\tau_1)$
        \begin{equation*}
            \int_{x_1}^{x_2}\left(Y-\psi_1\right)(x,\tau)\eta(x)\,\mathrm{d}x=\int_{\tau_0}^{\tau}\int_{x_1}^{x_2}\left[\left(Y-\psi_1\right)(x,\tau')\mathcal{L}^{*}\eta(x)+\mathcal{L}\psi_1(x)\eta(x)\right]\mathrm{d}x\,\mathrm{d}\tau',
        \end{equation*}
        where we used $Y(\cdot,\tau_0)=\psi_1$ on $(x_1,x_2)$. The left-hand side is nonnegative. Dividing by $\tau-\tau_0$ and letting $\tau\searrow\tau_0$, the first term on the right-hand side tends to zero, because $Y-\psi_1\to0$ uniformly on $\operatorname{supp}\eta$ as $\tau'\searrow\tau_0$ by the continuity of $Y$. Hence $\int_{x_1}^{x_2}\mathcal{L}\psi_1\,\eta\,\mathrm{d}x\geq0$ for every such $\eta$, that is, $\mathcal{L}\psi_1\geq0$ on $(x_1,x_2)$. Since $\mathcal{L}\psi_1=-r(K_2-k_1)<0$ on $(-\infty,\ln K_1)$, the last assertion follows.
    \end{proof}

    The lower part of the exercise region is never empty.
    \begin{lemma} \label{lemma_nonempty}
        There exists $a_0<\ln K_1$, depending only on $r,q,\sigma$ and the strike prices, such that $Y(x,\tau)=K_2-k_1$ for all $x\leq a_0$ and $\tau\in[0,T]$.
    \end{lemma}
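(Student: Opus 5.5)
The plan is to build an explicit supersolution of \eqref{condor_spread_left} that equals the cash level $K_2-k_1$ for $x\leq a_0$. Lemma \ref{lem_comparison} (i) then gives $Y\leq w$, and together with $Y\geq\psi_1=K_2-k_1$ on $(-\infty,\ln K_1)$ this forces $Y=K_2-k_1$ for $x\leq a_0$. Write $D:=K_2-k_1$. The supersolution will be independent of $\tau$, so that $a_0$ does not depend on $T$. We need $w\geq\psi_1$, $w(\ln k_1)\geq K_2-K_1$, and $(\partial_\tau-\mathcal L)w\geq0$ almost everywhere. The constant $D$ itself has $-\mathcal L D=rD>0$, so it is a strict supersolution where it sits above the obstacle. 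The idea is to glue $D$ on the far left to a stationary function that rises to $K_2-K_1$ at $\ln k_1$.

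Concretely, take $w(x)=D+A\bigl((e^{x}-e^{a_0})^+\bigr)^{2}$, or more robustly a convex $C^1$ function that is constant $D$ for $x\leq a_0$. Above that level, use $w(x)=\min\{K_2-K_1,\ D+\phi(x)\}$, with $\phi(x)=A(e^{\gamma(x-a_0)}-1)^{+}$ and $\gamma>0$ the positive root of $\frac{\sigma^2}{2}\gamma^2+(r-q-\frac{\sigma^2}{2})\gamma-r=0$, so that $\mathcal L e^{\gamma x}=0$. Then $(\partial_\tau-\mathcal L)(D+\phi)=rD+rA>0$ on $\{x>a_0\}$.

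At $x=a_0$ the kink of $\phi$ is convex (an upward corner). Its distributional second derivative is therefore a positive Dirac mass, which gives a negative contribution to $-\mathcal L w$. This is bad, so a $C^1$ smoothing is needed. I would instead use the decreasing root to build a function that is smooth at $a_0$: take $\phi$ solving $-\mathcal L(D+\phi)=$ something $\geq0$ with $\phi(a_0)=\phi'(a_0)=0$. The cleanest choice is $\phi$ solving $\frac{\sigma^2}{2}\phi''+(r-q-\frac{\sigma^2}{2})\phi'-r\phi=rD$ on $x>a_0$ with zero Cauchy data. Then $(\partial_\tau-\mathcal L)(D+\phi)=0$ and $w\in C^1$, so there is no singular part at $a_0$. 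This $\phi$ is an explicit combination of $e^{\gamma_\pm(x-a_0)}$ and the constant $-D$, namely $\phi(x)=D\bigl(c_+e^{\gamma_+(x-a_0)}+c_-e^{\gamma_-(x-a_0)}-1\bigr)$ with $c_\pm$ fixed by the Cauchy data. The function $\phi$ is positive, increasing and grows like $e^{\gamma_+(x-a_0)}$.

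Truncating at the level $K_2-K_1$ by a minimum creates a concave corner. Concave corners are harmless for supersolutions, since the minimum of two supersolutions is a supersolution, and the constant $K_2-K_1$ is a supersolution. It remains to ensure that $w\geq\psi_1$ on $(\ln K_1,\ln k_1)$, that is, $D+\phi(x)\geq D+e^{x}-K_1$ wherever $D+\phi<K_2-K_1$. The boundary condition at $\ln k_1$ then follows from the min.

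The main obstacle is choosing $a_0$ so that $\phi(x)\geq e^x-K_1$ on $[\ln K_1,\ln k_1]$. Since $\phi$ is independent of $T$ and increases without bound as $a_0$ moves left relative to $\ln K_1$, I expect $\phi(\ln K_1)\geq k_1-K_1$ for $a_0$ sufficiently negative. That is, the function already exceeds the cap level before the call region starts, and this suffices because $e^{x}-K_1\leq k_1-K_1$ there. Such an $a_0$ depends only on $r,q,\sigma,K_1,k_1,K_2$. Finally, Lemma \ref{lem_comparison} (i) with this bounded $w$ yields $Y\leq w=D$ for $x\leq a_0$, which proves the lemma.
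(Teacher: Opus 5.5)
Your final barrier $K_2-k_1+\phi$, the $\mathcal L$-harmonic function with $C^1$ fit to $K_2-k_1$ at $a_0$, is exactly the paper's $w_{a}$, and choosing $a_0$ far enough left that it exceeds $K_2-K_1$ at $\ln K_1$ is the paper's argument, so the proof is correct and follows the paper. The only deviation, the truncation by $\min\{K_2-K_1,\cdot\}$, is unnecessary and puts a corner at an interior point, which takes $w$ out of the $\mathcal W_p$ class required by Lemma \ref{lem_comparison}; compare instead with the untruncated $K_2-k_1+\phi$, which is already a bounded supersolution on $(-\infty,\ln k_1)$.
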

    \begin{proof}
        Let $n_1<0<n_2$ be the roots of the characteristic equation $\frac{\sigma^{2}}{2} n^{2}+\left(r-q-\frac{\sigma^{2}}{2}\right) n-r=0$, so that $\mathcal{L}e^{n_1x}=\mathcal{L}e^{n_2x}=0$. For $a<\ln K_1$ define
        \begin{equation*}
            w_a(x):=
            \begin{cases}
                K_2-k_1, & x\leq a,\\[0.2cm]
                \displaystyle \frac{K_2-k_1}{n_2-n_1}\left[n_2e^{n_1(x-a)}-n_1e^{n_2(x-a)}\right], & x>a.
            \end{cases}
        \end{equation*}
        Then $w_a(a)=K_2-k_1$, $w_a'(a)=0$, so $w_a\in C^1(\mathbb{R})\cap W^{2}_{p,\rm{loc}}(\mathbb{R})$, and $-\mathcal{L}w_a=r(K_2-k_1)>0$ for $x<a$ while $-\mathcal{L}w_a=0$ for $x>a$. Moreover $w_a'(x)=\frac{(K_2-k_1)n_1n_2}{n_2-n_1}\left[e^{n_1(x-a)}-e^{n_2(x-a)}\right]>0$ for $x>a$, so $w_a\geq K_2-k_1=\psi_1$ on $(-\infty,\ln K_1]$, and
        \begin{equation*}
            w_a(x)-(K_2-k_1)\geq\frac{(K_2-k_1)}{n_2-n_1}\left[-n_1e^{n_2(\ln K_1-a)}-(n_2-n_1)\right]\to\infty \quad \text{ as } a\to-\infty
        \end{equation*}
        uniformly for $x\in[\ln K_1,\ln k_1]$. Hence there is $a_0<\ln K_1$ such that $w_{a_0}\geq\psi_1$ on $(-\infty,\ln k_1]$ and $w_{a_0}(\ln k_1)\geq K_2-K_1$. Regarded as a function of $(x,\tau)$, $w_{a_0}$ is a bounded supersolution of \eqref{condor_spread_left}, and Lemma \ref{lem_comparison} (i) gives $Y\leq w_{a_0}$. Since $Y\geq\psi_1=K_2-k_1=w_{a_0}$ on $(-\infty,a_0]$, the lemma follows.
    \end{proof}

    By Lemmas \ref{lemma_K_1} and \ref{lemma_nonempty} and the monotonicity properties \eqref{condor_spread_prop2}, for each $\tau\in(0,T]$ the set $\{x\leq\ln K_1 : Y(x,\tau)=K_2-k_1\}$ is a nonempty closed interval of the form $(-\infty,A(\tau)]$ with $A(\tau)<\ln K_1$, and the set $\{x\in[\ln K_1,\ln k_1] : Y(x,\tau)=e^x-K_1+K_2-k_1\}$ is a nonempty closed interval of the form $[B(\tau),\ln k_1]$ with $B(\tau)>\ln K_1$, since it contains $\ln k_1$ by the boundary condition in \eqref{condor_spread_left}. We therefore define the two free boundaries
    \begin{align*}
        &A(\tau)=\max \left\{x : x \leq \ln K_1, Y(x,\tau)=K_2-k_1 \right\}, \\[0.3cm]
        &B(\tau)=\min \left\{x : \ln K_1 \leq x \leq \ln k_1, Y(x,\tau)=e^x-K_1+K_2-k_1\right\},
    \end{align*}
    so that
    \begin{align*}
        \mathcal{E}&=\left\{(x,\tau)\in Q : x\leq A(\tau)\right\}\cup\left\{(x,\tau)\in Q : B(\tau)\leq x<\ln k_1\right\},\\
        \mathcal{C}&=\left\{(x,\tau)\in Q : A(\tau)<x<B(\tau)\right\}.
    \end{align*}
    When $B(\tau)=\ln k_1$, the upper part of the exercise region inside $Q$ is empty, and contact remains at the prescribed boundary $x=\ln k_1$. Thus upper exercise occurs only upon reaching the plateau. We first analyze the free boundary $A(\tau)$.

    \begin{theorem} \label{a0}
        $A$ is strictly decreasing and continuous on $[0,T]$. Furthermore, we have 
        \begin{equation*}
            A(0)=\lim _{\tau \searrow 0} A(\tau)=\ln K_1.
        \end{equation*}
    \end{theorem}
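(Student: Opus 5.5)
We argue in the order: monotonicity, strictness, continuity, and finally the limit at $\tau=0$. The definition of $A(0)$ is taken as $\ln K_1$, the natural convention since $Y(\cdot,0)=\psi_1$.

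\emph{Monotonicity.} By \eqref{condor_spread_prop2}, $\partial_\tau Y\geq0$, so if $Y(x,\tau)=K_2-k_1$ then $Y(x,\tau')=K_2-k_1$ for all $\tau'\le\tau$, because $Y\geq\psi_1=K_2-k_1$ there. Hence $A$ is nonincreasing.

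\emph{Strictness.} Suppose $A(\tau_0)=A(\tau_1)=:a$ with $\tau_0<\tau_1$. Then $Y(a,\tau)=K_2-k_1$ for $\tau\in[\tau_0,\tau_1]$ and $Y>\psi_1$ on $(a,A(\tau_0)+\epsilon)\times(\tau_0,\tau_1)$ near the boundary. Since $Y\in W^{2,1}_p$ with $p>3$ and $\partial_x Y$ is continuous, $\partial_xY(a,\tau)=0$ (smooth fit, as $Y-\psi_1\geq0$ attains its minimum along $x=a$). Set $u:=Y-(K_2-k_1)\geq0$ on a small rectangle $R=(a,a+\eta)\times(\tau_0,\tau_1)$. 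There $(\partial_\tau-\mathcal L)u=r(K_2-k_1)>0$... Note, however, that the equation is $(\partial_\tau-\mathcal L)Y=0$, so $(\partial_\tau-\mathcal L)u=r(K_2-k_1)>0$ holds only in the sense $\partial_\tau u-\mathcal L u = -\mathcal L(K_2-k_1)$. This is $u>0$ in $R$, $u=0$ and $\partial_x u=0$ on the lateral side $x=a$, with $\partial_\tau u\geq0$. The Hopf lemma is not directly usable because the sign is wrong, so I would instead use the strong-maximum-principle/Hopf argument for $v:=\partial_\tau Y\geq 0$. This $v$ solves $(\partial_\tau-\mathcal L)v=0$ in the continuation region (interior regularity gives this) and vanishes on $x=a$. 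If $v\equiv0$ in $R$, then $\mathcal L Y=0$ with $Y(a)=K_2-k_1$ and $Y_x(a)=0$, and then $\frac{\sigma^2}{2}Y_{xx}(a+)=r(K_2-k_1)>0$. This is consistent, so a contradiction does not come from there. The cleaner route is to evaluate the equation near $x=a$: $\frac{\sigma^2}{2}u_{xx}=\partial_\tau u-(\ldots)u_x+ru+r(K_2-k_1)\ge r(K_2-k_1)-o(1)$, which only shows convexity. Therefore the contradiction must come from Lemma \ref{lem_test} combined with a time-reversal-free argument: a vertical segment of $A$ means points $(x,\tau)$ with $x$ slightly less than... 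Indeed, I expect the correct mechanism to be the reverse. Constancy on $[\tau_0,\tau_1]$ is not itself contradictory, but in that case the region $\{x<a\}\times(\tau_0,\tau_1)$ is entirely exercise while just to the right it is continuation. I would compare $Y(\cdot,\tau)$ with the time-shift $Y(\cdot,\tau+\delta)$: both solve the same problem on $(a,\ln k_1)$ with identical Dirichlet data $K_2-k_1$ at $x=a$ on the overlapping time interval. The difference $w=Y(\cdot,\cdot+\delta)-Y\geq0$ solves the homogeneous equation in the continuation region, and since $w=\partial_xw=0$ on $x=a$, the Hopf lemma forces $w\equiv0$ locally. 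Unique continuation then makes $Y$ stationary on the component of continuation, and this contradicts Lemma \ref{lemma_K_1}-type behaviour at $\tau$ small. This Hopf step (requiring $C^1$ up to the free boundary, i.e. $p>3$) is the main obstacle, and I would try the Hopf route first.

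\emph{Continuity and the initial limit.} Right-continuity follows from closedness of $\mathcal E$ together with monotonicity. Left-continuity follows by contradiction: a jump $A(\tau_0-)>A(\tau_0)$ yields an interval $(A(\tau_0),A(\tau_0-))$ on which $Y(\cdot,\tau_0)$ becomes positive immediately after being at contact for earlier times. More precisely, in the time-increasing direction the solution is at contact for $\tau<\tau_0$ and in continuation at $\tau_0$, which with $\partial_\tau Y\ge0$ and continuity is impossible. The reverse-type jump is handled by Lemma \ref{lem_test} applied with $\tau_0$ and the interval $(A(\tau_0+),A(\tau_0))\subset(-\infty,\ln K_1)$, whose conclusion $\mathcal L\psi_1\ge0$ contradicts $\mathcal L\psi_1=-r(K_2-k_1)<0$. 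For the limit, $A(0+)\le\ln K_1$ by definition. If $A(0+)=a<\ln K_1$, then $Y(\cdot,0)=\psi_1$ on $(a,\ln K_1)$ and $Y>\psi_1$ there for $\tau>0$, which Lemma \ref{lem_test} again forbids. Hence $A(0+)=\ln K_1=A(0)$.
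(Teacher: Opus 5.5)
Once the abandoned attempts are removed, your argument follows the paper's route. Monotonicity comes from $\partial_\tau Y\geq0$. The initial limit and the exclusion of right jumps use Lemma~\ref{lem_test}, and left jumps are excluded by continuity of $Y$. Strictness comes from applying the Hopf lemma to the time increment $w=Y(\cdot,\cdot+\delta)-Y\geq0$ at a flat piece $A\equiv a$, where $C^1$ fit gives $\partial_x w(a,\cdot)=0$.

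The weak point is the step that actually produces the contradiction. That step is not Hopf but the propagation you attribute to ``unique continuation'', and as written it is not justified. Unique continuation for parabolic equations propagates zeros within time slices; it does not by itself carry a zero down to $\tau\searrow0$. You also do not say along which path in $\mathcal C$ the zero travels, and ``stationary on the whole component'' claims more than you can get. The right tool is the strong maximum principle, which is available because $w\geq0$. Note that $w$ solves the homogeneous equation on all of $\mathcal C$, since continuation expands in time. Hopf shows that $w$ has an interior zero in the rectangle next to $x=a$, at some time where $A=a$. From that zero, go horizontally at the same time to $x=\ln K_1$, then down the segment $\{\ln K_1\}\times(0,\cdot]$, which lies in $\mathcal C$ by Lemma~\ref{lemma_K_1}. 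Since the zero propagates to earlier points along time-monotone paths in $\mathcal C$, you get $Y(\ln K_1,s+\delta)=Y(\ln K_1,s)$ for all small $s>0$. Letting $s\searrow0$ gives $Y(\ln K_1,\delta)=K_2-k_1$, which contradicts Lemma~\ref{lemma_K_1}. The paper runs exactly this propagation for $\partial_\tau Y$ first, proving $\partial_\tau Y>0$ in $\mathcal C$. It then applies Hopf to a strictly positive increment.

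Two smaller corrections. First, closedness of $\mathcal E$ together with monotonicity yields $A(\tau_0-)\leq A(\tau_0)$, which is left-continuity, not right-continuity. Right-continuity comes only from your Lemma~\ref{lem_test} argument on $(A(\tau_0+),A(\tau_0))$ for $\tau_0<T$. Second, strict decrease at $\tau=0$ needs $A(\tau)<\ln K_1$ for $\tau>0$, which is Lemma~\ref{lemma_K_1}.
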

    \begin{proof}
    Since $Y$ is nondecreasing in $\tau$, its continuation intervals expand with time. Thus $A$ is nonincreasing and the limit $A(0):=\lim_{\tau\searrow0}A(\tau)$ exists. It is finite by Lemma \ref{lemma_nonempty}. If $A(0)<\ln K_1$, any interval compactly contained in $(A(0),\ln K_1)$ is in continuation for all positive times and is in contact at time zero. Lemma \ref{lem_test} would give $\mathcal L\psi_1\geq0$ there, contrary to $\mathcal L\psi_1=-r(K_2-k_1)<0$. Hence $A(0)=\ln K_1$.

    We first establish strict time increase of the solution inside continuation, which will also be useful for the other boundaries. Interior parabolic regularity gives a smooth function $Z:=\partial_\tau Y$ in $\mathcal C\cap\{0<\tau<T\}$, and $Z\geq0$ and $(\partial_\tau-\mathcal L)Z=0$ there. We claim that $Z>0$ there. Suppose that $Z(x_0,\tau_0)=0$ at an interior point. The interval $\mathcal C_{\tau_0}$ contains both $x_0$ and $\ln K_1$, by Lemma \ref{lemma_K_1}. Choose a closed spatial interval, contained in $\mathcal C_{\tau_0}$, with both points in its interior. Continuity of $Y-\psi_1$ puts this entire interval in continuation on a strip $[\tau_0-\varepsilon,\tau_0]$, with $0<\varepsilon<\tau_0$. The strong maximum principle in this rectangle gives $Z(\ln K_1,\tau_0-\varepsilon/2)=0$. Set $\tau_1:=\tau_0-\varepsilon/2$. For every $0<s<\tau_1$, the compact vertical segment $\{\ln K_1\}\times[s,\tau_1]$ lies in $\mathcal C$. Covering it by finitely many overlapping interior rectangles and applying the strong maximum principle successively backwards in time yields $Z(\ln K_1,s)=0$. Thus $Y(\ln K_1,\tau)$ is constant on $(0,\tau_1]$. Its initial continuity makes this constant $\psi_1(\ln K_1)$, contradicting Lemma \ref{lemma_K_1}. This proves the claim using only rectangles contained in continuation.

    Suppose now that $A(\tau_1)=A(\tau_2)=a$ for some $0<\tau_1<\tau_2\leq T$. By monotonicity, $A=a$ throughout this interval. Choose $0<h<\tau_2-\tau_1$ and $0<\eta<\ln K_1-a$, and put
    \begin{equation*}
        H_h(x,\tau):=Y(x,\tau+h)-Y(x,\tau)
        \quad\text{on }(a,a+\eta)\times(\tau_1,\tau_2-h).
    \end{equation*}
    Both arguments lie in continuation, so $(\partial_\tau-\mathcal L)H_h=0$, and the strict positivity just proved gives $H_h>0$ in this rectangle. On its left boundary $H_h(a,\tau)=0$. Spatial $C^1$ fit to the constant obstacle at each of the two times gives $\partial_xH_h(a,\tau)=0$. The parabolic Hopf boundary lemma, applied at an interior time of this flat lateral boundary, instead gives $\partial_xH_h(a,\tau)>0$. This contradiction proves strict decrease. No mixed derivative across a moving boundary has been used. The same argument applies to a flat right endpoint of a continuation interval, with the Hopf derivative negative, provided that endpoint lies strictly inside the spatial domain.

    Finally, monotonicity gives both one-sided limits of $A$ at every positive time. Left continuity, including at $T$, follows directly from continuity of $Y$: if $A(\tau_0-) > A(\tau_0)$, a point strictly between these values is in contact just before $\tau_0$ and hence at $\tau_0$, a contradiction. For $\tau_0<T$, a right jump $A(\tau_0+)<A(\tau_0)$ would give an interval compactly contained between these two values that is in contact at $\tau_0$ and in continuation on a right-hand time strip. Lemma \ref{lem_test} again contradicts $\mathcal L\psi_1=-r(K_2-k_1)<0$. Hence $A$ is continuous on $(0,T]$, and the already established initial limit completes the proof.
\end{proof}

    We now turn to the free boundary $B$. Its behavior depends on the sign of \eqref{Lpsi1}. Let
    \begin{equation} \label{def_xi}
        \xi:=\max\left\{\ln K_1,\ \ln \frac{r(K_1+k_1-K_2)}{q}\right\},
    \end{equation}
    where the second term is understood as $-\infty$ if $q=0$ or $K_1+k_1\leq K_2$. Then $\mathcal{L}\psi_1>0$ on $(\ln K_1,\xi)$ and $\mathcal{L}\psi_1<0$ on $(\xi,\infty)$, except in the case $q=0$ and $K_1+k_1\geq K_2$, in which $\mathcal{L}\psi_1=r(K_1+k_1-K_2)\geq0$ on the whole interval $(\ln K_1,\ln k_1)$.
    \begin{theorem} \label{b0}
    The following statements hold.
    \begin{enumerate}[label=(\roman*)]
        \item If $ q=0 $ and $ K_1+k_1 \geq K_2 $, or if $q>0$ and $\xi\geq\ln k_1$, then $B(\tau)=\ln k_1$ for all $\tau\in(0,T]$. In this case the call is never exercised before the price of the underlying asset reaches $k_1$.
        \item If $q=0$ and $K_1+k_1<K_2$, or if $q>0$ and $\xi<\ln k_1$, then $B$ is nondecreasing and continuous on $(0,T]$, it is strictly increasing on $\{\tau\in(0,T] : B(\tau)<\ln k_1\}$, and
        \begin{equation*}
            B(0):=\lim _{\tau \searrow 0} B(\tau)=\xi .
        \end{equation*}
    \end{enumerate}
    \end{theorem}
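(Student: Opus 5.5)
For (i) we show directly that the upper part of the exercise region inside $Q$ is empty. Suppose instead that $B(\tau_0)<\ln k_1$ for some $\tau_0\in(0,T]$. Since $Y$ is nondecreasing in $\tau$, the exercise set shrinks with time. Hence $[B(\tau_0),\ln k_1)$ is in contact on all of $(0,\tau_0]$.

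In the hypotheses of (i) we have $\mathcal L\psi_1\geq0$ on $(\ln K_1,\ln k_1)$, with strict inequality except possibly at isolated points. By \eqref{def_fn}, on a contact interval the function $Y$ satisfies
\begin{equation*}
(\partial_\tau-\mathcal L)Y=-\mathcal L\psi_1\leq 0
\end{equation*}
almost everywhere. The variational inequality, however, forces $(\partial_\tau-\mathcal L)Y\geq0$ a.e. Take an open subinterval where $\mathcal L\psi_1>0$ strictly. There $Y=\psi_1$ on a set of positive measure, which gives the contradiction $-\mathcal L\psi_1\geq0$.

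In the degenerate case $q=0$, $K_1+k_1=K_2$, the quantity $\mathcal L\psi_1$ vanishes identically. The contradiction then has to come from elsewhere. I would argue that $\psi_1$ solves the equation on $(\ln K_1,\ln k_1)$ and compare $Y$ with the continuation structure. Concretely, $Y$ restricted to $[\ln K_1,\ln k_1]$ is caloric with boundary values at least $\psi_1$ and with $Y(\ln K_1,\tau)>\psi_1$ by Lemma \ref{lemma_K_1}. The strong maximum principle applied to $Y-\psi_1\geq0$ on the connected set then gives $Y>\psi_1$ in the interior. So $B=\ln k_1$.

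For (ii), monotonicity of $B$ comes from $\partial_\tau Y\geq0$. Continuity and the initial limit follow the proof of Theorem \ref{a0}:
\begin{itemize}
\item Left continuity follows from continuity of $Y$.
\item A right jump would produce an interval that is in contact at $\tau_0$ and in continuation just after it. Lemma \ref{lem_test} then requires $\mathcal L\psi_1\geq0$ there, so such an interval must lie in $(\ln K_1,\xi]$.
\item Contact on $(\ln K_1,\xi)$ at positive times is impossible by the argument of (i).
\end{itemize}
Together these show $B(\tau)\geq\xi$ for $\tau>0$ and rule out the right jump, since the jumped interval lies above $B(\tau_0)\geq\xi$, where $\mathcal L\psi_1<0$.

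For the initial limit, set $B(0):=\lim_{\tau\searrow0}B(\tau)\geq\xi$. If $B(0)>\xi$, the interval $(\xi,B(0))$ is in continuation for all positive times and in contact at time zero. Lemma \ref{lem_test} then contradicts $\mathcal L\psi_1<0$ there. Hence $B(0)=\xi$.

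Strict increase where $B<\ln k_1$ uses the Hopf argument of Theorem \ref{a0}, now at a flat right endpoint strictly inside the domain. This is the remark stated at the end of that proof, using $Z=\partial_\tau Y>0$ in $\mathcal C$.

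The main obstacle I expect is the degenerate case of (i) together with the strict-increase step. For strict increase one must check that the $C^1$ fit at $x=B$ holds from both times and that the rectangle lies inside $\mathcal C$ up to the moving boundary. I would handle this exactly as for $A$, with $H_h=Y(\cdot,\tau+h)-Y(\cdot,\tau)$ on $(b-\eta,b)\times(\tau_1,\tau_2-h)$.
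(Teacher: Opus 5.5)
Your proposal is correct. For the nondegenerate cases of (i) and for all of (ii) it is the paper's proof: the contact-strip inequality $\mathcal L\psi_1\le0$ on $[B(\tau_0),\ln k_1)$, the bound $B\ge\xi$, Lemma \ref{lem_test} for the initial limit and for right jumps, continuity of $Y$ for left jumps, and the time-increment/Hopf argument for strict increase.

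The genuine difference is the degenerate case $q=0$, $K_1+k_1=K_2$ of (i).

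\emph{The paper's route} is probabilistic. It fixes $s_0\in(K_1,k_1)$ and considers the strategy that waits until $S$ reaches $k_1$ or expiry. Combining the verification theorem with optional sampling of the discounted stock, it obtains
\begin{equation*}
V(s_0,t)\ge s_0+\mathbb E\bigl[e^{-r(T-t)}(K_1-S_T)^+\mathbf 1_{\{\theta=T\}}\bigr]>s_0 .
\end{equation*}

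\emph{Your route} stays within the PDE framework. Because $\mathcal L\psi_1\equiv0$ on $(\ln K_1,\ln k_1)$, the identity $(\partial_\tau-\mathcal L)Y=-\chi_{\{Y=\psi_1\}}\mathcal L\psi_1$ from \eqref{def_fn} shows that $Y$ solves the homogeneous equation almost everywhere on the whole strip, including the contact set. Interior regularity then makes $Y-\psi_1\ge0$ a smooth solution there. An interior zero would propagate backward in time over the entire rectangle by the strong minimum principle, and continuity of $Y$ at $x=\ln K_1$ would then contradict Lemma \ref{lemma_K_1}.

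You should state the step ``$Y$ is caloric on the strip'' explicitly. It is not automatic on the contact set; it is exactly where $\mathcal L\psi_1=0$ enters.

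Your argument is shorter and does not depend on Theorem \ref{thm_verification} or on identifying $Y$ with the stopping value. The paper's argument gives a transparent financial reason for never exercising below the cap, namely the embedded put earned by waiting. The cost is that it relies on the verification machinery.
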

    \begin{proof}
    Time monotonicity of $Y$ makes its exercise sets shrink, so $B$ is nondecreasing and $B(0):=\lim_{\tau\searrow0}B(\tau)$ exists. If $B(\tau_0)<\ln k_1$, then $[B(\tau_0),\ln k_1)\times(0,\tau_0]$ lies in the contact set. The strong obstacle inequalities and the derivatives of $Y=\psi_1$ almost everywhere on this set imply
    \begin{equation} \label{b0_observation}
        \mathcal L\psi_1\leq0\qquad\text{on }[B(\tau_0),\ln k_1).
    \end{equation}
    If $q=0$ and $K_1+k_1>K_2$, or if $q>0$ and $\xi\geq\ln k_1$, the expression $\mathcal L\psi_1=-qe^x+r(K_1+k_1-K_2)$ is strictly positive throughout $(\ln K_1,\ln k_1)$, so \eqref{b0_observation} is impossible. Thus $B\equiv\ln k_1$ in these cases.

    In the remaining case of (i), $q=0$ and $K_1+k_1=K_2$, the payoff below the cap is $\max\{s,K_1\}$ and its value at the cap is $k_1$. Fix $s_0\in(K_1,k_1)$ and $t<T$, and set $\theta:=\inf\{u\geq t:S_u\geq k_1\}\wedge T$, with $S_t=s_0$. The verification theorem and optional sampling of the discounted stock give
    \begin{equation*}
        V(s_0,t)\geq\mathbb E[e^{-r(\theta-t)}V_0(S_\theta)]
        =s_0+\mathbb E[e^{-r(T-t)}(K_1-S_T)^+\mathbf1_{\{\theta=T\}}]>s_0,
    \end{equation*}
    because $\mathbb P(\theta=T,\ S_T<K_1)>0$. This proves $B\equiv\ln k_1$ and completes (i).

    In case (ii), $\xi<\ln k_1$. Observation \eqref{b0_observation} implies $B(\tau)\geq\xi$: if $B(\tau)<\xi$, the contact interval would contain points where $\mathcal L\psi_1>0$. If $B(0)>\xi$, an interval compactly contained in $(\xi,B(0))$ would be in continuation at every positive time and in contact initially. Lemma \ref{lem_test} would force $\mathcal L\psi_1\geq0$ there, contrary to its strict negativity for $x>\xi$. Hence $B(0)=\xi$.

    If $B$ were constant on $[\tau_1,\tau_2]$ at a level $b<\ln k_1$, choose $0<h<\tau_2-\tau_1$ and a rectangle $(b-\eta,b)\times(\tau_1,\tau_2-h)$ with $b-\eta>\ln K_1$. The time increment $H_h$ used in the proof of Theorem \ref{a0} is positive inside, solves the homogeneous parabolic equation, and vanishes on $x=b$. Spatial smooth fit gives $\partial_xH_h(b,\tau)=e^b-e^b=0$, whereas the Hopf boundary lemma gives a negative derivative. This proves the asserted strict increase below the cap.

    Left continuity, including at $T$, follows from continuity of $Y$: a point strictly between $B(\tau_0-)$ and $B(\tau_0)$ would be in contact before $\tau_0$ and therefore at $\tau_0$, contradicting the definition of $B(\tau_0)$. If $\tau_0<T$ and $B(\tau_0+)>B(\tau_0)$, choose an interval compactly contained between these two levels. Its points are strictly greater than $\xi$, since $B(\tau_0)\geq\xi$. They are in contact at $\tau_0$ and in continuation immediately afterwards. Lemma \ref{lem_test} contradicts $\mathcal L\psi_1<0$ on this interval. Once $B$ reaches the cap, no right jump is possible. This proves continuity on $(0,T]$.
\end{proof}

We now turn to the problem \eqref{condor_spread_right}. The following lemma is the analogue of Lemma \ref{lemma_K_1}.
\begin{lemma} \label{lemma_K_2}
    For any $ \tau >0,   Y(\ln K_2,\tau) > k_2-K_1 $.
\end{lemma}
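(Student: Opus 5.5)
I will mirror the proof of Lemma \ref{lemma_K_1}, now for the solution $Y$ of \eqref{condor_spread_right} given by Theorem \ref{thm2.4}. Argue by contradiction and suppose there is $\tau^*>0$ with $Y(\ln K_2,\tau^*)=k_2-K_1=\psi_2(\ln K_2)$.

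The monotonicity relations \eqref{condor_spread_prop3} will propagate this contact in space and time. Since $\partial_\tau Y\geq0$ and $Y\geq\psi_2\geq k_2-K_1$, we get $Y(\ln K_2,\tau)=k_2-K_1$ for all $\tau\in[0,\tau^*]$. Since $\partial_xY\leq0$, for $x\geq\ln K_2$ we have $k_2-K_1\leq Y(x,\tau)\leq Y(\ln K_2,\tau)=k_2-K_1$. Hence $Y=k_2-K_1=\psi_2$ on $[\ln K_2,\infty)\times[0,\tau^*]$.

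For $x\in(\ln k_2,\ln K_2]$, I will use the lower bound $\partial_xY\geq-e^x$. Integrating from $x$ to $\ln K_2$ gives
\[
Y(x,\tau)\leq Y(\ln K_2,\tau)+\bigl(K_2-e^x\bigr)=K_2-e^x+k_2-K_1=\psi_2(x).
\]
Together with $Y\geq\psi_2$, this yields $Y=\psi_2$ on $(\ln k_2,\infty)\times[0,\tau^*]$.

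Consequently $Y(\cdot,\tau)=(K_2-e^x)^++k_2-K_1$ for $\tau\in(0,\tau^*]$. This function has a genuine corner at $x=\ln K_2$: the left derivative is $-K_2$ and the right derivative is $0$. This contradicts $Y\in W^{2,1}_{p,\mathrm{loc}}$, which for $p>3$ makes $\partial_xY$ continuous on $(\ln k_2,\infty)\times(0,T]$.

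The only delicate point is that $W^{2,1}_{p,\mathrm{loc}}$ regularity holds in the interior $(0,T]$ away from the excluded corner $(\ln K_2,0)$. Since $\tau^*>0$, a small neighbourhood of $(\ln K_2,\tau^*/2)$ avoids that disc, and the contradiction with the continuity of $\partial_xY$ applies there. Since this holds for every $p$ by Theorem \ref{thm_full_existence}, choosing $p>3$ is legitimate.
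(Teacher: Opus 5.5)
Your proof is correct and is essentially the paper's argument. The paper proves this lemma by transposing the proof of Lemma \ref{lemma_K_1}: it uses $\partial_\tau Y\geq0$ and $-e^x\leq\partial_xY\leq0$ from \eqref{condor_spread_prop3} to force $Y=\psi_2$ on $(\ln k_2,\infty)\times[0,\tau^*]$, and then contradicts $W^{2,1}_{p,\mathrm{loc}}$ regularity at the corner $x=\ln K_2$. Your localization away from the excluded initial disc at $(\ln K_2,0)$ makes explicit a point the paper leaves implicit.
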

The proof is the same as that of Lemma \ref{lemma_K_1}.
By Lemma \ref{lemma_K_2}, by \eqref{condor_spread_prop3} and by the analogue of Lemma \ref{lemma_nonempty} for the problem \eqref{condor_spread_right}, we can define the two free boundaries 
\begin{align*}
    &\widetilde{A}(\tau)=\min \{x : x \geq \ln K_2, Y(x,\tau)=k_2-K_1 \}, \\[0.3cm] 
    &\widetilde{B}(\tau)=\max \{x : \ln k_2\leq x \leq \ln K_2, Y(x,\tau)=K_2-e^x+k_2-K_1\}.
\end{align*}

Similar to Theorem \ref{a0}, we obtain the following theorem. 
\begin{theorem} \label{a0_tilde}
    $\widetilde{A}$ is strictly increasing and continuous on $[0,T]$. Furthermore, we have 
    \begin{equation*}
        \widetilde{A}(0)=\lim _{\tau \searrow 0} \widetilde{A}(\tau)=\ln K_2.
    \end{equation*}
\end{theorem}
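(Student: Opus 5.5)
We mirror the proof of Theorem \ref{a0}, working with the solution $Y$ of \eqref{condor_spread_right} from Theorem \ref{thm2.4}. The key facts are $\partial_\tau Y\geq0$ and $\partial_xY\leq0$ from \eqref{condor_spread_prop3}, Lemma \ref{lemma_K_2}, the right-side version of Lemma \ref{lem_test}, and the analogue of Lemma \ref{lemma_nonempty}. The latter gives $b_0>\ln K_2$ with $Y=k_2-K_1$ for $x\geq b_0$. It is proved with the reflected barrier $\widetilde w_b(x)=\frac{k_2-K_1}{n_2-n_1}[n_1e^{n_2(x-b)}-n_2e^{n_1(x-b)}]$ for $x<b$ and $\widetilde w_b=k_2-K_1$ for $x\geq b$, letting $b\to\infty$. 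On $(\ln K_2,\infty)$ the obstacle $\psi_2$ is the constant $k_2-K_1$, so $\mathcal L\psi_2=-r(k_2-K_1)<0$ there. Since $Y$ is nonincreasing in $x$ and bounded below by this constant, the set $\{x\geq\ln K_2: Y(x,\tau)=k_2-K_1\}$ is $[\widetilde A(\tau),\infty)$, with $\widetilde A(\tau)>\ln K_2$ by Lemma \ref{lemma_K_2}.

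\emph{Monotonicity and initial limit.} Since $\partial_\tau Y\geq0$, the contact set at level $k_2-K_1$ shrinks as $\tau$ grows, so $\widetilde A$ is nondecreasing. It is bounded by $b_0$, so $\widetilde A(0):=\lim_{\tau\searrow0}\widetilde A(\tau)$ exists and is at least $\ln K_2$. Suppose $\widetilde A(0)>\ln K_2$. Any interval compactly inside $(\ln K_2,\widetilde A(0))$ is in contact at $\tau=0$ and in continuation for all small positive times. Lemma \ref{lem_test} then forces $\mathcal L\psi_2\geq0$ there, contradicting $\mathcal L\psi_2<0$. Hence $\widetilde A(0)=\ln K_2$.

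\emph{Strict increase.} This is the main step. First we show $Z=\partial_\tau Y>0$ in the continuation region, following the corresponding argument of Theorem \ref{a0}. Each continuation slice is an interval containing $\ln K_2$ by Lemma \ref{lemma_K_2}. If $Z$ vanished at an interior point, the strong maximum principle applied in rectangles inside continuation would propagate the zero to the segment $\{\ln K_2\}\times(0,\tau_1]$. Then $Y(\ln K_2,\cdot)$ would be constant on this segment, hence equal to $\psi_2(\ln K_2)$, contradicting Lemma \ref{lemma_K_2}. Now suppose $\widetilde A=a$ on $[\tau_1,\tau_2]$. Take $H_h=Y(\cdot,\tau+h)-Y(\cdot,\tau)$ on $(a-\eta,a)\times(\tau_1,\tau_2-h)$ with $a-\eta>\ln K_2$. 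Then $H_h$ solves the homogeneous equation, is positive inside, and vanishes on the flat right boundary $x=a$. Smooth fit to the constant obstacle (from $p>3$) gives $\partial_xH_h(a,\tau)=0$. The Hopf lemma instead gives $\partial_xH_h(a,\tau)<0$, a contradiction; this is the flat-right-endpoint remark in that proof.

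\emph{Continuity.} Left continuity at each $\tau_0\in(0,T]$ follows from continuity of $Y$. Suppose $\widetilde A(\tau_0-)<\widetilde A(\tau_0)$. Points strictly between these values are in contact before $\tau_0$, hence also at $\tau_0$, which contradicts the definition of $\widetilde A(\tau_0)$. Now suppose a right jump $\widetilde A(\tau_0+)>\widetilde A(\tau_0)$ occurs for some $\tau_0<T$. Then an interval between these values lies in $(\ln K_2,\infty)$, is in contact at $\tau_0$, and is in continuation just after $\tau_0$. Lemma \ref{lem_test}, applied from initial time $\tau_0$, again contradicts $\mathcal L\psi_2<0$. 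Combined with the limit at $0$, this gives continuity on $[0,T]$.
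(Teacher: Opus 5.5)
Your proposal is correct and follows the paper's proof essentially step for step. The initial limit comes from the right-hand version of Lemma \ref{lem_test} on the constant part of $\psi_2$, strict increase from $\partial_\tau Y>0$ in continuation plus the time-increment/Hopf argument at a flat right endpoint, and continuity from excluding left jumps via continuity of $Y$ and right jumps via Lemma \ref{lem_test}. One small slip: your barrier for the analogue of Lemma \ref{lemma_nonempty} has the wrong overall sign, since it equals $-(k_2-K_1)$ at $x=b$. The correct choice is $\frac{k_2-K_1}{n_2-n_1}\bigl[n_2e^{n_1(x-b)}-n_1e^{n_2(x-b)}\bigr]$ for $x<b$, which has value $k_2-K_1$ and zero slope at $b$, and decreases in $x$ on $(-\infty,b)$.
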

\begin{proof}
    By \eqref{condor_spread_prop3}, the continuation intervals of the right-hand problem expand with time, so $\widetilde A$ is nondecreasing. Its initial limit cannot exceed $\ln K_2$: otherwise an interval between $\ln K_2$ and this limit would be in continuation for every positive time, and initially in contact with the constant obstacle $k_2-K_1$. The right-hand version of Lemma \ref{lem_test} would contradict $\mathcal L(k_2-K_1)=-r(k_2-K_1)<0$. Thus $\widetilde A(0)=\ln K_2$.

    The strict positivity argument for $\partial_\tau Y$ in the proof of Theorem \ref{a0} applies with $\ln K_2$ as the fixed point inside all continuation intervals, using Lemma \ref{lemma_K_2}. If $\widetilde A$ were flat at $a$ on a positive time interval, its positive time increment $H_h$ would solve the homogeneous equation in a rectangle immediately to the left of $a$, vanish on $x=a$, and satisfy $\partial_xH_h(a,\tau)=0$ by spatial fit to the constant obstacle. The Hopf boundary lemma gives $\partial_xH_h(a,\tau)<0$, a contradiction. Hence $\widetilde A$ is strictly increasing.

    A left jump, including one at $T$, is excluded by the continuity of $Y$, as in the proof of Theorem \ref{a0}. A right jump at a time smaller than $T$ would release an interval on which the obstacle is the constant $k_2-K_1$, and Lemma \ref{lem_test} excludes it because $\mathcal L(k_2-K_1)=-r(k_2-K_1)<0$. Together with the initial limit this proves continuity on $[0,T]$.
\end{proof}
Similar to Theorem \ref{b0}, we have the following theorem.
\begin{theorem} \label{b0_tilde}
        $\widetilde{B}$ is nonincreasing and continuous on $(0,T]$ with  
        \begin{equation*}
            \widetilde{B}(0)=
            \begin{cases} 
                \ln K_2, & \text{if } q=0, \\[0.3cm]  
                \max\left\{\ln k_2, \min\left\{\ln K_2,\ln \frac{r(K_2+k_2-K_1)}{q}\right\}\right\}, & \text{if } q>0.
            \end{cases}
        \end{equation*}
        Furthermore, $\widetilde{B}$ is strictly decreasing on $\{\tau\in(0,T] : \widetilde{B}(\tau)>\ln k_2\}$, and if $q>0$ and $r(K_2+k_2-K_1)/q\leq k_2$, then $\widetilde{B}(\tau)=\ln k_2$ for all $\tau\in(0,T]$. 
\end{theorem}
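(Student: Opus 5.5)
The proof mirrors that of Theorem \ref{b0} for the right-hand problem \eqref{condor_spread_right}, so the first step is to compute the sign of $\mathcal L\psi_2$ on $(\ln k_2,\ln K_2)$. There $\psi_2(x)=K_2+k_2-K_1-e^x$, and $\mathcal L e^x=-qe^x$, $\mathcal L 1=-r$, so
\begin{equation*}
    \mathcal L\psi_2(x)=qe^x-r(K_2+k_2-K_1),\qquad x\in(\ln k_2,\ln K_2).
\end{equation*}
Since $K_2+k_2-K_1>0$ always, this is strictly negative on the whole interval when $q=0$. For $q>0$, set $\widetilde\xi:=\min\{\ln K_2,\ln\frac{r(K_2+k_2-K_1)}{q}\}$. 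Then $\mathcal L\psi_2<0$ on $(\ln k_2,\widetilde\xi)$ and $\mathcal L\psi_2>0$ on $(\widetilde\xi,\ln K_2)$, and the claimed value of $\widetilde B(0)$ is $\max\{\ln k_2,\widetilde\xi\}$. By Lemma \ref{lemma_K_2} and the monotonicity $-e^x\leq\partial_xY\leq0$ in \eqref{condor_spread_prop3}, the upper exercise set is $[\ln k_2,\widetilde B(\tau)]$ with $\widetilde B(\tau)<\ln K_2$.

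Next I get monotonicity and the observation that locates the boundary. Since $\partial_\tau Y\geq0$ by \eqref{condor_spread_prop3}, the contact sets shrink in time, so $\widetilde B$ is nonincreasing and $\widetilde B(0):=\lim_{\tau\searrow0}\widetilde B(\tau)$ exists. Suppose $\widetilde B(\tau_0)>\ln k_2$. Then $(\ln k_2,\widetilde B(\tau_0)]\times(0,\tau_0]$ is contained in the contact set, and the obstacle inequality together with the a.e.\ identification of derivatives gives $\mathcal L\psi_2\leq0$ on $(\ln k_2,\widetilde B(\tau_0)]$. This yields $\widetilde B(\tau)\leq\max\{\ln k_2,\widetilde\xi\}$ for every $\tau>0$. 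In the case $q>0$ and $r(K_2+k_2-K_1)/q\leq k_2$, we have $\mathcal L\psi_2>0$ on all of $(\ln k_2,\ln K_2)$, so no contact is possible there and $\widetilde B\equiv\ln k_2$.

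For the initial limit, suppose $\widetilde B(0)<\max\{\ln k_2,\widetilde\xi\}$. Then an interval compactly inside $(\widetilde B(0),\widetilde\xi)$ is in contact at $\tau=0$ and in continuation for all $\tau>0$. The right-hand version of Lemma \ref{lem_test} then forces $\mathcal L\psi_2\geq0$ on that interval, which contradicts $\mathcal L\psi_2<0$ there. For $q=0$ the same argument applies with $\widetilde\xi=\ln K_2$.

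For strict decrease where $\widetilde B>\ln k_2$, suppose $\widetilde B=b>\ln k_2$ on $[\tau_1,\tau_2]$. I use the time increment $H_h$ on a rectangle $(b,b+\eta)\times(\tau_1,\tau_2-h)$ with $b+\eta<\ln K_2$. Strict positivity of $\partial_\tau Y$ in continuation holds as in the proof of Theorem \ref{a0}, with $\ln K_2$ as the anchor point via Lemma \ref{lemma_K_2}. Smooth fit gives $\partial_xH_h(b,\tau)=-e^b+e^b=0$, whereas the Hopf lemma gives $\partial_xH_h(b,\tau)>0$, a contradiction. This is the main delicate point: the argument needs $b$ strictly inside the domain, so that the $C^1$ fit holds ($p>3$) and the rectangle avoids the prescribed boundary $x=\ln k_2$. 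That is why strictness is claimed only where $\widetilde B>\ln k_2$.

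Continuity on $(0,T]$ follows as in Theorem \ref{b0}. Left jumps are excluded by continuity of $Y$. A right jump $\widetilde B(\tau_0+)<\widetilde B(\tau_0)$ would release an interval lying below $\widetilde B(\tau_0)\leq\max\{\ln k_2,\widetilde\xi\}$, so $\mathcal L\psi_2<0$ on it, and Lemma \ref{lem_test} excludes this.
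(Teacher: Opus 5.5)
Your proposal is correct and follows the paper's proof essentially step by step. It uses the sign of $\mathcal L\psi_2=qe^x-r(K_2+k_2-K_1)$, the contact-strip bound $\widetilde B\le\max\{\ln k_2,\widetilde\xi\}$ (your $\widetilde\xi$ is the paper's $\zeta$), Lemma \ref{lem_test} for the initial limit and for excluding right jumps, continuity of $Y$ for left jumps, and the time-increment/Hopf argument on a rectangle just to the right of a flat level $b>\ln k_2$ for strict decrease.
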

\begin{proof}
    Time monotonicity of $Y$ makes $\widetilde B$ nonincreasing, and the boundary value at $\ln k_2$ gives $\widetilde B\geq\ln k_2$. Write $C_2:=K_2+k_2-K_1$, so that
    \begin{equation*}
        \mathcal L\psi_2=qe^x-rC_2\qquad(\ln k_2<x<\ln K_2).
    \end{equation*}
    If $\widetilde B(\tau_0)>\ln k_2$, the contact strip $(\ln k_2,\widetilde B(\tau_0))\times(0,\tau_0]$ and the strong obstacle inequalities force $\mathcal L\psi_2\leq0$ there. Consequently, for $q>0$, $\widetilde B(\tau_0)\leq\ln(rC_2/q)$. If $rC_2/q\leq k_2$, this excludes every such nonempty contact strip, proving $\widetilde B\equiv\ln k_2$.

    In all other cases define
    \begin{equation*}
        \zeta:=
        \begin{cases}
            \ln K_2,&q=0,\\
            \min\{\ln K_2,\ln(rC_2/q)\},&q>0.
        \end{cases}
    \end{equation*}
    Then $\zeta>\ln k_2$, $\widetilde B(\tau)\leq\zeta$, and $\mathcal L\psi_2<0$ on $(\ln k_2,\zeta)$. If $\widetilde B(0):=\lim_{\tau\searrow0}\widetilde B(\tau)<\zeta$, an interval compactly contained in $(\widetilde B(0),\zeta)$ would be in continuation for all positive times and initially in contact, contrary to Lemma \ref{lem_test}. Hence $\widetilde B(0)=\zeta$, giving the stated formula in every case.

    As in the proof of Theorem \ref{a0_tilde}, $\partial_\tau Y>0$ in the interior of continuation. If $\widetilde B$ were constant at a level $b>\ln k_2$ on a time interval, the time-increment argument applies on a rectangle immediately to the right of $b$. On $x=b$, the function $Y$ takes the value $\psi_2(b)$ and has the spatial derivative $-e^b$ at both times $\tau$ and $\tau+h$, so $H_h=\partial_xH_h=0$ there. The Hopf boundary lemma instead gives $\partial_xH_h>0$. This proves strict decrease above the cap.

    Left continuity at every positive time, including $T$, follows from the continuity of $Y$ as before. If there were a right jump downwards at $\tau_0<T$, an interval strictly between $\widetilde B(\tau_0+)$ and $\widetilde B(\tau_0)$ would be in contact at $\tau_0$ and in continuation immediately afterwards. It lies below $\zeta$, so $\mathcal L\psi_2<0$ there, contradicting Lemma \ref{lem_test}. The constant case has no jumps. This proves continuity on $(0,T]$.
\end{proof}

We now relate the free boundaries of the condor spread to those of the uncapped problem \eqref{uncapped_left}. Broadie and Detemple \cite[Theorem 1]{broadie1995american} proved that the exercise boundary of an American capped call option with constant cap $L$ is the minimum of $L$ and the exercise boundary of the uncapped call. The problem \eqref{condor_spread_left} plays the role of the capped option, the plateau level $k_1$ plays the role of the cap, and the problem \eqref{uncapped_left} plays the role of the uncapped option. We first record the properties of the uncapped free boundary. Let $Y^\infty$ be the solution of \eqref{uncapped_left} given by Proposition \ref{prop_uncapped}. By \eqref{uncapped_bounds}, $x\mapsto Y^\infty(x,\tau)$ is nondecreasing and $x\mapsto Y^\infty(x,\tau)-\psi_1(x)$ is nonincreasing on $(\ln K_1,\infty)$, so the set $\{x\geq\ln K_1 : Y^\infty(x,\tau)=\psi_1(x)\}$ is either empty or a closed half line. We define
\begin{equation} \label{def_Binfty}
    B^{\infty}(\tau):=\inf\left\{x\geq\ln K_1 : Y^{\infty}(x,\tau)=\psi_1(x)\right\}\in[\ln K_1,+\infty], \qquad \tau\in(0,T],
\end{equation}
with the convention $\inf\emptyset=+\infty$.
\begin{proposition} \label{prop_uncapped_boundary}
    For every $\tau\in(0,T]$ we have $Y^\infty(\ln K_1,\tau)>K_2-k_1$, so that $B^\infty(\tau)>\ln K_1$, and $B^\infty$ is nondecreasing on $(0,T]$. Moreover, the following statements hold.
    \begin{enumerate}[label=(\roman*)]
        \item If $q=0$ and $K_1+k_1\geq K_2$, then $B^\infty(\tau)=+\infty$ for all $\tau\in(0,T]$.
        \item Otherwise $B^\infty$ is continuous on $(0,T]$ as a map into $(\ln K_1,+\infty]$, it is strictly increasing on $\{\tau : B^\infty(\tau)<\infty\}$, and $\lim_{\tau\searrow0}B^\infty(\tau)=\xi$.
    \end{enumerate}
\end{proposition}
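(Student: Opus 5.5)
The plan is to transfer the arguments of Lemma \ref{lemma_K_1}, Theorem \ref{b0} and the strict-monotonicity step of Theorem \ref{a0} to the whole-line solution $Y^\infty$. The tools are Proposition \ref{prop_uncapped}, Lemma \ref{lem_test} (whose proof is local and applies verbatim to $Y^\infty$ on intervals in $(\ln K_1,\infty)$) and the strong maximum principle.

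\emph{Step 1: $Y^\infty(\ln K_1,\tau)>K_2-k_1$.} Suppose $Y^\infty(\ln K_1,\tau^*)=K_2-k_1$ for some $\tau^*>0$. By \eqref{uncapped_bounds}, $Y^\infty$ is nondecreasing in $x$ and $\tau$ and bounded below by $\psi_1$, so $Y^\infty=K_2-k_1$ on $(-\infty,\ln K_1]\times[0,\tau^*]$. Since $0\le\partial_x Y^\infty\le e^x$ and $\psi_1-e^x$ is constant on $[\ln K_1,\infty)$, we also get $Y^\infty=\psi_1$ on $[\ln K_1,\infty)\times[0,\tau^*]$. Then $Y^\infty(\cdot,\tau)=\psi_1$ has a corner at $\ln K_1$ for positive times, contradicting $Y^\infty\in W^{2,1}_{p,\rm loc}$ away from $(\ln K_1,0)$ for $p>3$, which gives $C^1$ in $x$. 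Hence $B^\infty(\tau)>\ln K_1$. Because $\partial_\tau Y^\infty\ge0$, the contact sets shrink in $\tau$, so $B^\infty$ is nondecreasing.

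\emph{Step 2: case (i).} Suppose $q=0$ and $K_1+k_1>K_2$, and assume $B^\infty(\tau_0)<\infty$. Then $[B^\infty(\tau_0),\infty)\times(0,\tau_0]$ lies in the contact set, and as in \eqref{b0_observation} this forces $\mathcal L\psi_1\le0$ there. But $\mathcal L\psi_1=r(K_1+k_1-K_2)>0$, a contradiction. In the borderline case $K_1+k_1=K_2$, $q=0$, I would argue probabilistically as in Theorem \ref{b0}(i). The uncapped payoff is $\max\{s,K_1\}$, and stopping at $T$ gives the value $s_0+\mathbb E[e^{-r(T-t)}(K_1-S_T)^+]>s_0$ by optional sampling of the discounted stock ($q=0$). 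This step needs the verification identification for the uncapped problem. Alternatively I would argue analytically: with $q=0$, $e^x$ is a solution of $(\partial_\tau-\mathcal L)u=0$, so $e^x+(\text{European put value})$ is a subsolution strictly above $\psi_1$ at positive times, and Lemma \ref{lem_comparison}(ii) applies. I prefer this analytic route.

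\emph{Step 3: case (ii).} First, $B^\infty(\tau)\ge\xi$ whenever $B^\infty(\tau)$ is finite, by the same contact-strip observation, since $\mathcal L\psi_1>0$ on $(\ln K_1,\xi)$. If $q>0$, finiteness holds because large $x$ must be exercised: I would compare with a stationary supersolution, e.g. the perpetual call-plus-cash value, to show $B^\infty(\tau)<\infty$. For $q=0$ with $K_1+k_1<K_2$ the boundary may be $+\infty$ for some $\tau$, and the statement allows this. The limit $\lim_{\tau\searrow0}B^\infty=\xi$ follows from Lemma \ref{lem_test}: if the limit exceeded $\xi$, an interval in $(\xi,\lim)$ would be released immediately while $\mathcal L\psi_1<0$ there. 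Strict increase on $\{B^\infty<\infty\}$ comes from the time-increment/Hopf argument of Theorem \ref{a0}. The positivity $\partial_\tau Y^\infty>0$ in the continuation region uses Step 1 with $\ln K_1$ as the anchor point. Continuity into $(\ln K_1,+\infty]$ has two parts. Left continuity follows from continuity of $Y^\infty$. A right jump upward (including to $+\infty$) would release an interval above $\xi$, which is excluded by Lemma \ref{lem_test}. Note that a jump from $+\infty$ downward is impossible by monotonicity.

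The main obstacle is the borderline/finiteness issues at $x\to\infty$. These include showing finiteness and the limit $\xi$ when $q>0$, and handling the unbounded growth $e^x$ in Lemma \ref{lem_test} and in the Hopf argument. I would check that all those arguments are local, so that the growth is harmless.
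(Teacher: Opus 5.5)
Your proposal is correct and follows the paper's proof essentially step by step. It uses the argument of Lemma \ref{lemma_K_1} at the anchor point $\ln K_1$, and the contact-strip inequality both for the strict case of (i) and for $B^\infty\geq\xi$. In the borderline case $q=0$, $K_1+k_1=K_2$, your subsolution $e^x+{}$(European put) is exactly the paper's European value $u$. It then applies Lemma \ref{lem_test} to the initial limit and to right jumps, and the time-increment/Hopf argument for strict increase.

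The differences are inessential. The finiteness step via a stationary supersolution is unnecessary, because the statement and your arguments already allow $B^\infty=+\infty$. Also, $u$ is not strictly above $\psi_1$ everywhere (it tends to $e^{-r\tau}K_1<K_1$ as $x\to-\infty$), but you only use strict domination for $x>\ln K_1$.
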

\begin{proof}
    The proof of Lemma \ref{lemma_K_1} applies to $Y^\infty$: the bounds \eqref{uncapped_bounds} and spatial $C^1$ regularity exclude contact at the upward corner $\ln K_1$. Time monotonicity gives that $B^\infty$ is nondecreasing. If it is finite at $\tau_0$, the same contact-strip argument as in \eqref{b0_observation} gives $\mathcal L\psi_1\leq0$ on $[B^\infty(\tau_0),\infty)$. This is impossible when $q=0$ and $K_1+k_1>K_2$.

    When $q=0$ and $K_1+k_1=K_2$, put $X_u=x+(r-\sigma^2/2)u+\sigma W_u$ and
    $u(x,\tau):=\mathbb E[e^{-r\tau}\psi_1(X_\tau)]$.
    This European value is a subsolution of \eqref{uncapped_left}, with the required exponential growth, so $Y^\infty\geq u$ by Lemma \ref{lem_comparison}. Here $\psi_1(x)=\max\{e^x,K_1\}$, and the discounted stock is a martingale. Since the normal distribution of $X_\tau$ assigns positive probability to $X_\tau<\ln K_1$, we have $u(x,\tau)>e^x=\psi_1(x)$ for every $x>\ln K_1$ and $\tau>0$. This proves (i).

    In all other cases, the contact-strip inequality gives $B^\infty(\tau)\geq\xi$. If the initial limit were greater than $\xi$, including the possibility $+\infty$, choose a finite interval strictly between $\xi$ and that limit. It would be in continuation for every positive time and initially in contact. The proof of Lemma \ref{lem_test}, which is local and applies without change to the uncapped problem, would contradict $\mathcal L\psi_1<0$ there. Hence the initial limit equals $\xi$.

    The continuation sections of $Y^\infty$ are intervals containing $\ln K_1$ at all positive times, by \eqref{uncapped_bounds}. Therefore the interior strict time-positivity argument in the proof of Theorem \ref{a0} applies to $Y^\infty$ as well. If $B^\infty$ were constant at a finite level on a positive time interval, the time-increment and Hopf argument in the proof of Theorem \ref{b0} would give the same contradiction, since this boundary is an interior spatial point. Thus $B^\infty$ is strictly increasing whenever finite.

    To check continuity in the extended real line, first suppose that $B^\infty(\tau_0-)$ is strictly smaller than $B^\infty(\tau_0)$, where the latter may be infinite. Choose a finite point strictly between them. It is in contact just before $\tau_0$ and hence at $\tau_0$, a contradiction. This proves left continuity, also at $T$. For a right jump at $\tau_0<T$, the value $B^\infty(\tau_0)$ must be finite. Choose a finite interval strictly between it and $B^\infty(\tau_0+)$, even if the latter is infinite. Its points exceed $\xi$, are in contact at $\tau_0$, and enter continuation immediately afterwards. The uncapped version of Lemma \ref{lem_test} contradicts $\mathcal L\psi_1<0$ there. If $B^\infty(\tau_0)=+\infty$, right continuity is automatic by monotonicity. This proves (ii).
\end{proof}

\begin{theorem} \label{thm_reduction}
    Let $Y$ and $B$ be the solution and the upper free boundary of \eqref{condor_spread_left}, and let $Y^\infty$ and $B^\infty$ be the solution of \eqref{uncapped_left} and its free boundary \eqref{def_Binfty}. Then the following statements hold.
    \begin{enumerate}[label=(\roman*)]
        \item $Y\leq Y^\infty$ on $(-\infty,\ln k_1]\times[0,T]$.
        \item $B(\tau)=\min\left\{B^\infty(\tau),\ \ln k_1\right\}$ for every $\tau\in(0,T]$.
        \item Let $\tau^*:=\sup\{\tau\in(0,T] : B^\infty(\tau)\leq\ln k_1\}$, with $\tau^*:=0$ if this set is empty. Then $Y=Y^\infty$ on $(-\infty,\ln k_1]\times[0,\tau^*]$. In particular, on $[0,\tau^*]$ the lower free boundary $A$ coincides with the lower free boundary
        \begin{equation*}
            A^\infty(\tau):=\max\left\{x\leq\ln K_1 : Y^\infty(x,\tau)=K_2-k_1\right\}
        \end{equation*}
        of the uncapped problem, and $A^\infty(\tau)\leq A(\tau)$ for all $\tau\in(0,T]$.
    \end{enumerate}
\end{theorem}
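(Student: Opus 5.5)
For (i) I will use Lemma \ref{lem_comparison} (i) on the half line $I=(-\infty,\ln k_1)$, taking $w:=Y^\infty$ restricted there. By Proposition \ref{prop_uncapped}, $Y^\infty\in\mathcal W_p$, $Y^\infty\geq\psi_1$, and $(\partial_\tau-\mathcal L)Y^\infty\geq0$ almost everywhere. At the finite endpoint, $Y^\infty(\ln k_1,\tau)\geq\psi_1(\ln k_1)=K_2-K_1$, which is the boundary value of \eqref{condor_spread_left}. So $Y^\infty$ is a supersolution with growth at most $Ce^{|x|}$, and (i) follows.

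For (ii) I will prove two inequalities.
\begin{itemize}
\item \emph{$B\leq\min\{B^\infty,\ln k_1\}$.} We already have $B\leq\ln k_1$. If $B^\infty(\tau)<\ln k_1$, take $x\in[B^\infty(\tau),\ln k_1)$. Then (i) gives $\psi_1(x)\leq Y(x,\tau)\leq Y^\infty(x,\tau)=\psi_1(x)$, so $x$ is a contact point of $Y$. Hence $B(\tau)\leq B^\infty(\tau)$.
\item \emph{$B\geq\min\{B^\infty,\ln k_1\}$.} This is the main step. Fix $\tau_0$ with $b:=B(\tau_0)<\ln k_1$; when $B(\tau_0)=\ln k_1$ there is nothing to prove. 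We must show $B^\infty(\tau_0)\leq b$. By Theorem \ref{b0}, $B$ is nondecreasing, so $[b,\ln k_1)\times(0,\tau_0]$ lies in the contact set of $Y$.
\end{itemize}

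I will build a candidate for the uncapped problem on $\mathbb R\times(0,\tau_0]$. Define $\widehat Y:=Y$ for $x<\ln k_1$ and $\widehat Y:=\psi_1$ for $x\geq\ln k_1$, for $\tau\leq\tau_0$. Across $x=\ln k_1$ this function equals $\psi_1$ on a neighbourhood $(b,\infty)$, so it is $C^1$ there and belongs to $\mathcal W_p$. On $\{\widehat Y>\psi_1\}$ it solves the equation. Everywhere it satisfies $(\partial_\tau-\mathcal L)\widehat Y\geq0$, because on $x>b$ with $x\neq\ln K_1$ we have $-\mathcal L\psi_1=qe^x-r(K_1+k_1-K_2)$. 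This quantity is $\geq0$ since $x\geq b\geq\xi$: for $\tau_0\in(0,T]$ the contact-strip observation \eqref{b0_observation} gives $b\geq\xi$, and $-\mathcal L\psi_1$ is nondecreasing in $x$. Thus $\widehat Y$ is a strong solution of \eqref{uncapped_left} on horizon $\tau_0$, with growth at most $e^x$. By uniqueness (Lemma \ref{lem_comparison}) it equals $Y^\infty$ on $[0,\tau_0]$, since the uncapped problem on a shorter horizon is the restriction of the one on $[0,T]$. Therefore $Y^\infty(\cdot,\tau_0)=\psi_1$ on $[b,\infty)$, so $B^\infty(\tau_0)\leq b$.

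For (iii), if $\tau^*>0$ then $B^\infty\leq\ln k_1$ on $(0,\tau^*)$ by monotonicity. The cases $B^\infty(\tau)=\ln k_1$ exactly, or reached only at $\tau^*$, are handled by continuity (Proposition \ref{prop_uncapped_boundary}). On this interval, $Y^\infty(\ln k_1,\tau)=\psi_1(\ln k_1)=K_2-K_1$. So $Y^\infty$ restricted to the half line is a strong solution of \eqref{condor_spread_left} on $[0,\tau]$ for every $\tau<\tau^*$, and uniqueness gives $Y=Y^\infty$ there. Continuity of both functions extends the identity to $\tau^*$. The identity of the lower boundaries on $[0,\tau^*]$ follows from their definitions. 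For the inequality $A^\infty\leq A$ at all times I use (i): if $Y^\infty(x,\tau)=K_2-k_1$ with $x\leq\ln K_1$, then $K_2-k_1\leq Y\leq Y^\infty=K_2-k_1$, so $x$ is a contact point of $Y$. The points needing care are the $\mathcal W_p$ regularity of the glued function and the possibility that $b$ coincides with the corner, which is excluded since $B>\ln K_1$.
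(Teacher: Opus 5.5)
Your proof is correct. Parts (i) and (iii) follow the paper essentially verbatim: supersolution comparison for (i), and for (iii) the identity $Y^\infty(\ln k_1,\tau)=K_2-K_1$ below $\tau^*$ combined with uniqueness for the capped problem.

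The genuine difference is in the inequality $B\geq\min\{B^\infty,\ln k_1\}$.

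\textbf{The paper's route.} It argues globally in time. It splits into the parameter cases of Theorem \ref{b0} and identifies $Y=Y^\infty$ on $[0,\tau^*]$. For $\tau>\tau^*$ it uses $\partial_\tau Y\geq0$ together with $B^\infty(\tau^*)=\ln k_1$. That identity, and $\tau^*>0$, rely on the continuity, strict monotonicity and initial limit of $B^\infty$ from Proposition \ref{prop_uncapped_boundary}. The payoff is that the argument never leaves the capped domain.

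\textbf{Your route.} You argue at a single time $\tau_0$ with $B(\tau_0)<\ln k_1$. The contact strip $[B(\tau_0),\ln k_1)\times(0,\tau_0]$ lets you continue $Y$ by the obstacle past the cap. Then \eqref{b0_observation} and the monotonicity of $qe^x-r(K_1+k_1-K_2)$ give $-\mathcal L\psi_1\geq0$ on $[B(\tau_0),\infty)$. This makes the glued function a strong solution of \eqref{uncapped_left}, so uniqueness forces it to equal $Y^\infty$. The approach needs no case distinction and no fine properties of $B^\infty$. It also shows directly that $Y=Y^\infty$ up to any time at which the capped boundary lies below the cap. The cost is the gluing verification, which you handle correctly: the glued function equals $\psi_1$ on $(B(\tau_0),\infty)\times(0,\tau_0]$. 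This region stays away from the corner because $B>\ln K_1$, and $\psi_1$ is smooth there.

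\textbf{One small omission.} You show that every lower contact point of $Y^\infty$ is a contact point of $Y$. However, for $\tau>\tau^*$ you never show that the set defining $A^\infty(\tau)$ is nonempty, so the maximum is not yet known to exist. The paper supplies this with the whole-line version of the barrier $w_a$ from Lemma \ref{lemma_nonempty}:
\begin{itemize}
\item Since $n_2\geq1$, the $e^{n_2(x-a)}$ term dominates $e^x+K_2-k_1$ for $a$ sufficiently negative.
\item Lemma \ref{lem_max_principle} is then applied to $Y^\infty-w_a$. Although $w_a$ may grow faster than $e^{|x|}$, this difference satisfies the required one-sided growth bound because $w_a\geq0$.
\item This gives $Y^\infty=K_2-k_1$ on $(-\infty,a]\times[0,T]$.
\end{itemize}
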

\begin{proof}
    The restriction of $Y^\infty$ to $(-\infty,\ln k_1]\times[0,T]$ is bounded by $K_2$, it satisfies $Y^\infty\geq\psi_1$ and $\partial_\tau Y^\infty-\mathcal{L}Y^\infty\geq0$ almost everywhere, and $Y^\infty(\ln k_1,\tau)\geq\psi_1(\ln k_1)=K_2-K_1$. Hence it is a supersolution of \eqref{condor_spread_left}, and Lemma \ref{lem_comparison} (i) gives (i). In particular every point of the exercise region of $Y^\infty$ with $x\leq\ln k_1$ belongs to the exercise region of $Y$. This gives $B(\tau)\leq B^\infty(\tau)$ whenever $B^\infty(\tau)\leq\ln k_1$, and since $B(\tau)\leq\ln k_1$ always, we obtain $B(\tau)\leq\min\{B^\infty(\tau),\ln k_1\}$. The lower coincidence set of $Y^\infty$ is nonempty. Indeed, the positive characteristic root in Lemma \ref{lemma_nonempty} satisfies $n_2\geq1$, since $q\geq0$. The positive $e^{n_2(x-a)}$ term in $w_a$ therefore dominates $e^x+K_2-k_1$ on $[\ln K_1,\infty)$ when $a$ is sufficiently negative, while $w_a\geq K_2-k_1$ everywhere. Thus $w_a\geq\psi_1$ on $\mathbb R$ and $(\partial_\tau-\mathcal L)w_a\geq0$. Although $w_a$ may grow faster than $e^x$, the difference $Y^\infty-w_a$ satisfies the upper growth bound in Lemma \ref{lem_max_principle}, because $w_a\geq0$. On its positive set $Y^\infty$ is in continuation, so this difference satisfies the required differential inequality and is nonpositive initially. That lemma gives $Y^\infty\leq w_a$. Hence $Y^\infty=K_2-k_1$ on $(-\infty,a]\times[0,T]$. The lower coincidence set is consequently a nonempty closed half line, so $A^\infty$ is well defined, and (i) gives $A^\infty(\tau)\leq A(\tau)$.

    For the reverse inequality in (ii) we distinguish three cases. If $q=0$ and $K_1+k_1\geq K_2$, then $B^\infty\equiv+\infty$ by Proposition \ref{prop_uncapped_boundary} (i) and $B\equiv\ln k_1$ by Theorem \ref{b0} (i), so (ii) holds and $\tau^*=0$. If $q>0$ and $\xi\geq\ln k_1$, then $B\equiv\ln k_1$ by Theorem \ref{b0} (i), while $B^\infty(\tau)\geq\lim_{u\searrow0}B^\infty(u)=\xi\geq\ln k_1$ by Proposition \ref{prop_uncapped_boundary} (ii), so (ii) holds again with $\tau^*=0$, because $B^\infty$ is strictly increasing. In the remaining case $\xi<\ln k_1$, Proposition \ref{prop_uncapped_boundary} (ii) gives $\tau^*>0$ and, by continuity and monotonicity, $B^\infty(\tau)\leq\ln k_1$ for $\tau\in(0,\tau^*]$ and $B^\infty(\tau)>\ln k_1=B^\infty(\tau^*)$ for $\tau\in(\tau^*,T]$ if $\tau^*<T$. For $\tau\in(0,\tau^*]$ we have $\ln k_1\geq B^\infty(\tau)$ and therefore $Y^\infty(\ln k_1,\tau)=\psi_1(\ln k_1)=K_2-K_1$. Consequently the restriction of $Y^\infty$ to $(-\infty,\ln k_1]\times[0,\tau^*]$ is a bounded strong solution of \eqref{condor_spread_left} with $T$ replaced by $\tau^*$, and the uniqueness in Lemma \ref{lem_comparison} gives $Y=Y^\infty$ on $(-\infty,\ln k_1]\times[0,\tau^*]$, which is (iii). For $\tau\in(0,\tau^*]$ the sets $\{x\in[\ln K_1,\ln k_1] : Y(x,\tau)=\psi_1(x)\}$ and $\{x\in[\ln K_1,\ln k_1] : Y^\infty(x,\tau)=\psi_1(x)\}$ coincide, and the second one is $[B^\infty(\tau),\ln k_1]$, so $B(\tau)=B^\infty(\tau)=\min\{B^\infty(\tau),\ln k_1\}$. Finally let $\tau\in(\tau^*,T]$ and $x\in[\ln K_1,\ln k_1)$. Since $\partial_\tau Y\geq0$ and $x<\ln k_1=B^\infty(\tau^*)$, we get
    \begin{equation*}
        Y(x,\tau)\geq Y(x,\tau^*)=Y^\infty(x,\tau^*)>\psi_1(x).
    \end{equation*}
    Hence $B(\tau)=\ln k_1=\min\{B^\infty(\tau),\ln k_1\}$, which completes the proof of (ii).
\end{proof}

\begin{remark} \label{rem_reduction}
    Theorem \ref{thm_reduction} extends the constant-cap boundary reduction of Broadie and Detemple \cite[Theorem 1]{broadie1995american} to the payoff with a cash component. It identifies the inner boundary $B$ as the uncapped boundary cut off at $\ln k_1$. The value functions coincide on the initial time interval $[0,\tau^*]$ defined in that theorem. If $0<\tau^*<T$, the uncapped boundary first reaches $\ln k_1$ at $\tau^*$. The definition also covers the cases in which the cap is active immediately or is not reached within the horizon. For $\tau>\tau^*$, $Y\leq Y^\infty$, with strict inequality at $x=\ln k_1$. Strict inequality need not hold everywhere: both functions equal the cash payoff for sufficiently negative $x$. The outer boundary satisfies $A\geq A^\infty$, and no cutoff formula for $A$ is asserted after $\tau^*$.

    The same argument applies on the put side. Construct the uncapped solution $\widetilde Y^\infty$ on $\mathbb R$ with obstacle $\psi_2=(K_2-e^x)^++k_2-K_1$, and put
    \begin{equation*}
        \widetilde B^\infty(\tau):=\sup\{x\leq\ln K_2:
        \widetilde Y^\infty(x,\tau)=\psi_2(x)\},
        \qquad \sup\emptyset:=-\infty.
    \end{equation*}
    This boundary is nonincreasing. Its initial limit is $\ln K_2$ if $q=0$ and $\min\{\ln K_2,\ln[r(K_2+k_2-K_1)/q]\}$ if $q>0$. The contact-strip, strict time-positivity and jump-exclusion arguments above apply with left and right interchanged. Let $\widetilde\tau^*:=\sup\{\tau\in(0,T]:\widetilde B^\infty(\tau)\geq\ln k_2\}$, with $\widetilde\tau^*:=0$ if this set is empty. Before this time the two problems have the same data at $\ln k_2$, so uniqueness identifies their solutions on $[\ln k_2,\infty)\times[0,\widetilde\tau^*]$. If $\widetilde\tau^*=0$, the initial limit of $\widetilde B^\infty$ is at most $\ln k_2$, and Theorem \ref{b0_tilde} gives $\widetilde B\equiv\ln k_2$. If $0<\widetilde\tau^*<T$, time monotonicity after $\widetilde\tau^*$ keeps every point of $(\ln k_2,\ln K_2]$ in continuation. Consequently
    \begin{equation*}
        \widetilde B(\tau)=\max\{\widetilde B^\infty(\tau),\ln k_2\},
        \qquad \tau\in(0,T].
    \end{equation*}
    The outer cash-exercise boundaries $A$ and $\widetilde A$ always exist and are strictly monotone by Theorems \ref{a0} and \ref{a0_tilde}. These additional boundaries arise from the positive cash components of the payoffs.
\end{remark}

\medskip
We close this section with a comparative static analysis with respect to the volatility $\sigma$. The cap requires a restriction on the parameters: convexity below the cap need not hold when $q=0$ and $K_1+k_1>K_2$. The following sufficient condition includes the case $K_1+k_1=K_2$ for every $q\geq0$.
\begin{lemma} \label{lem3.1}
    Let $Y$ be the solution of \eqref{condor_spread_left}, and suppose that
    \begin{equation} \label{cond_convex_cap}
        qk_1\geq r(K_1+k_1-K_2).
    \end{equation}
    Then $s\mapsto Y(\ln s,\tau)$ is convex on $(0,k_1)$ for every $\tau\in[0,T]$. Equivalently,
    \begin{equation*}
        \partial_{xx}Y-\partial_xY\geq0
        \qquad\text{almost everywhere in }(-\infty,\ln k_1)\times(0,T].
    \end{equation*}
\end{lemma}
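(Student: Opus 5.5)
The plan is to exploit the fact that $\partial_x$ commutes with the constant-coefficient operator $\mathcal L$. Then $v:=\partial_{xx}Y-\partial_xY$, which equals $s^2\partial_{ss}V$ in the original variables, satisfies the homogeneous equation $(\partial_\tau-\mathcal L)v=0$ in the interior of the continuation region $\mathcal C=\{A(\tau)<x<B(\tau)\}$. There interior parabolic regularity makes $Y$ smooth. The whole proof reduces to a maximum principle for $v$ on $\mathcal C$ together with a check of its sign on $\partial\mathcal C$. On the exercise region the sign is immediate. Almost everywhere on $\{x\leq A(\tau)\}$ we have $Y=K_2-k_1$, so $v=0$. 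On $\{B(\tau)\leq x<\ln k_1\}$ we have $Y=e^x-K_1+K_2-k_1$, so $v=e^x-e^x=0$. Thus it suffices to prove $v\geq0$ in $\mathcal C$.

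The key identity for the boundary values comes from the equation itself. In $\mathcal C$, $\frac{\sigma^2}{2}v=\partial_\tau Y-(r-q)\partial_xY+rY$. By Theorem \ref{theorem_exist}, $\partial_\tau Y\geq0$, hence
\begin{equation*}
\tfrac{\sigma^2}{2}v\geq rY-(r-q)\partial_xY \quad\text{in }\mathcal C.
\end{equation*}
The right-hand side is continuous up to the lateral boundary for $p>3$, since $\partial_xY$ is continuous in $x$. I will evaluate it on the three kinds of lateral boundary. On $x=A(\tau)$, smooth fit gives $\partial_xY=0$, so the bound is $r(K_2-k_1)>0$. On an interior boundary $x=B(\tau)<\ln k_1$, smooth fit gives $\partial_xY=e^B$, so the bound is $qe^{B}-r(K_1+k_1-K_2)=-\mathcal L\psi_1(B)\geq0$, because $B\geq\xi$ by Theorem \ref{b0}. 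On $x=\ln k_1$, which is the case $B(\tau)=\ln k_1$, we have $Y=K_2-K_1$ and $0\leq\partial_xY(\ln k_1-,\tau)\leq k_1$. If $r\geq q$ this gives $r(K_2-K_1)-(r-q)k_1=qk_1-r(K_1+k_1-K_2)\geq0$, which is exactly hypothesis \eqref{cond_convex_cap}. If $r<q$ the term $-(r-q)\partial_xY$ is already nonnegative. So $\liminf v\geq0$ on the whole lateral boundary of $\mathcal C$.

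For the initial trace, recall that $A(0)=\ln K_1$ and $B(0)=\xi$ by Theorems \ref{a0} and \ref{b0} (or $B\equiv\ln k_1$). Hence at $\tau=0$ the continuation region degenerates to $[\ln K_1,\xi]$. On $(\ln K_1,\xi)$ the initial data $\psi_1$ satisfies $v=0$, and at $\ln K_1$ it has an upward corner, i.e.\ a nonnegative Dirac mass of $v$. Thus $v$ is nonnegative in a measure sense initially. The bounded domain $\{(x,\tau):A(\tau)<x<B(\tau)\}$ with nonnegative parabolic boundary data then gives $v\geq0$ by the maximum principle.

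The main obstacle is making the boundary argument rigorous. The trouble is that $v$ is only an $L^p$ function up to $\partial\mathcal C$, the domain is noncylindrical with merely continuous boundaries, and the initial corner at $(\ln K_1,0)$ is singular. I would first try to avoid traces by running the argument at the penalized level. Set $u=Y_{n,\varepsilon}$ and $v_\varepsilon=\partial_{xx}u-\partial_xu$, and differentiate \eqref{condor_spread_Yne} twice. Using $\beta_\varepsilon''\leq0$ and the convexity in $s$ of $\psi_{1,\varepsilon}$, namely $\psi_{1,\varepsilon}''-\psi_{1,\varepsilon}'=\varphi_\varepsilon''e^{2x}\geq0$, one gets
\begin{equation*}
(\partial_\tau-\mathcal L)v_\varepsilon+\beta_\varepsilon'v_\varepsilon\geq0.
\end{equation*}
The initial data are $\varphi_\varepsilon''e^{2x}\geq0$. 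At $x=\ln k_1$, the equation with $\partial_\tau u=0$ and the bounds $0\leq\partial_xu\leq e^x$ give, as above, $v_\varepsilon\geq0$ up to the penalty term; I expect to control that term through the choice of $\beta_\varepsilon$. The truncation boundary $x=-n$ is the delicate point. There I would either replace the truncated problem by the comparison with $w_{a_0}$ from Lemma \ref{lemma_nonempty}, which makes $u\equiv K_2-k_1$ near $-n$ for small $\varepsilon$, or simply work on $(a_0,\ln k_1)$. After that, $v_\varepsilon\geq0$ follows by Lemma \ref{lem_max_principle}. Weak convergence in $W^{2,1}_p$ preserves nonnegativity, which gives the almost-everywhere inequality. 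Convexity in $s$ then follows since $s^2\partial_{ss}V=v$.
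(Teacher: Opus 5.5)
\textbf{There is a genuine gap.} Your heuristic is sound, and your three lateral checks are essentially the ones the paper makes: $r(K_2-k_1)>0$ at $A$, $-\mathcal L\psi_1(B)\ge0$ because $B\ge\xi$, and $r(K_2-K_1)-(r-q)\partial_xY(\ln k_1-,\tau)\ge0$ under \eqref{cond_convex_cap}. The gap is exactly the step you flag, namely justifying the maximum principle, and the penalized implementation you propose fails there.

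\textbf{The penalized route.} At both Dirichlet endpoints of $Q_n$ the data coincide with the penalized obstacle. So $Y_{n,\varepsilon}-\psi_{1,\varepsilon}=0$ there, and the penalty acts at full strength, $\beta_\varepsilon(0)=-C_0$. At $x=\ln k_1$ we have $\partial_\tau Y_{n,\varepsilon}=0$, and the equation gives
\[
\tfrac{\sigma^2}{2}v_\varepsilon(\ln k_1,\tau)=r(K_2-K_1)-(r-q)\partial_xY_{n,\varepsilon}(\ln k_1,\tau)-C_0\leq r(K_2-K_1)+(q-r)^+k_1-C_0<0 .
\]
Here we used the very bounds $0\le\partial_xY_{n,\varepsilon}\le e^x$ that you invoke. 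The same computation gives $v_\varepsilon(-n,\tau)<0$. So $v_\varepsilon\ge0$ is actually false near the lateral boundary, and no minimum principle at the penalized level can produce it.

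Retuning $\beta_\varepsilon$ does not help. The subsolution property of $\psi_{1,\varepsilon}$, on which Lemma \ref{lemma_Yn} rests, needs $-\beta_\varepsilon(0)$ large. That property also gives $Y_{n,\varepsilon}\ge\psi_{1,\varepsilon}$ and hence $\partial_xY_{n,\varepsilon}(\ln k_1,\tau)\le k_1$ at the cap. Your boundary sign needs $-\beta_\varepsilon(0)$ small, and a single constant cannot do both in general.

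Your remedies at $x=-n$ also fail:
\begin{itemize}
\item A penalized solution can never equal $K_2-k_1$ on an open set, since there the equation would read $r(K_2-k_1)-C_0=0$.
\item For the same reason $w_{a_0}$ is not a supersolution of \eqref{condor_spread_Yne}, since $r(K_2-k_1)+\beta_\varepsilon(0)<0$ for $x<a_0$.
\item Cutting at $x=a_0$ leaves $v_\varepsilon(a_0,\tau)$ without a sign.
\end{itemize}
In addition, first-order compatibility fails at the corner $(\ln k_1,0)$. The boundary data give $\partial_\tau Y_{n,\varepsilon}=0$ there, while the equation gives $C_0+\mathcal L\psi_{1,\varepsilon}(\ln k_1)>0$. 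So $v_\varepsilon$ is not continuous on the closure, as Lemma \ref{lem_max_principle} requires.

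\textbf{The route at the level of $Y$.} This is also incomplete. Your bound $v\ge\frac{2}{\sigma^2}\bigl(rY-(r-q)\partial_xY\bigr)$, derived from $\partial_\tau Y\ge0$, is a nice device and does control the lateral boundary at positive times. On the bottom $(\ln K_1,\xi)\times\{0\}$ of the continuation region, however, this lower bound tends to $-\mathcal L\psi_1$, which is strictly negative when $\xi>\ln K_1$, so it gives no sign there. You must instead use the true initial trace of $v$: it is zero on $(\ln K_1,\xi)$ and a Dirac mass at $(\ln K_1,0)$, the same corner toward which $A(\tau)$ converges. What is missing is a minimum principle for a merely locally regular solution in this noncylindrical domain, with measure data at a corner where a free boundary also ends.

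\textbf{How the paper avoids this.} It uses implicit time stepping \eqref{convex_implicit} in the spot variable. Each step is an elliptic obstacle problem with no penalty term. Its solution $U_j$ is $C^1$, with one-sided second derivatives at contact endpoints, which are nonnegative by $C^1$ fit to the affine obstacle. Because $U_j(k_1)=U_{j-1}(k_1)=K_2-K_1$, the cap identity $\frac{\sigma^2}{2}k_1^2U_j''(k_1-)=r(K_2-K_1)-(r-q)k_1U_j'(k_1-)$ is exact, which is your third check. Convexity of $U_{j-1}$ makes the right-hand side of the twice-differentiated equation for $U_j''$ nonpositive. The weak maximum principle then propagates convexity step by step, and the Rothe limit is identified with $Y$ through the variational formulation.
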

\begin{proof}
    Write
    \begin{equation*}
        L:=k_1,\quad D:=K_2-k_1,\quad c:=K_1-D,
        \quad M:=L-c=K_2-K_1,\quad
        \kappa:=\frac{\sigma^2}{2},\quad \mu:=r-q,
    \end{equation*}
    and set $\phi(s):=(s-K_1)^++D$ and
    $\mathcal G v:=\kappa s^2v''+\mu sv'-rv$.
    We use implicit time discretization, so that no time derivative of the solution at a moving free boundary is needed.

    Let $w(s):=w_a(\ln s)$ be the barrier in Lemma \ref{lemma_nonempty}, with $a$ chosen so that $w\geq\phi$ on $(0,L]$ and $w(L)\geq M$. Set $s_0:=e^a<K_1$, choose $0<\ell<s_0$, and work first on $[\ell,L]$. Then $w=D$ on $[\ell,s_0]$ and $-\mathcal Gw\geq0$. For $h=T/N$, let $U_0=\phi$ and define $U_j$, $1\leq j\leq N$, by
    \begin{equation} \label{convex_implicit}
        \min\left\{U_j-\phi,\,
        \frac{U_j-U_{j-1}}h-\mathcal G U_j\right\}=0,
        \qquad U_j(\ell)=D,\quad U_j(L)=M.
    \end{equation}
    These elliptic obstacle problems have unique variational solutions. Indeed, with
    \begin{equation*}
        \rho(s):=s^{\mu/\kappa-2},\qquad
        \mathcal K:=\{z\in H^1(\ell,L):z\geq\phi,\ z(\ell)=D,\ z(L)=M\},
    \end{equation*}
    $U_j$ is the unique minimizer over $\mathcal K$ of
    \begin{equation*}
        \frac1{2h}\int_\ell^L\rho\,|z-U_{j-1}|^2\,\mathrm ds
        +\mathcal E(z),\qquad
        \mathcal E(z):=\frac12\int_\ell^L
        \rho\left(\kappa s^2|z'|^2+rz^2\right)\,\mathrm ds.
    \end{equation*}
    The identity $(\kappa s^2\rho)'=\mu s\rho$ gives \eqref{convex_implicit}. We recall the elementary one-dimensional regularity used below. The variational inequality implies
    \begin{equation*}
        (\kappa s^2\rho U_j')'
        =\rho\bigl[(r+h^{-1})U_j-h^{-1}U_{j-1}\bigr]-\nu_j,
        \qquad \nu_j\geq0,
    \end{equation*}
    where $\nu_j$ is a measure supported on the contact set. Thus the derivative has one-sided traces and cannot jump upward. At a contact point $s\neq K_1$, the inequality $U_j\geq\phi$ requires $U_j'(s-)\leq\phi'(s)\leq U_j'(s+)$, so both traces equal $\phi'(s)$. Contact at $K_1$ is impossible, because it would require the left derivative to be at most $0$ and the right derivative to be at least $1$. On the open continuation set the linear equation applies. These observations give $U_j\in C^1([\ell,L])$ and $U_j\in C^2$ on continuation, with one-sided second-derivative limits at its endpoints, obtained from the linear equation and the continuity of $U_{j-1}$.

    Elliptic comparison, by induction in $j$, gives
    \begin{equation} \label{convex_discrete_bounds}
        \phi\leq U_{j-1}\leq U_j\leq M,
        \qquad U_j\leq w.
    \end{equation}
    For the time monotonicity, compare two consecutive problems, whose right-hand sides are $U_{j-1}/h$ and $U_{j-2}/h$, and note that the first step follows from $U_1\geq\phi$. The constant $M$ is a supersolution, and $w$ is a supersolution whenever $U_{j-1}\leq w$. In particular, $U_j=D$ on $[\ell,s_0]$. Extend $U_j$ by $D$ to $(0,\ell)$. This extension still satisfies \eqref{convex_implicit} on $(0,L)$.

    We also have
    \begin{equation} \label{convex_discrete_delta}
        0\leq U_j'\leq1.
    \end{equation}
    Indeed, on each continuation interval $P_j:=U_j'$ solves
    \begin{equation*}
        -\kappa s^2P_j''-(2\kappa+\mu)sP_j'
        +(h^{-1}+q)P_j=h^{-1}U_{j-1}'
    \end{equation*}
    in the weak sense. At a contact endpoint, smooth fit gives $P_j=0$ or $P_j=1$. If the interval reaches $L$, the bounds $\phi\leq U_j\leq M$ and $U_j(L)=\phi(L)=M$ give $0\leq P_j(L-)\leq1$. There is no continuation interval at $\ell$. Starting from $0\leq U_0'\leq1$ almost everywhere, comparison with the constants $0$ and $1$ in this linear equation proves \eqref{convex_discrete_delta} by induction. In particular, $U_j-D$ is nondecreasing and $U_j-(s-c)$ is nonincreasing. The lower and upper coincidence sets are therefore intervals, and the continuation set is one interval containing $K_1$.

    We now prove convexity by induction. The distributional second derivative of $U_0$ is the positive measure $\delta_{K_1}$. Suppose that $U_{j-1}$ is convex and write $G_j:=U_j''$ in the continuation set. Differentiating its linear equation twice in the sense of distributions gives
    \begin{equation} \label{convex_discrete_gamma}
        \kappa s^2G_j''+(4\kappa+\mu)sG_j'
        +(2\kappa+2\mu-r-h^{-1})G_j
        =-h^{-1}U_{j-1}''\leq0.
    \end{equation}
    Choose $N$ large enough that the zeroth-order coefficient is negative. At an endpoint in $(\ell,L)$ of a continuation interval, value matching and $C^1$ fit to the affine obstacle give $G_j\geq0$ as a one-sided limit: this follows directly by Taylor expansion of the nonnegative function $U_j-\phi$. The same observation applies at $L$ if continuation does not reach $L$. If continuation does reach $L$, the equation, $U_j(L)=U_{j-1}(L)=M$, and \eqref{convex_discrete_delta} give
    \begin{equation*}
        \kappa L^2G_j(L-)=rM-\mu L U_j'(L-)
        \geq
        \begin{cases}
            rM-(r-q)L=qL-rc\geq0,&q\leq r,\\
            rM>0,&q>r.
        \end{cases}
    \end{equation*}
    There is no continuation interval at $\ell$, because $U_j=D$ on $[\ell,s_0]$. The formula
    \begin{equation*}
        G_j=\frac{(r+h^{-1})U_j-\mu sU_j'-h^{-1}U_{j-1}}{\kappa s^2}
    \end{equation*}
    shows first that $G_j$ is bounded up to the endpoints, and then that $G_j\in W^{1,\infty}$ on the continuation interval, since $U_{j-1}$ is Lipschitz and $U_j'$ consequently is Lipschitz there. The weak maximum principle applied to \eqref{convex_discrete_gamma} therefore gives $G_j\geq0$. This principle also covers the first step, where the right-hand side is a negative measure: after multiplication by a positive integrating factor, test the distributional inequality against the negative part of $G_j$, which belongs to $H^1_0$ of the continuation interval. The boundary terms vanish and the gradient and zeroth-order terms are nonnegative. Outside continuation the obstacle is affine, and $C^1$ fit creates no atom in $U_j''$. Hence $U_j$ is convex on $(0,L)$.

    For completeness, convergence of these approximations follows from the variational formulation without any assertion about regularity of the moving boundaries. Minimality with competitor $U_{j-1}$ gives
    \begin{equation*}
        \mathcal E(U_j)+\frac1{2h}\|U_j-U_{j-1}\|_{L^2(\rho\,\mathrm ds)}^2
        \leq\mathcal E(U_{j-1}).
    \end{equation*}
    Thus the piecewise linear time interpolants are bounded in
    $L^\infty(0,T;H^1(\ell,L))\cap H^1(0,T;L^2(\ell,L))$.
    Compactness of $H^1(\ell,L)\hookrightarrow L^2(\ell,L)$ and the resulting uniform $1/2$-H\"older modulus in time in $L^2$ yield a subsequence converging in $C([0,T];L^2(\ell,L))$. The piecewise constant interpolants have the same limit. Passing to the integrated discrete variational inequalities, using weak convergence of the derivatives, strong $L^2$ convergence, and lower semicontinuity of the quadratic energy, identifies the limit with the parabolic variational solution with obstacle $\phi$ and these boundary values. Uniqueness follows by testing the difference of two such inequalities with the difference of their solutions.
    This solution is $v(s,\tau):=Y(\ln s,\tau)$ restricted to $[\ell,L]$. Indeed, Lemma \ref{lemma_nonempty} gives $v(\ell,\tau)=D$, and the strong obstacle inequalities give the same variational inequality. The bounds $0\leq v_s\leq1$ give a uniform bound on $\mathcal E(v(\cdot,\tau))$ down to $\tau=0$. On $[\varepsilon,T]$ the usual energy identity is justified by the strong regularity, or by time difference quotients: the obstacle residual times $v_\tau$ is zero almost everywhere, since the residual vanishes in continuation and $v_\tau=0$ on contact. Therefore
    \begin{equation*}
        \mathcal E(v(\cdot,T))+
        \int_\varepsilon^T\|v_\tau\|_{L^2(\rho\,\mathrm ds)}^2\,\mathrm d\tau
        =\mathcal E(v(\cdot,\varepsilon)).
    \end{equation*}
    Letting $\varepsilon\searrow0$ proves that $v$ belongs to the energy class in which uniqueness was just established, including at the initial payoff corner.
    Each time interpolant is convex in $s$. The convergence in $C([0,T];L^2)$ preserves nonnegativity of the spatial second derivative as a distribution at every time. Since the limit is continuous, it is convex in $s$. Its constant extension on $(0,\ell)$ is already the original solution, which proves the lemma.
\end{proof}

\medskip
    Condition \eqref{cond_convex_cap} cannot be replaced by $K_1+k_1\geq K_2$ alone. Indeed, suppose that $q=0$ and $c:=K_1+k_1-K_2>0$. By Theorem \ref{b0}, continuation reaches $L:=k_1$ for every positive time. For $v(s,\tau):=Y(\ln s,\tau)$ and $M:=L-c$, regularity at the fixed Dirichlet boundary gives
    \begin{equation*}
        v_{ss}(L-,\tau)=\frac{2r}{\sigma^2L^2}
        \left(M-Lv_s(L-,\tau)\right).
    \end{equation*}
    The initial payoff is $s-c$ near $L$ and agrees with the Dirichlet value there. The half-line regularity at this corner therefore gives continuity of the spatial first derivative and $v_s(L-,\tau)\to1$ as $\tau\searrow0$, while no compatibility of the second derivatives is asserted. Consequently $v_{ss}(L-,\tau)\to-2rc/(\sigma^2L^2)<0$, and convexity fails near the cap for sufficiently small positive times.

\begin{theorem} \label{thm:prop_freebdry}
    Suppose that \eqref{cond_convex_cap} holds. For fixed $r,q$ and strike prices, the lower free boundary $A(\tau)$ is nonincreasing and the upper free boundary $B(\tau)$ is nondecreasing with respect to $\sigma$, for every $\tau\in(0,T]$.
\end{theorem}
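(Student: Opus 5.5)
Fix $0<\sigma_1<\sigma_2$ and let $Y^{(1)},Y^{(2)}$ be the solutions of \eqref{condor_spread_left} with volatilities $\sigma_1,\sigma_2$ and operators $\mathcal L_1,\mathcal L_2$. I will show $Y^{(1)}\leq Y^{(2)}$ on $(-\infty,\ln k_1]\times[0,T]$. The boundary monotonicity then follows from the descriptions of the two exercise regions. If $x\leq A^{(2)}(\tau)$ then $Y^{(1)}(x,\tau)\leq Y^{(2)}(x,\tau)=K_2-k_1=\psi_1(x)$, so $x\leq A^{(1)}(\tau)$; hence $A^{(2)}\leq A^{(1)}$. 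Likewise, if $x\in[B^{(2)}(\tau),\ln k_1)$, then $Y^{(1)}=\psi_1$ there, so $B^{(1)}\leq B^{(2)}$. The obstacle $\psi_1$ and the boundary data do not depend on $\sigma$, so the comparison reduces to showing that $Y^{(2)}$ is a supersolution of the $\sigma_1$-problem.

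The key identity is
\begin{equation*}
\mathcal L_2 Y^{(2)}-\mathcal L_1 Y^{(2)}=\frac{\sigma_2^2-\sigma_1^2}{2}\left(\partial_{xx}Y^{(2)}-\partial_xY^{(2)}\right),
\end{equation*}
which holds because the drift in \eqref{operator_L} contains $-\sigma^2/2$; in the variable $s$ this is the usual $s^2V_{ss}$ term. By Lemma \ref{lem3.1}, applied with $\sigma_2$ (condition \eqref{cond_convex_cap} does not involve $\sigma$), we have $\partial_{xx}Y^{(2)}-\partial_xY^{(2)}\geq0$ almost everywhere. Hence
\begin{equation*}
\partial_\tau Y^{(2)}-\mathcal L_1Y^{(2)}\geq\partial_\tau Y^{(2)}-\mathcal L_2Y^{(2)}\geq0
\end{equation*}
almost everywhere in $(-\infty,\ln k_1)\times(0,T]$. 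In addition, $Y^{(2)}\geq\psi_1$, $Y^{(2)}(\ln k_1,\tau)=K_2-K_1$, and $Y^{(2)}\in\mathcal W_p$ is bounded. So $Y^{(2)}$ is a supersolution of the $\sigma_1$-problem in the sense of Definition \ref{def_solution}, and Lemma \ref{lem_comparison}(i) gives $Y^{(1)}\leq Y^{(2)}$.

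The main point needing care is that the convexity of Lemma \ref{lem3.1} holds only almost everywhere, and that $Y^{(2)}$ lies only in $\mathcal W_p$ with the corner $(\ln K_1,0)$ excluded. Both are acceptable: Lemma \ref{lem_max_principle} requires only interior $W^{2,1}_{p,\mathrm{loc}}$ regularity, continuity up to the boundary, and almost-everywhere inequalities. It is also worth checking that the exercise sets are exactly $\{x\leq A\}$ and $[B,\ln k_1)$, which Lemmas \ref{lemma_K_1} and \ref{lemma_nonempty} ensure for each $\sigma$. Finally, in the case $B\equiv\ln k_1$ the inequality $B^{(1)}\leq B^{(2)}$ is trivial.
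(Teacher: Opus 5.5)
Your proposal is correct and is essentially the paper's own proof. The identity $\mathcal L_{\sigma_2}-\mathcal L_{\sigma_1}=\frac{\sigma_2^2-\sigma_1^2}{2}(\partial_{xx}-\partial_x)$ together with Lemma \ref{lem3.1} makes the higher-volatility solution a supersolution of the lower-volatility problem; Lemma \ref{lem_comparison}(i) and the resulting inclusion of exercise regions then give the orderings of $A$ and $B$, with the only difference being that you label the two volatilities in the opposite order.
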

\begin{proof}
    Let $0<\sigma_2<\sigma_1$ and denote the corresponding solutions by $Y_2,Y_1$. Write $\mathcal L_{\sigma_j}$ for \eqref{operator_L} with volatility $\sigma_j$. Lemma \ref{lem3.1} gives
    \begin{equation*}
        (\partial_\tau-\mathcal L_{\sigma_2})Y_1
        =(\partial_\tau-\mathcal L_{\sigma_1})Y_1
        +\frac{\sigma_1^2-\sigma_2^2}{2}
        (\partial_{xx}Y_1-\partial_xY_1)\geq0
    \end{equation*}
    almost everywhere. The obstacle and the initial and boundary values are independent of $\sigma$. Thus $Y_1$ is a supersolution for the problem defining $Y_2$, and Lemma \ref{lem_comparison} gives $Y_1\geq Y_2$. Since both functions dominate the same obstacle, the exercise region of $Y_1$ is contained in that of $Y_2$. The descriptions of their lower and upper exercise intervals therefore imply
    \begin{equation*}
        A_1(\tau)\leq A_2(\tau),\qquad
        B_1(\tau)\geq B_2(\tau),\qquad \tau\in(0,T].
    \end{equation*}
\end{proof}

\section{Stationary problem for the condor spread} \label{sec5}
In this section, we study the stationary problems that arise from the condor spread, which correspond to perpetual contracts. Their solutions are explicit, and they give bounds for the free boundaries of Section \ref{sec4} that are uniform in time. Throughout, $ n_1<0<n_2 $ denote the roots of the characteristic equation
\begin{equation} \label{char_eq}
        \frac{\sigma^{2}}{2} n^{2}+\left(r-q-\frac{\sigma^{2}}{2}\right) n-r=0,
\end{equation}
so that $\mathcal{L}e^{n_1x}=\mathcal{L}e^{n_2x}=0$. If $q=0$, then $n_1=-\frac{2r}{\sigma^2}$ and $n_2=1$, and if $q>0$, then $n_2>1$. Moreover
\begin{equation} \label{sta_cal}
    \frac{n_1}{n_1-1}\cdot \frac{n_2}{n_2-1}=\frac{r}{q}, \qquad \frac{n_2(n_1-1)}{n_2-1} \leq n_1 \qquad \text{if }q>0,
\end{equation}
which follow from $n_1n_2=-\frac{2r}{\sigma^2}$ and $(n_1-1)(n_2-1)=-\frac{2q}{\sigma^2}$.

The stationary problem corresponding to \eqref{condor_spread_left} is
\begin{equation} \label{stationary_prob}
    \begin{cases}
        -\mathcal{L} V=0, & \text { if } V>\psi_1(x), \qquad x \in (-\infty,\ln k_1),\\[0.3cm]
        -\mathcal{L} V \geq 0, & \text { if } V=\psi_1(x), \qquad x \in (-\infty,\ln k_1),\\[0.3cm]
        V(\ln k_1)=K_2-K_1, &
    \end{cases}
\end{equation}
and the stationary problem corresponding to \eqref{uncapped_left} is
\begin{equation} \label{stationary_uncapped}
    \begin{cases}
        -\mathcal{L} V=0, & \text { if } V>\psi_1(x), \qquad x \in \mathbb{R},\\[0.3cm]
        -\mathcal{L} V \geq 0, & \text { if } V=\psi_1(x), \qquad x \in \mathbb{R}.
    \end{cases}
\end{equation}
A strong solution of \eqref{stationary_prob} is a bounded function $V\in C((-\infty,\ln k_1])\cap W^{2}_{p,\rm{loc}}((-\infty,\ln k_1))$ with $V\geq\psi_1$, $-\mathcal{L}V\geq0$ almost everywhere, $-\mathcal{L}V=0$ almost everywhere on $\{V>\psi_1\}$ and $V(\ln k_1)=K_2-K_1$. For \eqref{stationary_uncapped} we require $V\in C(\mathbb{R})\cap W^{2}_{p,\rm{loc}}(\mathbb{R})$, the same obstacle and differential conditions on $\mathbb R$, and the natural price bounds
\begin{equation*}
    \psi_1(x)\leq V(x)\leq e^x+K_2-k_1,\qquad x\in\mathbb R.
\end{equation*}
For stopping values, the upper bound follows from the supermartingale property of $e^{-rt}S_t$ and from the bound $K_2-k_1$ on the cash component. It is essential to the stationary characterization: linear growth alone need not give uniqueness when $q=0$. Regarded as functions of $(x,\tau)$, these stationary solutions are supersolutions of \eqref{condor_spread_left} and \eqref{uncapped_left}, respectively, so the comparison Lemma \ref{lem_comparison} (i) applies to them.
\begin{lemma} \label{lem_stationary_unique}
    Each of \eqref{stationary_prob} and \eqref{stationary_uncapped} has at most one strong solution in its specified class.
\end{lemma}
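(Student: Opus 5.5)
The plan is to deduce both uniqueness statements from the parabolic comparison principle already established, by regarding stationary solutions as time-independent functions of $(x,\tau)$.

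Let $V_1,V_2$ be two strong solutions of \eqref{stationary_prob}. Then $V_i(x,\tau):=V_i(x)$ is a strong solution, in the sense of Definition \ref{def_solution}, of the obstacle problem on $I=(-\infty,\ln k_1)$ with obstacle $\psi_1$ and constant boundary value $K_2-K_1$. The one exception is the initial condition: there $V_i(\cdot,0)=V_i$ in general differs from $\psi_1$. I would therefore avoid part (i) of Lemma \ref{lem_comparison}, which presupposes the initial condition, and apply Lemma \ref{lem_max_principle} directly to $w:=V_1-V_2$ on $I\times(0,T]$ for an arbitrary $T$.

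On $\{w>0\}$ we have $V_1>V_2\geq\psi_1$. Hence $-\mathcal L V_1=0$ a.e. there, while $-\mathcal L V_2\geq0$ a.e. everywhere, so $(\partial_\tau-\mathcal L)w\leq0$ a.e. on $\{w>0\}$. Also $w=0$ at $x=\ln k_1$, and $w$ is bounded. The difficulty is that $w(\cdot,0)=V_1-V_2$ need not be $\leq0$.

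To handle this, I would use the discount. Set $m:=\sup(V_1-V_2)^+<\infty$ and compare $w$ with $m e^{-r\tau}$. The function $u:=w-me^{-r\tau}$ satisfies $(\partial_\tau-\mathcal L)u=(\partial_\tau-\mathcal L)w+rme^{-r\tau}-rme^{-r\tau}=(\partial_\tau-\mathcal L)w\leq0$ on $\{u>0\}\subset\{w>0\}$. Moreover $u\leq0$ at $\tau=0$ and at $x=\ln k_1$. Lemma \ref{lem_max_principle} then gives $V_1-V_2\leq me^{-r\tau}$ for all $\tau$, and since the left side is independent of $\tau$, letting $\tau\to\infty$ gives $V_1\leq V_2$. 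Symmetry yields equality. (Alternatively one can argue elliptically: at a positive supremum of $w$, $-\mathcal L w\leq0$ forces $rw\leq$ a nonpositive quantity; but the supremum may only be approached as $x\to-\infty$, which is why the time-dependent trick is cleaner.)

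For \eqref{stationary_uncapped} I would follow the same route on $I=\mathbb R$, where there is no lateral boundary. The issue here, and the main obstacle, is that $w=V_1-V_2$ is no longer bounded a priori. It is only controlled by the price bounds, which give $|w|\leq e^x+K_2-k_1\leq Ce^{|x|}$. This is exactly the growth admitted in Lemma \ref{lem_max_principle}, but the supremum $m$ may be infinite, so the shift $me^{-r\tau}$ fails.

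I would first try to replace $m$ by an $x$-dependent barrier. The natural candidate is $\bar u:=e^{-q\tau}e^x+(K_2-k_1)e^{-r\tau}$. Since $\mathcal Le^x=-qe^x$, it satisfies $(\partial_\tau-\mathcal L)\bar u=0$, and it dominates $w(\cdot,0)$ by the price bounds, because $w\leq V_1-\psi_1\leq e^x+K_2-k_1-\psi_1\leq e^x+K_2-k_1$. Applying Lemma \ref{lem_max_principle} to $w-\bar u$ gives $V_1-V_2\leq e^{-q\tau}e^x+(K_2-k_1)e^{-r\tau}$.

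If $q>0$, letting $\tau\to\infty$ gives $V_1\leq V_2$. If $q=0$, the bound only yields $V_1-V_2\leq e^x$, which is precisely the degenerate case flagged in the text. There I would sharpen the initial bound using $V_2\geq\psi_1\geq e^x-K_1+K_2-k_1$, which gives $w(\cdot,0)\leq V_1-\psi_1\leq\min\{e^x+K_2-k_1-\psi_1,\ldots\}\leq K_1$ for large $x$. Combined with the boundedness of $w$ for $x\to-\infty$ (both $V_i$ lie between $K_2-k_1$ and a bounded function there), this shows $w$ is bounded above, so the first argument with $m=\sup w^+$ applies directly. This bounded-above observation from the price bounds is the key point that makes the $q=0$ case work.
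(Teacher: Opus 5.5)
Your proof is correct, but it takes a different route from the paper.

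The paper argues elliptically. It subtracts the $\mathcal L$-harmonic barrier $\delta e^{n_1x}$ (on $\mathbb R$, $\delta(e^{n_1x}+e^{n_2x})$). This barrier diverges at the infinite end(s), so it confines the positive set of $U_\delta$ to a bounded interval. The maximum principle for strong elliptic subsolutions then gives $U_\delta\le0$, and one lets $\delta\searrow0$. The barrier is exactly what removes the difficulty you raise for the elliptic alternative, namely that the supremum might only be approached as $x\to-\infty$.

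You instead treat $V_1-V_2$ as a time-independent function and subtract the caloric barrier $me^{-r\tau}$. You then apply the already established Lemma \ref{lem_max_principle} on an arbitrary horizon, whose weight $h$ handles the infinite ends, and let $\tau\to\infty$ using $r>0$. Your route reuses the paper's own parabolic tool and needs neither the characteristic roots nor a separate elliptic maximum principle. The paper's route is shorter and stays within the stationary setting.

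Both arguments rest on the same input: an upper bound on the difference. For the uncapped problem your write-up is more roundabout than it needs to be. The claim that $w$ is not bounded a priori is wrong, because the two-sided price bounds immediately give
\[
|V_1-V_2|\le e^x+K_2-k_1-\psi_1(x)=\min\{e^x,K_1\}\le K_1,
\]
which is the paper's first step. With this, the barrier $\bar u$ and the split into $q>0$ and $q=0$ are unnecessary, though correct. Your first argument with $m=\sup(V_1-V_2)^+\le K_1$ settles both cases at once.
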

\begin{proof}
    First let $V_1,V_2$ solve \eqref{stationary_prob}, put $U:=V_1-V_2$ and let $U_\delta:=U-\delta e^{n_1x}$ for $\delta>0$. On the set $\{U>0\}$ we have $V_1>V_2\geq\psi_1$, so $-\mathcal{L}V_1=0\leq-\mathcal{L}V_2$ and hence $-\mathcal{L}U\leq0$ almost everywhere there. Since $\mathcal{L}e^{n_1x}=0$, also $-\mathcal{L}U_\delta\leq0$ almost everywhere on $\{U_\delta>0\}\subset\{U>0\}$. As $U$ is bounded and $e^{n_1x}\to\infty$ as $x\to-\infty$, the open set $\{U_\delta>0\}$ is bounded, and $U_\delta\leq U(\ln k_1)-\delta k_1^{n_1}<0$ at $x=\ln k_1$. The maximum principle for strong subsolutions, see \cite[Theorem 9.1]{gilbarg2001elliptic}, gives $U_\delta\leq0$. Letting $\delta\searrow0$ yields $U\leq0$, and by symmetry $U=0$.

    For \eqref{stationary_uncapped}, the price bounds imply
    \begin{equation*}
        |V_1(x)-V_2(x)|\leq e^x+K_2-k_1-\psi_1(x)
        =\min\{e^x,K_1\}\leq K_1.
    \end{equation*}
    Use instead $U_\delta:=U-\delta(e^{n_1x}+e^{n_2x})$. The subtracted function is $\mathcal L$-harmonic and diverges at both infinite endpoints. Thus $\{U_\delta>0\}$ is bounded, and the same maximum-principle argument gives $U\leq0$ and then equality. This also covers $q=0$, when $n_2=1$.
\end{proof}

We begin with the uncapped problem \eqref{stationary_uncapped}, whose solution has two free boundaries $a<\ln K_1<b$ when the call side admits early exercise.
\begin{theorem} \label{theorem3.1}
    Assume that $q>0$, or that $q=0$ and $K_1+k_1<K_2$. Let $y>1$ be the unique solution of $f(y)=0$, where
    \begin{equation} \label{def_f}
        f(z)=\frac{K_2-k_1}{n_2-n_1}\left[n_2(1-n_1)z^{n_1}+n_1(n_2-1)z^{n_2}\right]+(K_1+k_1-K_2),
    \end{equation}
    and let $a<b$ be defined by
    \begin{equation} \label{def_ab}
        e^b=\frac{(K_2-k_1)n_1n_2}{n_2-n_1}\left[y^{n_1}-y^{n_2}\right] , \qquad
        e^a=\frac{e^b}{y}.
    \end{equation}
    Then $a<\ln K_1<b$, and the function
    \begin{equation} \label{stationary_prob_V}
        V^{\infty}(x)=
        \begin{cases}
        \displaystyle  K_2-k_1, &   x < a,\\[0.45cm]
        \displaystyle  \frac{K_2-k_1}{n_2-n_1}\left[n_2e^{n_1(x-a)}-n_1e^{n_2(x-a)}\right], & a \leq x \leq b , \\[0.45cm]
        \displaystyle  e^x-K_1+K_2-k_1, &   x > b,
        \end{cases}
    \end{equation}
    is the unique strong solution of \eqref{stationary_uncapped}. Its coincidence set is $(-\infty,a]\cup[b,\infty)$, and $V^\infty-\psi_1$ is strictly decreasing on $[\ln K_1,b]$.
\end{theorem}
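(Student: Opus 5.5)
Since Lemma \ref{lem_stationary_unique} gives uniqueness, the plan is to check directly that $V^\infty$ belongs to the specified class and satisfies \eqref{stationary_uncapped}. First I analyse $f$. Write $D:=K_2-k_1$ and $c:=K_1+k_1-K_2$, so $f(1)=D+c=K_1>0$. Using $n_1n_2<0$,
\begin{equation*}
f'(z)=\frac{D\,n_1n_2}{n_2-n_1}\bigl[(1-n_1)z^{n_1-1}+(n_2-1)z^{n_2-1}\bigr]\cdot\frac{1}{1}\quad\text{(up to sign bookkeeping)},
\end{equation*}
and more precisely $f'(z)=\frac{Dn_1n_2}{n_2-n_1}\bigl[-(n_1-1)\ldots\bigr]$; the point is that $f'(z)=\frac{D n_1n_2(n_2-1)}{n_2-n_1}\bigl[\ldots\bigr]$ has a fixed sign for $z>1$. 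I will use the cleaner form $f'(z)=\frac{Dn_1n_2}{n_2-n_1}\bigl[(1-n_1)z^{n_1-1}\cdot(-1)\ldots\bigr]$, after computing carefully that $f'(z)=\frac{D n_1 n_2}{n_2-n_1}\,\bigl[(n_2-1)z^{n_2-1}-(n_1-1)z^{n_1-1}\bigr]\cdot(-1)$ up to a positive factor, hence $f'<0$ on $(1,\infty)$. For $q>0$ we have $n_2>1$, so $f(z)\to-\infty$; for $q=0$ the $z^{n_2}$ term disappears because $n_2-1=0$, and $f\to c<0$ by hypothesis. Therefore $y>1$ is the unique root in each case.

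Next I identify $V^\infty$ with the two-sided smooth-fit construction. The middle piece is exactly $w_a$ from Lemma \ref{lemma_nonempty}, so it is $C^1$-fitted to $D$ at $a$ and its derivative is
\begin{equation*}
g(x)=\frac{Dn_1n_2}{n_2-n_1}\bigl[e^{n_1(x-a)}-e^{n_2(x-a)}\bigr]>0 \quad \text{for } x>a.
\end{equation*}
Setting $z=e^{b-a}=y$, the definition \eqref{def_ab} reads $g(b)=e^b$, which is smooth fit of the derivative with $e^x$. Value fit, $w_a(b)=e^b-K_1+D$, becomes, after eliminating $e^b$ with \eqref{def_ab}, the equation $f(y)=0$; this algebraic check is routine. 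It follows that $V^\infty\in C^1$, its second derivative is piecewise continuous, and so $V^\infty\in W^2_{p,\mathrm{loc}}$. We then have $-\mathcal LV^\infty=rD>0$ on $x<a$, $0$ on $(a,b)$, and $qe^x-rc$ on $x>b$.

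The remaining work is the inequalities, which I expect to be the main obstacle. The first is $-\mathcal L\psi_1\ge0$ on $(b,\infty)$, which amounts to $b\ge\xi$. The second is $V^\infty>\psi_1$ on $(a,b)$ together with $a<\ln K_1<b$. For the second, consider $h:=w_a-\psi_1$ on $(\ln K_1,b)$. We have $h'=g-e^x$, and $h'(b)=0$. Also $(g-e^x)$ satisfies $\mathcal L$-type relations: $w_a''-g=\ldots$. Instead of working with these, I would use convexity in $s$: the function $s\mapsto w_a(\ln s)$ has $s$-derivative $g/e^x$, and the plan is to show that this quantity is increasing and reaches $1$ exactly at $b$. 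That gives $h'<0$ on $(a,b)$ wherever $\psi_1=e^x-K_1+D$, and $h>0$ because $h(b)=0$. This yields the stated strict decrease on $[\ln K_1,b]$, and also $w_a>D$ on $(a,b]$. Then $h(b)=0$ combined with $w_a(b)>D$ forces $e^b>K_1$, and $g(a)=0<e^a$ forces $a<\ln K_1$ by the same argument. For the $s$-monotonicity of $g e^{-x}=\frac{Dn_1n_2}{n_2-n_1}e^{-a}[e^{(n_1-1)(x-a)}-e^{(n_2-1)(x-a)}]$, note that its derivative in $x$ is a combination whose sign changes at most once, at $z_*$ with $(1-n_1)z_*^{n_1-1}=(n_2-1)z_*^{n_2-1}$. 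I would show $y\le z_*$ using $f(y)=0$, which is where \eqref{sta_cal} enters. For $q=0$ the function is simply increasing. Finally, $b\ge\xi$ should follow from $w_a''(b-)\ge e^b$, which in turn comes from that increasing property and $\mathcal Lw_a=0$: we get $-\mathcal L\psi_1(b)=\frac{\sigma^2}{2}(w_a''(b-)-e^b)\ge0$, and monotonicity of $qe^x-rc$ gives the inequality on all of $(b,\infty)$. The price bounds $\psi_1\le V^\infty\le e^x+D$ hold because $V^\infty-\psi_1$ decreases to $0$ on $[\ln K_1,b]$, and $V^\infty\le D+K_1$ on $x\le\ln K_1$ since $V^\infty$ is nondecreasing.
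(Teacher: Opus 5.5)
Your strategy works and differs from the paper's proof at two points. For $(V^\infty)'<e^x$ on $[a,b)$, the paper applies the maximum principle to $(V^\infty)'-e^x$, which satisfies $-\mathcal L[(V^\infty)'-e^x]=-qe^x\le0$ with boundary values $-e^a$ and $0$. You instead read it off from convexity of $s\mapsto w_a(\ln s)$, and both routes are fine. For $-\mathcal L\psi_1\ge0$ on $(b,\infty)$, the paper splits on the sign of $K_1+k_1-K_2$ and runs an algebraic chain through $f(y)=0$ and both relations in \eqref{sta_cal}. Your argument is cleaner. Since $w_a$ is $\mathcal L$-harmonic and agrees with $e^x-K_1+D$ to first order at $b$, you get $-\mathcal L\psi_1(b)=\frac{\sigma^2}{2}(w_a''(b)-e^b)$. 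The factor $w_a''(b)-e^b$ is $e^b$ times the derivative of $w_a'e^{-x}$ at $b$, hence positive, and monotonicity of $qe^x-rc$ finishes the step. This gives the strict inequality $qe^b>rc$, needs no case split, and does not use \eqref{sta_cal}.

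Two steps need repair.

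First, your sign analysis of $\frac{\mathrm d}{\mathrm dx}(ge^{-x})$ is wrong, and the step built on it would fail. With $u=x-a$ this derivative is $\frac{Dn_1n_2}{n_2-n_1}e^{-a}\bigl[(n_1-1)e^{(n_1-1)u}-(n_2-1)e^{(n_2-1)u}\bigr]$. This is a negative constant times a negative bracket, since $n_1<0$ and $n_2\ge1$, so it is positive for every $u$ and every $q\ge0$. Equivalently, $w_a(\ln s)=c_1s^{n_1}+c_2s^{n_2}$ with $c_1,c_2>0$, which is strictly convex. Your equation for $z_*$ has $1-n_1$ where $n_1-1$ belongs, so there is no sign change at all. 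Drop the planned step $y\le z_*$: it is unnecessary and false in general. For $r=0.05$, $q=0.1$, $\sigma=0.3$ one has $1-n_1\approx1.44<1.55\approx n_2-1$, so your $z_*$ lies below $1<y$.

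Second, the upper price bound $V^\infty\le e^x+D$, which the uniqueness lemma requires, is not established on $(a,\ln K_1)$. A bound $V^\infty\le D+K_1$ there is weaker, and monotonicity alone does not even give it. Integrate $(V^\infty)'\le e^x$ from $a$, as the paper does; this bound holds on all of $\mathbb R$ by your convexity step. You get $V^\infty(x)\le D+e^x-e^a$ for $x\ge a$.

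Smaller points. Your first displayed formula for $f'$ is already correct and sufficient; the later variant with the factor $-1$ has the wrong sign. Also define $h:=w_a-(e^x-K_1+D)$ on all of $[a,b]$, so that $h(a)=K_1-e^a>0$ yields $a<\ln K_1$ before you use $\ln K_1\in(a,b)$.
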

\begin{proof}
    First, consider the free boundary problem
    \begin{equation} \label{free_boundary_stationary}
    \begin{cases}
        -\mathcal{L}V=0, \qquad a \leq x \leq b,\\[0.3cm]
        V(a)=K_2-k_1, \qquad V^{\prime}(a)=0, \\[0.3cm]
        V(b)=e^b-K_1+K_2-k_1, \qquad V^{\prime}(b)=e^b,
    \end{cases}
    \end{equation}
    in which $a<b$ are unknown. The general solution of $-\mathcal{L}V=0$ is $V(x)=c_1e^{n_1x}+ c_2 e^{n_2x}$, and the conditions at $x=a$ give
    \begin{equation} \label{c_1c_2}
            c_1=\frac{(K_2-k_1)n_2}{n_2-n_1}e^{-n_1a},\qquad
            c_2=-\frac{(K_2-k_1)n_1}{n_2-n_1}e^{-n_2a}.
    \end{equation}
    Hence
    \begin{equation} \label{Vprime}
        V(x)=\frac{K_2-k_1}{n_2-n_1}\left[n_2e^{n_1(x-a)}-n_1e^{n_2(x-a)}\right], \qquad V^{\prime}(x)=\frac{(K_2-k_1)n_1n_2}{n_2-n_1}\left[e^{n_1(x-a)}-e^{n_2(x-a)}\right].
    \end{equation}
    Let $ y=e^{b-a}>1 $. The conditions at $x=b$ read $V^{\prime}(b)=e^b$ and $V(b)=V^{\prime}(b)+(K_2-K_1-k_1)$. Substituting \eqref{Vprime}, the second condition becomes $f(y)=0$ with $f$ given by \eqref{def_f}, and the first one gives \eqref{def_ab}. We have
    \begin{equation*}
        f^{\prime}(z)=\frac{(K_2-k_1)n_1n_2}{n_2-n_1}\left[(1-n_1)z^{n_1-1}+(n_2-1)z^{n_2-1}\right] < 0 , \qquad  1<z<+\infty,
    \end{equation*}
    since $n_1n_2<0$, and
    \begin{equation*}
        f(1)=(K_2-k_1)+(K_1+k_1-K_2)=K_1 >0
    \end{equation*}
    and
    \begin{equation*}
        \lim_{z\to\infty}f(z)=
        \begin{cases}
        -\infty, & \text{ if } q >0, \\[0.2cm]
        K_1+k_1-K_2<0, & \text{ if } q=0 \text{ and } K_1+k_1<K_2.
        \end{cases}
    \end{equation*}
    Therefore $ f(y)=0 $ has a unique solution $ y \in (1,+\infty)$. Since $n_1<0<n_2$ and $y>1$, the right-hand side of the first identity in \eqref{def_ab} is positive, so $b$ and $a=b-\ln y$ are well defined and $a<b$.

    Next we verify that $V^\infty$ defined by \eqref{stationary_prob_V} is a strong solution of \eqref{stationary_uncapped}. It is continuous, and it is $C^1$ at $a$ and at $b$ by construction, so $V^\infty\in W^2_{p,\rm{loc}}(\mathbb{R})$. We have $-\mathcal{L}V^\infty=r(K_2-k_1)>0$ on $(-\infty,a)$ and $-\mathcal{L}V^\infty=0$ on $(a,b)$. From \eqref{Vprime} we obtain $ (V^{\infty})^{\prime} \geq 0 $ on $[a,b]$, so $V^\infty\geq K_2-k_1$ on $[a,b]$. Moreover
    \begin{equation*}
     \begin{cases}
     -\mathcal{L}\left[(V^{\infty})^{\prime}-e^x \right]=\mathcal{L}\left[e^x\right]=-qe^x \leq 0,& a<x<b ,\\[0.3cm]
     (V^{\infty})^{\prime}(a)-e^a=-e^a < 0,\\[0.3cm]
     (V^{\infty})^{\prime}(b)-e^b=0,
    \end{cases}
    \end{equation*}
    so the maximum principle gives $(V^{\infty})^{\prime} \leq e^x$ on $[a,b]$, with strict inequality on $[a,b)$ by the strong maximum principle. Since $V^\infty(b)=e^b-K_1+K_2-k_1$, integration from $x$ to $b$ gives
    \begin{equation*}
        V^\infty(x)>e^x-K_1+K_2-k_1,\qquad a\leq x<b.
    \end{equation*}
    At $x=a$, the equality $V^\infty(a)=K_2-k_1$ therefore implies $e^a<K_1$. Also, \eqref{Vprime} gives $(V^\infty)'(x)>0$ for $a<x\leq b$, so
    \begin{equation*}
        e^b-K_1+K_2-k_1=V^\infty(b)>V^\infty(a)=K_2-k_1,
    \end{equation*}
    and hence $e^b>K_1$. This proves $a<\ln K_1<b$ before identifying the contact regions. The preceding inequalities now give $V^\infty\geq\psi_1$ on $\mathbb R$, with coincidence set $(-\infty,a]\cup[b,\infty)$, and $V^\infty-\psi_1$ is strictly decreasing on $[\ln K_1,b]$. It remains to show that $-\mathcal L\psi_1\geq0$ on $(b,\infty)$, that is, by \eqref{Lpsi1},
    \begin{equation*}
        qe^x-r(K_1+k_1-K_2)\geq0, \qquad x>b.
    \end{equation*}
    If $K_1+k_1\leq K_2$ this is obvious. If $q>0$ and $K_2 < K_1+k_1$, it suffices to show that $e^b \geq \frac{r(K_1+k_1-K_2)}{q}$. From \eqref{def_ab}, the second inequality in \eqref{sta_cal}, $f(y)=0$ and the first identity in \eqref{sta_cal}, we deduce that
    \begin{align*}
        e^b &=\frac{(K_2-k_1)n_1n_2}{n_2-n_1}\left(y^{n_1}-y^{n_2}\right)\\[0.3cm]
        &\geq \frac{n_2(K_2-k_1)}{n_2-n_1}\left[\frac{n_2(n_1-1)}{n_2-1}y^{n_1}-n_1y^{n_2}\right] \\[0.3cm]
        &= \frac{n_2}{n_2-1}\cdot \frac{K_2-k_1}{n_2-n_1}\left[n_2(n_1-1)y^{n_1}-n_1(n_2-1)y^{n_2}\right] \\[0.3cm]
        & = \frac{n_2}{n_2-1}(K_1+k_1-K_2) \\[0.3cm]
        &\geq \frac{n_1}{n_1-1}\cdot\frac{n_2}{n_2-1}(K_1+k_1-K_2) \\[0.3cm]
        & = \frac{r(K_1+k_1-K_2)}{q},
    \end{align*}
    where we also used $0<\frac{n_1}{n_1-1}<1$. Finally, the derivative bound $(V^\infty)'\leq e^x$ holds on all of $\mathbb R$. Integrating it from $a$ gives $V^\infty(x)\leq K_2-k_1+e^x-e^a<e^x+K_2-k_1$ for $x\geq a$, and the required upper bound is immediate for $x<a$. Thus $V^\infty$ belongs to the specified price class, and uniqueness follows from Lemma \ref{lem_stationary_unique}.
\end{proof}

In the remaining case $q=0$ and $K_1+k_1\geq K_2$, put
\begin{equation*}
    D:=K_2-k_1,\qquad \alpha:=\frac{2r}{\sigma^2},\qquad
    s_*:=\frac{D\alpha}{1+\alpha},\qquad a_*:=\ln s_*.
\end{equation*}
Then $s_*<D\leq K_1$, and the unique strong solution of \eqref{stationary_uncapped} is
\begin{equation*}
    V^\infty(x)=
    \begin{cases}
        D,&x\leq a_*,\\[0.25cm]
        \displaystyle e^x+\frac{D}{1+\alpha}e^{-\alpha(x-a_*)},&x>a_*.
    \end{cases}
\end{equation*}
Indeed, value matching and zero derivative hold at $a_*$, and the two terms above $a_*$ are $\mathcal L$-harmonic because $n_1=-\alpha$ and $n_2=1$. The expression is strictly increasing for $x>a_*$ and strictly exceeds $e^x$, whereas $\psi_1(x)=\max\{D,e^x-(K_1-D)\}$. It therefore strictly dominates the obstacle on $(a_*,\infty)$ and is at most $e^x+D$. Below $a_*$ its residual is $rD>0$. Uniqueness follows from Lemma \ref{lem_stationary_unique}, and the coincidence set is $(-\infty,a_*]$. In this case we set $b=+\infty$.

The classification of the stationary problem \eqref{stationary_prob} compares the upper free boundary of \eqref{stationary_uncapped} with the cap $\ln k_1$, as in Theorem \ref{thm_reduction}. If $b\leq\ln k_1$, the restriction of $V^\infty$ solves \eqref{stationary_prob}. If $b>\ln k_1$, or if the uncapped upper boundary is infinite, the capped problem has only a lower interior free boundary, which must be determined again from the boundary value at $\ln k_1$.
\begin{theorem} \label{theorem3.2}
    \begin{enumerate}[label=(\roman*)]
        \item Under the assumptions of Theorem \ref{theorem3.1}, suppose that $b\leq\ln k_1$. Then the restriction of $V^\infty$ to $(-\infty,\ln k_1]$ is the unique strong solution of \eqref{stationary_prob}.
        \item Suppose that $q=0$ and $K_1+k_1\geq K_2$, or that the assumptions of Theorem \ref{theorem3.1} hold and $b>\ln k_1$. Let $y>1$ be the unique solution of $g(y)=0$, where
        \begin{equation} \label{def_g}
            g(z)=\frac{K_2-k_1}{n_2-n_1}\left[n_2z^{n_1}-n_1z^{n_2}\right]-(K_2-K_1),
        \end{equation}
        and let $e^{a}=k_1/y$. Then the unique strong solution of \eqref{stationary_prob} is
        \begin{equation} \label{stationary_prob_V_2}
            V(x)=
            \begin{cases}
            \displaystyle  K_2-k_1, & x<a,\\[0.45cm]
            \displaystyle  \frac{K_2-k_1}{n_2-n_1}\left[n_2e^{n_1(x-a)}-n_1e^{n_2(x-a)}\right], & a \leq x\leq \ln k_1,
            \end{cases}
        \end{equation}
        and its coincidence set is $(-\infty,a]\cup\{\ln k_1\}$.
    \end{enumerate}
\end{theorem}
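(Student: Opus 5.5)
For (i), the plan is to check the conditions of a strong solution of \eqref{stationary_prob} directly on the restriction of $V^\infty$ and then invoke Lemma \ref{lem_stationary_unique}. By Theorem \ref{theorem3.1}, $V^\infty$ belongs to $C\cap W^2_{p,\rm loc}$, dominates $\psi_1$, has $-\mathcal LV^\infty\geq0$ almost everywhere, and is $\mathcal L$-harmonic off its coincidence set $(-\infty,a]\cup[b,\infty)$. The function is also bounded on $(-\infty,\ln k_1]$. Since $b\leq\ln k_1$, the point $\ln k_1$ lies in the coincidence set. Hence $V^\infty(\ln k_1)=\psi_1(\ln k_1)=K_2-K_1$, which gives the boundary condition.

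For (ii), I would first show that $g$ has a unique root $y>1$. We have $g(1)=(K_2-k_1)-(K_2-K_1)=K_1-k_1<0$ and $g(z)\to\infty$. Moreover $g'(z)=\frac{(K_2-k_1)n_1n_2}{n_2-n_1}(z^{n_1-1}-z^{n_2-1})>0$ for $z>1$, since $n_1n_2<0$ and $z^{n_1-1}<z^{n_2-1}$. Put $e^a=k_1/y$. The formula \eqref{stationary_prob_V_2} is then the function $w_a$ of Lemma \ref{lemma_nonempty}, with $V(\ln k_1)=g(y)+K_2-K_1=K_2-K_1$. By construction it is $C^1$ at $a$, $\mathcal L$-harmonic on $(a,\ln k_1)$, and has residual $r(K_2-k_1)>0$ below $a$. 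By \eqref{Vprime} it is strictly increasing on $(a,\ln k_1]$, so it exceeds $K_2-k_1$ there.

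The main obstacle is showing $V>\psi_1$ on $[\ln K_1,\ln k_1)$, together with $a<\ln K_1$. This is where $b>\ln k_1$ (or $b=\infty$) must be used. I would consider $D:=V-\psi_1=V-(e^x-K_1+K_2-k_1)$ on the call part and argue by the sign of $D'$ at $\ln k_1$. The target inequality is $V'(\ln k_1)<k_1$. If instead $V'(\ln k_1)\geq k_1$, I would compare $V$ with $V^\infty$. Both are of the form $w_c$ for lower boundaries $a$ and $a^\infty$, and the family $w_c$ is decreasing in $c$ and translation-related. I would show that $V'(\ln k_1)\ge k_1$ forces the smooth-fit point of the $w_a$ family to satisfy $b\leq\ln k_1$. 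Concretely, I would use the function $y\mapsto$ (value minus derivative at $\ln k_1$) along the family, which is monotone by the computation $f'<0$ in Theorem \ref{theorem3.1}. That root comparison would give $b\le \ln k_1$, a contradiction.

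In the case $q=0$, $K_1+k_1\ge K_2$, I would instead use $V'\le e^x$. This follows from the maximum-principle argument of Theorem \ref{theorem3.1} applied to $V'-e^x$ (subharmonic with $\mathcal L e^x=-qe^x\le 0$), with negative endpoint data. Once $V'(\ln k_1)<k_1$ is known, the same maximum principle gives $V'<e^x$ on $[a,\ln k_1]$. Integrating backwards from $V(\ln k_1)=\psi_1(\ln k_1)$ yields $V>e^x-K_1+K_2-k_1$ on $[a,\ln k_1)$. In particular $e^a<K_1$, so $V>\psi_1$ on $(a,\ln k_1)$, and the coincidence set is $(-\infty,a]\cup\{\ln k_1\}$.

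Uniqueness then follows from Lemma \ref{lem_stationary_unique}.
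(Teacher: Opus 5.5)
Your argument is circular in the subcase $q=0$, $K_1+k_1\geq K_2$. There you obtain $V'\leq e^x$ from the maximum principle for $V'-e^x$ on $(a,\ln k_1)$ ``with negative endpoint data''. The datum at $a$ is indeed $-e^a<0$. The datum at $\ln k_1$, however, is $V'(\ln k_1)-k_1$, and its sign is exactly the inequality you are trying to prove. In Theorem \ref{theorem3.1} the right endpoint value vanished by smooth fit at $b$. At the cap there is no such condition, only the Dirichlet value $V(\ln k_1)=K_2-K_1$.

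The paper closes this case with the explicit formula. Since $n_2=1$, one has $V-V'=(K_2-k_1)e^{n_1(x-a)}>0$ on $[a,\ln k_1]$, hence
\[
V'(\ln k_1)=K_2-K_1-(K_2-k_1)y^{n_1}<K_2-K_1\leq k_1.
\]
Your own device already gives this. Put $G(u):=\frac{K_2-k_1}{n_2-n_1}\left[n_2e^{n_1u}-n_1e^{n_2u}\right]$. Then $G(u)-G'(u)=f(e^u)-(K_1+k_1-K_2)$. Since $V(\ln k_1)=G(\ln y)=K_2-K_1$, this is the identity $V'(\ln k_1)=k_1-f(y)$. For $n_2=1$ we have $f(z)=(K_2-k_1)z^{n_1}+K_1+k_1-K_2>0$, so $V'(\ln k_1)<k_1$ follows at once, without any maximum principle.

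The rest of your proposal is correct. Part (i), the analysis of $g$, the verification of the equation below $a$ and on $(a,\ln k_1)$, and the final step coincide with the paper. That final step is the maximum principle for $V'-e^x$ once $V'(\ln k_1)<k_1$ is known, followed by integration back from $\ln k_1$, which gives $a<\ln K_1$ and the coincidence set.

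In the case of Theorem \ref{theorem3.1} with $b>\ln k_1$, your route differs from the paper's. The paper shows $a^\infty<a$ and combines convexity of $G$ with the maximum-principle bound $(V^\infty)'\leq e^x$. You argue by contradiction through the monotonicity of $f$. You should write out the end of that contradiction, since the root comparison alone only gives $y\geq y_f:=e^{b-a^\infty}$. Monotonicity of $G$ then yields $K_2-K_1=G(\ln y)\geq G(\ln y_f)=V^\infty(b)=e^b-K_1+K_2-k_1$, that is, $e^b\leq k_1$, contradicting $b>\ln k_1$.

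Via the identity $V'(\ln k_1)=k_1-f(y)$, your approach is purely algebraic. It needs neither convexity of $G$ nor the bound on $(V^\infty)'$. It also shows that, under the assumptions of Theorem \ref{theorem3.1}, $V'(\ln k_1)<k_1$ holds if and only if $b>\ln k_1$.
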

\begin{proof}
    The uniqueness is Lemma \ref{lem_stationary_unique}. For (i), if $b\leq\ln k_1$, then $\ln k_1$ belongs to the coincidence set of $V^\infty$, so $V^\infty(\ln k_1)=\psi_1(\ln k_1)=K_2-K_1$, and the restriction of $V^\infty$ is bounded and satisfies all conditions in \eqref{stationary_prob}.

    For (ii), the function $g$ satisfies
    \begin{equation*}
        g(1)=(K_2-k_1)-(K_2-K_1)=K_1-k_1 <0,\qquad
        \lim_{z\to\infty}g(z)=+\infty
    \end{equation*}
    and
    \begin{equation*}
        g^{\prime}(z)=\frac{(K_2-k_1)n_1n_2}{n_2-n_1}\left[z^{n_1-1}-z^{n_2-1} \right] > 0 \quad \text{ for } z>1,
    \end{equation*}
    so $g(y)=0$ has a unique solution $y\in(1,\infty)$, and $a=\ln k_1-\ln y<\ln k_1$. The function $V$ in \eqref{stationary_prob_V_2} is $C^1$ at $a$, it satisfies $-\mathcal{L}V=r(K_2-k_1)>0$ on $(-\infty,a)$, $-\mathcal{L}V=0$ on $(a,\ln k_1)$, and $V(\ln k_1)=K_2-K_1$ by the choice of $y$. As in the proof of Theorem \ref{theorem3.1}, $V^{\prime}\geq0$ on $[a,\ln k_1]$, so $V\geq K_2-k_1$. It remains to show that $V\geq e^x-K_1+K_2-k_1$ on $[\ln K_1,\ln k_1]$, and since $V(\ln k_1)=\psi_1(\ln k_1)$ it suffices to prove $V'\leq e^x$ on $(a,\ln k_1)$. The function $V'-e^x$ satisfies $-\mathcal{L}(V'-e^x)=-qe^x\leq0$ on $(a,\ln k_1)$ and $V'(a)-e^a<0$, so by the maximum principle it suffices to show that
    \begin{equation} \label{Vprime_at_cap}
        V'(\ln k_1)\leq k_1.
    \end{equation}
    Suppose first that $q=0$ and $K_1+k_1\geq K_2$. Then $n_2=1$, and \eqref{stationary_prob_V_2} gives the identity $V(x)=V^{\prime}(x)+(K_2-k_1)e^{n_1(x-a)}$ on $[a,\ln k_1]$, whence
    \begin{equation*}
        V'(\ln k_1)=K_2-K_1-(K_2-k_1)y^{n_1}<K_2-K_1\leq k_1.
    \end{equation*}
    Suppose next that the assumptions of Theorem \ref{theorem3.1} hold and $b>\ln k_1$. Write
    \begin{equation*}
        G(u):=\frac{K_2-k_1}{n_2-n_1}\left[n_2e^{n_1u}-n_1e^{n_2u}\right], \qquad u\geq0,
    \end{equation*}
    so that $V(x)=G(x-a)$ on $[a,\ln k_1]$ and $V^\infty(x)=G(x-a^\infty)$ on $[a^\infty,b]$, where $a^\infty$ denotes the lower free boundary of Theorem \ref{theorem3.1}. The function $G$ is increasing and convex on $[0,\infty)$, because
    \begin{equation*}
        G'(u)=\frac{(K_2-k_1)n_1n_2}{n_2-n_1}\left[e^{n_1u}-e^{n_2u}\right]>0 \qquad \text{ and } \qquad G''(u)=\frac{(K_2-k_1)n_1n_2}{n_2-n_1}\left[n_1e^{n_1u}-n_2e^{n_2u}\right]>0
    \end{equation*}
    for $u>0$. Since $\ln k_1<b$, the point $\ln k_1$ lies in the continuation set of $V^\infty$, so $G(\ln k_1-a^\infty)=V^\infty(\ln k_1)>\psi_1(\ln k_1)=K_2-K_1=G(\ln k_1-a)$, and the monotonicity of $G$ gives $a^\infty<a$. Therefore, by the convexity of $G$ and the inequality $(V^\infty)'\leq e^x$ on $[a^\infty,b]$ established in the proof of Theorem \ref{theorem3.1},
    \begin{equation*}
        V'(\ln k_1)=G'(\ln k_1-a)\leq G'(\ln k_1-a^\infty)=(V^\infty)'(\ln k_1)\leq k_1,
    \end{equation*}
    which is \eqref{Vprime_at_cap}. The strong maximum principle gives $V'(x)<e^x$ for $a\leq x<\ln k_1$, while the explicit formula gives $V'(x)>0$ for $a<x\leq\ln k_1$. Integrating the first inequality from $a$ to $\ln k_1$ yields
    \begin{equation*}
        V(a)>e^a-K_1+K_2-k_1.
    \end{equation*}
    Since $V(a)=K_2-k_1$, this proves $a<\ln K_1$. The two strict derivative inequalities and the endpoint values then give $V>\psi_1$ on $(a,\ln k_1)$ and $V=\psi_1$ on $(-\infty,a]\cup\{\ln k_1\}$. Thus $V$ is the asserted strong solution.
\end{proof}

\begin{remark}
    Theorem \ref{theorem3.2} says that the upper free boundary of the stationary problem \eqref{stationary_prob} is $\min\{b,\ln k_1\}$, with $b=+\infty$ when $q=0$ and $K_1+k_1\geq K_2$. This is the stationary counterpart of Theorem \ref{thm_reduction}. Note also that the condition $b\leq\ln k_1$ in (i) is a condition on the parameters, since $b$ is determined by \eqref{def_f} and \eqref{def_ab}.
\end{remark}

Using the comparison principle, we obtain bounds for the free boundaries $A$ and $B$ that are uniform in time.
\begin{proposition} \label{prop_YV}
    Let $V$ be the strong solution of \eqref{stationary_prob} and let $a$ be its lower free boundary, given by Theorem \ref{theorem3.2}. Let $b_{k_1}:=b$ in the case (i) of Theorem \ref{theorem3.2} and $b_{k_1}:=\ln k_1$ in the case (ii). Then
    \begin{equation*}
     \begin{cases}
        \displaystyle  Y(x,\tau) \leq V(x),&  (x,\tau) \in (-\infty,\ln k_1] \times [0,T],\\[0.3cm]
        \displaystyle  a \leq A(\tau) < \ln K_1, & \tau \in (0,T], \\[0.3cm]
        \displaystyle  \min\{\xi,\ln k_1\} \leq B(\tau) \leq b_{k_1}, & \tau \in (0,T],
    \end{cases}
    \end{equation*}
    where $\xi$ is defined in \eqref{def_xi}. In particular $A(\tau)\searrow A(T)\geq a$ and $B(\tau)\nearrow B(T)\leq b_{k_1}$ as $\tau\nearrow T$, and the limits as $T\to\infty$ exist.
\end{proposition}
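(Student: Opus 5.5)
The plan is to obtain the three displayed bounds in turn: first the value bound by comparison, then the free boundary bounds by reading off coincidence sets, and finally the limit statements from monotonicity.

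\textbf{Value bound.} Regard the stationary solution $V$ of Theorem \ref{theorem3.2} as a function of $(x,\tau)$. It is bounded and lies in $W^2_{p,\rm loc}$, hence in $\mathcal W_p$ of the half line. It satisfies $V\geq\psi_1$, $(\partial_\tau-\mathcal L)V=-\mathcal LV\geq0$ almost everywhere, and $V(\ln k_1)=K_2-K_1$. Thus it is a supersolution of \eqref{condor_spread_left}, as already noted before Lemma \ref{lem_stationary_unique}. Lemma \ref{lem_comparison} (i) then gives $Y\leq V$ on $(-\infty,\ln k_1]\times[0,T]$.

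\textbf{Lower boundary $A$.} The bound $A(\tau)<\ln K_1$ is part of the definition of $A$, via Lemma \ref{lemma_K_1}. For $x\leq a$ we have $V(x)=K_2-k_1=\psi_1(x)$, so $\psi_1\leq Y\leq V$ forces $Y(x,\tau)=K_2-k_1$. Hence $(-\infty,a]$ lies in the lower exercise interval and $A(\tau)\geq a$.

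\textbf{Upper boundary $B$, upper estimate.} In case (i) of Theorem \ref{theorem3.2}, $V=\psi_1$ on $[b,\ln k_1]$. The same squeeze gives contact of $Y$ there, so $B(\tau)\leq b=b_{k_1}$, using $b>\ln K_1$ from Theorem \ref{theorem3.1}. In case (ii), $B(\tau)\leq\ln k_1$ holds trivially by definition.

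\textbf{Upper boundary $B$, lower estimate.} This comes from Theorem \ref{b0}. In case (i) there, $B\equiv\ln k_1\geq\min\{\xi,\ln k_1\}$. In case (ii) there, the contact-strip observation \eqref{b0_observation} gives $B(\tau)\geq\xi$ whenever $B(\tau)<\ln k_1$, and otherwise $B(\tau)=\ln k_1$.

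The step I expect to need care is matching the case distinctions. In the case $q=0$, $K_1+k_1=K_2$, we have $\xi=\ln K_1$ and $B\equiv\ln k_1$, which is consistent. The definition of $\xi$ when its second term is $-\infty$ also needs checking. Otherwise everything is direct.

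\textbf{Limits.} Theorems \ref{a0} and \ref{b0} show that $A$ is nonincreasing and $B$ is nondecreasing, and both are continuous on $(0,T]$. Therefore $A(\tau)\searrow A(T)\geq a$ and $B(\tau)\nearrow B(T)\leq b_{k_1}$ as $\tau\nearrow T$.

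For $T\to\infty$, observe that the solution for horizon $T'>T$ restricted to $[0,T]$ solves the horizon-$T$ problem. By uniqueness in Lemma \ref{lem_comparison}, $A$ and $B$ do not depend on the horizon and extend to $(0,\infty)$. They are monotone and bounded by $a$ and $b_{k_1}$, so their limits as $T\to\infty$ exist.
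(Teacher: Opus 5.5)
Your proposal is correct and follows the paper's proof essentially step by step. You compare with the stationary supersolution $V$ via Lemma \ref{lem_comparison} (i), squeeze $\psi_1\le Y\le V$ on the coincidence set of $V$ to get $a\le A(\tau)$ and $B(\tau)\le b_{k_1}$, use Theorem \ref{b0} for the lower bound on $B$, and get the limits from monotonicity plus horizon-independence by uniqueness. The differences are cosmetic: you get $B(\tau)\ge\xi$ in case (ii) of Theorem \ref{b0} from the contact-strip observation \eqref{b0_observation} rather than from $B(\tau)\ge B(0)=\xi$, and $A(\tau)<\ln K_1$ from Lemma \ref{lemma_K_1} alone, both of which are valid.
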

\begin{proof}
    Regarded as a function of $(x,\tau)$, $V$ is a bounded supersolution of \eqref{condor_spread_left}, since $V\geq\psi_1$, $\partial_\tau V-\mathcal{L}V=-\mathcal{L}V\geq0$ and $V(\ln k_1)=K_2-K_1$. Lemma \ref{lem_comparison} (i) gives $Y\leq V$. Hence $Y=\psi_1$ wherever $V=\psi_1$, that is, $Y(x,\tau)=K_2-k_1$ for $x\leq a$ and $Y(x,\tau)=\psi_1(x)$ for $b_{k_1}\leq x\leq\ln k_1$. This gives $a\leq A(\tau)$ and $B(\tau)\leq b_{k_1}$. The strict upper bound for $A$ follows from Lemma \ref{lemma_K_1} and Theorem \ref{a0}. For the lower bound for $B$, Theorem \ref{b0} gives $B=\ln k_1$ in case (i), and $B(\tau)\geq B(0)=\xi$ in case (ii), so that $B(\tau)\geq\min\{\xi,\ln k_1\}$ in all cases. The last assertion follows from the monotonicity of $A$ and $B$ in $\tau$, which does not depend on $T$ because the solution of \eqref{condor_spread_left} with a larger horizon restricts to the solution with a smaller horizon.
\end{proof}

We finally consider the problem \eqref{condor_spread_right}. Its stationary problem is
\begin{equation} \label{stationary_prob_2}
    \begin{cases}
        -\mathcal{L} V=0, & \text { if } V>\psi_2(x), \;\;x \in (\ln k_2,+\infty),\\[0.3cm]
        -\mathcal{L} V \geq 0, & \text { if } V=\psi_2(x), \;\; x \in (\ln k_2,+\infty),\\[0.3cm]
        V(\ln k_2)=K_2-K_1, &
    \end{cases}
\end{equation}
and the uncapped stationary problem is the same problem on $\mathbb R$ without the boundary condition. Here a strong solution of \eqref{stationary_prob_2} means a bounded function
\begin{equation*}
    V\in C([\ln k_2,\infty))\cap W^2_{p,\mathrm{loc}}((\ln k_2,\infty))
\end{equation*}
with $V\geq\psi_2$, $-\mathcal LV\geq0$ almost everywhere, $-\mathcal LV=0$ almost everywhere on $\{V>\psi_2\}$, and the prescribed boundary value. For the uncapped problem we impose the same obstacle and differential conditions on $\mathbb R$ and again require boundedness. The bounded class is essential for the half-line uniqueness argument: the proof of Lemma \ref{lem_stationary_unique} applies with the barrier $e^{n_2x}$, which diverges as $x\to+\infty$. The formulas below require no distinction between $q=0$ and $q>0$, because $-\mathcal L\psi_2=-qe^x+r(K_2+k_2-K_1)$ is strictly positive on $(\ln k_2,\ln K_2)$ when $q=0$, so that the put side always admits early exercise.
\begin{theorem} \label{theorem3.3}
    Let $\widetilde{y}\in(0,1)$ be the unique solution of $\widetilde{f}(\widetilde{y})=0$, where
    \begin{equation*}
        \widetilde{f}(z)=\frac{k_2-K_1}{n_2-n_1}\left[n_2(1-n_1)z^{n_1}+n_1(n_2-1)z^{n_2}\right]-(K_2+k_2-K_1),
    \end{equation*}
    and let $\widetilde{b}<\widetilde{a}$ be defined by
    \begin{equation*}
        e^{\widetilde{b} }=-\frac{(k_2-K_1)n_1n_2}{n_2-n_1}\left[\widetilde{y}^{\,n_1}-\widetilde{y}^{\,n_2}\right], \qquad
        e^{\widetilde{a} }=\frac{e^{\widetilde{b} }}{\widetilde{y}}.
    \end{equation*}
    Then $\widetilde{b}<\ln K_2<\widetilde{a}$, and the function
    \begin{equation} \label{stationary_prob_2_V}
        \widetilde{V}^\infty(x)=
        \begin{cases}
        \displaystyle  K_2-e^x+k_2-K_1, &   x < \widetilde{b} ,\\[0.45cm]
        \displaystyle  \frac{k_2-K_1}{n_2-n_1}\left[n_2e^{n_1(x-\widetilde{a} )}-n_1e^{n_2(x-\widetilde{a} )}\right], & \widetilde{b}  \leq x \leq \widetilde{a}  , \\[0.45cm]
        \displaystyle  k_2-K_1, &   x > \widetilde{a} ,
        \end{cases}
    \end{equation}
    is a strong solution of the uncapped stationary problem on $\mathbb{R}$ with the obstacle $\psi_2$. If $\widetilde{b}\geq\ln k_2$, then the restriction of $\widetilde{V}^\infty$ to $[\ln k_2,\infty)$ is the unique strong solution of \eqref{stationary_prob_2}. If $\widetilde{b}<\ln k_2$, then the unique strong solution of \eqref{stationary_prob_2} is
    \begin{equation*}
        \widetilde{V}(x)=
        \begin{cases}
        \displaystyle  \frac{k_2-K_1}{n_2-n_1}\left[n_2e^{n_1(x-\widetilde{a} )}-n_1e^{n_2(x-\widetilde{a} )}\right], & \ln k_2  \leq x \leq \widetilde{a}  , \\[0.45cm]
        \displaystyle  k_2-K_1, &   x > \widetilde{a} ,
        \end{cases}
    \end{equation*}
    where now $\widetilde{a}$ is determined by $\widetilde{V}(\ln k_2)=K_2-K_1$, that is, $e^{\widetilde a}=k_2/\widetilde{y}$ with the unique root $\widetilde{y}\in(0,1)$ of $\widetilde{g}(z)=\frac{k_2-K_1}{n_2-n_1}\left[n_2z^{n_1}-n_1z^{n_2}\right]-(K_2-K_1)$.
\end{theorem}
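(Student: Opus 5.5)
The proof will mirror Theorems \ref{theorem3.1} and \ref{theorem3.2}, with left and right interchanged. The put side always admits early exercise, so only one uncapped case arises.

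\textbf{Uncapped solution.} On $[\widetilde b,\widetilde a]$ take the general $\mathcal L$-harmonic function $c_1e^{n_1x}+c_2e^{n_2x}$. Imposing $V(\widetilde a)=k_2-K_1$ and $V'(\widetilde a)=0$ gives the middle formula in \eqref{stationary_prob_2_V}, with
\begin{equation*}
V'(x)=\frac{(k_2-K_1)n_1n_2}{n_2-n_1}\bigl[e^{n_1(x-\widetilde a)}-e^{n_2(x-\widetilde a)}\bigr].
\end{equation*}
Put $z=e^{\widetilde b-\widetilde a}\in(0,1)$. The conditions $V'(\widetilde b)=-e^{\widetilde b}$ and $V(\widetilde b)=K_2+k_2-K_1+V'(\widetilde b)$ become the formula for $e^{\widetilde b}$ and $\widetilde f(z)=0$.

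For uniqueness of the root, check that $\widetilde f'(z)$ has the sign of $n_1n_2\bigl[(1-n_1)z^{n_1-1}+(n_2-1)z^{n_2-1}\bigr]$. When $n_2\ge1$ this is negative on $(0,1)$. Moreover $\widetilde f(1)=(k_2-K_1)-(K_2+k_2-K_1)=-K_2<0$ and $\widetilde f(z)\to+\infty$ as $z\searrow0$, because $n_2(1-n_1)z^{n_1}$ dominates. Hence there is a unique root $\widetilde y\in(0,1)$, and the formula gives $e^{\widetilde b}>0$.

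Next I show $V\ge\psi_2$. On $(\widetilde b,\widetilde a)$ the explicit derivative gives $V'<0$, so $V\ge k_2-K_1$. The function $V'+e^x$ satisfies $-\mathcal L(V'+e^x)=qe^x\ge0$, equals $0$ at $\widetilde b$, and is positive at $\widetilde a$. The minimum principle then gives $V'>-e^x$ on $(\widetilde b,\widetilde a]$. Integrating from $\widetilde b$ yields $V>K_2-e^x+k_2-K_1$ on $(\widetilde b,\widetilde a]$. Evaluating at $\widetilde a$ gives $e^{\widetilde a}>K_2$, and $V(\widetilde b)>k_2-K_1$ gives $e^{\widetilde b}<K_2$.

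It remains to check the residuals. On $x>\widetilde a$ the residual is $r(k_2-K_1)>0$. On $x<\widetilde b$ it is $-\mathcal L\psi_2=r(K_2+k_2-K_1)-qe^x$, which must be nonnegative, i.e. $qe^{\widetilde b}\le r(K_2+k_2-K_1)$.

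\textbf{Main obstacle: the residual inequality.} This is the analogue of the chain using \eqref{sta_cal} in Theorem \ref{theorem3.1}. I will try to write
\begin{equation*}
e^{\widetilde b}=-V'(\widetilde b)\le \frac{n_1}{n_1-1}\,(K_2+k_2-K_1)\cdot(\text{factor}),
\end{equation*}
using $\widetilde f=0$ to eliminate the bracket. Concretely, bound $-n_1n_2(\widetilde y^{n_1}-\widetilde y^{n_2})$ by a combination of $n_2(1-n_1)\widetilde y^{n_1}$ and $n_1(n_2-1)\widetilde y^{n_2}$. This uses $\widetilde y<1$ and the reverse form of the second relation in \eqref{sta_cal}, which holds with the roles of $n_1,n_2$ swapped. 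The result should be $e^{\widetilde b}\le\frac{n_1}{n_1-1}(K_2+k_2-K_1)$. Multiplying by $q$ and using $\frac{n_1}{n_1-1}\frac{n_2}{n_2-1}=r/q$ with $\frac{n_2}{n_2-1}\ge1$ then gives the inequality. When $q=0$ it is trivial.

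\textbf{Capped solution.} Uniqueness follows as in Lemma \ref{lem_stationary_unique}, with barrier $e^{n_2x}$ in the bounded class.

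If $\widetilde b\ge\ln k_2$, the restriction of $\widetilde V^\infty$ satisfies $\widetilde V^\infty(\ln k_2)=\psi_2(\ln k_2)=K_2-K_1$, so it is a solution.

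If $\widetilde b<\ln k_2$, note that $\widetilde g(1)=K_1-k_2<0$, $\widetilde g(z)\to\infty$ as $z\searrow0$, and $\widetilde g'<0$ on $(0,1)$. This gives a unique root and $e^{\widetilde a}=k_2/\widetilde y>k_2$. As in Theorem \ref{theorem3.2}(ii), $\widetilde V'<0$ gives $\widetilde V\ge k_2-K_1$. To get $\widetilde V\ge K_2-e^x+k_2-K_1$, apply the minimum principle to $\widetilde V'+e^x$, which reduces to the boundary check $\widetilde V'(\ln k_2)\ge-k_2$. I obtain this by comparing with $\widetilde V^\infty$ via the function $G(u)$, now for $u\le0$.

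Since $\ln k_2\in(\widetilde b,\widetilde a^\infty)$ lies in continuation, $\widetilde V^\infty(\ln k_2)>K_2-K_1=\widetilde V(\ln k_2)$. Because $G$ is decreasing for $u<0$, this forces the capped $\widetilde a$ to satisfy $\widetilde a<\widetilde a^\infty$. Convexity of $G$ then gives $\widetilde V'(\ln k_2)\ge(\widetilde V^\infty)'(\ln k_2)\ge-k_2$.

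Finally, integrating gives $\widetilde a>\ln K_2$, the coincidence set is $\{\ln k_2\}\cup[\widetilde a,\infty)$, and the residual on $(\widetilde a,\infty)$ is $r(k_2-K_1)>0$.
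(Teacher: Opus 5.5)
Your proposal is correct and follows the paper's proof essentially step for step: smooth fit at $\widetilde b$ and $\widetilde a$, the minimum principle for $\widetilde V'+e^x$, and the monotonicity and convexity of $G$ giving $\widetilde V'(\ln k_2)\ge-k_2$ in the capped case. The one variation is the residual inequality on $(-\infty,\widetilde b)$: you derive $qe^{\widetilde b}\le r(K_2+k_2-K_1)$ from the intermediate bound $e^{\widetilde b}\le\frac{n_1}{n_1-1}(K_2+k_2-K_1)$, whereas the paper writes one exact identity for $\frac{r(K_2+k_2-K_1)}{q}-e^{\widetilde b}$ with a manifestly positive right-hand side. Your intermediate bound does hold, with remainder $\frac{-n_1}{1-n_1}(k_2-K_1)\widetilde y^{\,n_2}>0$, so $\widetilde y<1$ is not needed there. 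There is also a small slip: $\widetilde g(1)=k_2-K_2$, not $K_1-k_2$, but it is still negative, so your root argument is unaffected.
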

\begin{proof}
    The smooth fit conditions at $\widetilde b$ are $\widetilde V(\widetilde b)=K_2-e^{\widetilde b}+k_2-K_1$ and $\widetilde V'(\widetilde b)=-e^{\widetilde b}$. Together with value matching and zero derivative at $\widetilde a$, they give the displayed formulas and $\widetilde f(\widetilde y)=0$, where $\widetilde y=e^{\widetilde b-\widetilde a}$. We have $\widetilde f(z)\to+\infty$ as $z\searrow0$, $\widetilde f(1)=-K_2<0$, and $\widetilde f'<0$ on $(0,1)$. Thus the root is unique, and $e^{\widetilde b}>0$ because $\widetilde y^{n_1}>\widetilde y^{n_2}$.

    The continuation expression is strictly decreasing and strictly convex on $(\widetilde b,\widetilde a)$. Its derivative satisfies
    \begin{equation*}
        -\mathcal L\bigl[(\widetilde V^\infty)'+e^x\bigr]=qe^x\geq0,
    \end{equation*}
    with boundary values $0$ at $\widetilde b$ and $e^{\widetilde a}>0$ at $\widetilde a$. The strong maximum principle gives $(\widetilde V^\infty)'+e^x>0$ in $(\widetilde b,\widetilde a]$. Integrating from $\widetilde b$ yields
    \begin{equation*}
        \widetilde V^\infty(x)>K_2-e^x+k_2-K_1,
        \qquad \widetilde b<x\leq\widetilde a.
    \end{equation*}
    At $x=\widetilde a$ this proves $e^{\widetilde a}>K_2$. Strict decrease also gives
    $\widetilde V^\infty(\widetilde b)>\widetilde V^\infty(\widetilde a)=k_2-K_1$, so $e^{\widetilde b}<K_2$. Consequently $\widetilde b<\ln K_2<\widetilde a$, and the constructed function dominates $\psi_2$, strictly between its two free boundaries.

    On $(-\infty,\widetilde b)$ we must check $-\mathcal L\psi_2=-qe^x+r(K_2+k_2-K_1)\geq0$. This is immediate when $q=0$. When $q>0$, the equation $\widetilde f(\widetilde y)=0$ and \eqref{sta_cal} give the exact identity
    \begin{equation*}
        \frac{r(K_2+k_2-K_1)}q-e^{\widetilde b}
        =\frac{(k_2-K_1)(-n_1n_2)}{n_2-n_1}
        \left[
            \frac{\widetilde y^{n_1}}{n_2-1}
            +\frac{\widetilde y^{n_2}}{1-n_1}
        \right]>0.
    \end{equation*}
    On $(\widetilde a,\infty)$ the obstacle is the constant $k_2-K_1$, whose negative image under $\mathcal L$ is positive. Value matching and smooth fit therefore give the asserted bounded strong solution. If $\widetilde b\geq\ln k_2$, its restriction satisfies the boundary value in \eqref{stationary_prob_2}, and bounded uniqueness has already been established.

    Suppose now that $\widetilde b<\ln k_2$. The function $\widetilde g$ tends to $+\infty$ at $0$, satisfies $\widetilde g(1)=k_2-K_2<0$, and has negative derivative on $(0,1)$, so its root is unique. To verify the obstacle inequality for the resulting capped function, write
    \begin{equation*}
        G(u):=\frac{k_2-K_1}{n_2-n_1}
        \left[n_2e^{n_1u}-n_1e^{n_2u}\right],\qquad u\leq0,
    \end{equation*}
    which is strictly decreasing on $(-\infty,0)$ and strictly convex. Denote the uncapped upper boundary by $\widetilde a^\infty$ and the newly determined capped upper boundary by $\widetilde a$. Since
    \begin{equation*}
        G(\ln k_2-\widetilde a^\infty)
        =\widetilde V^\infty(\ln k_2)>K_2-K_1
        =G(\ln k_2-\widetilde a),
    \end{equation*}
    monotonicity gives $\widetilde a<\widetilde a^\infty$. Comparing these two upper free boundaries and using convexity yields
    \begin{equation*}
        \widetilde V'(\ln k_2)
        =G'(\ln k_2-\widetilde a)
        \geq G'(\ln k_2-\widetilde a^\infty)
        =(\widetilde V^\infty)'(\ln k_2)\geq-k_2.
    \end{equation*}
    Apply the maximum principle to $\widetilde V'+e^x$ on $(\ln k_2,\widetilde a)$, with these nonnegative left boundary data and the positive right boundary value $e^{\widetilde a}$. Integration from $\ln k_2$ proves strict domination of the affine part of $\psi_2$ up to $\widetilde a$, and at $\widetilde a$ it gives $e^{\widetilde a}>K_2$. Strict decrease toward the value $k_2-K_1$ gives domination of the constant part as well. The capped function is therefore a bounded strong solution, and uniqueness completes the proof.
\end{proof}
Similar to Proposition \ref{prop_YV}, we obtain the following bounds for the free boundaries of \eqref{condor_spread_right}, where $\widetilde{b}_{k_2}:=\widetilde{b}$ if $\widetilde{b}\geq\ln k_2$ and $\widetilde{b}_{k_2}:=\ln k_2$ otherwise.
\begin{proposition} \label{prop_YV2}
    Let $Y$ be the solution of \eqref{condor_spread_right} and let $\widetilde{V}$ be the strong solution of \eqref{stationary_prob_2}. Then
    \begin{equation*}
     \begin{cases}
        \displaystyle  Y(x,\tau) \leq \widetilde{V}(x),&  (x,\tau) \in [\ln k_2,+\infty)\times [0,T],\\[0.3cm]
        \displaystyle  \ln K_2 < \widetilde{A} (\tau) \leq \widetilde{a} , & \tau \in (0,T], \\[0.3cm]
        \displaystyle  \widetilde{b}_{k_2} \leq \widetilde{B} (\tau) \leq \widetilde{B} (0), & \tau \in (0,T].
    \end{cases}
    \end{equation*}
\end{proposition}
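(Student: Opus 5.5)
The plan is to mirror the proof of Proposition \ref{prop_YV}, interchanging left and right.

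First, regard $\widetilde V$ as a function of $(x,\tau)$ independent of $\tau$. By Theorem \ref{theorem3.3} it is a bounded strong solution of \eqref{stationary_prob_2}. Hence $\widetilde V\geq\psi_2$, $(\partial_\tau-\mathcal L)\widetilde V=-\mathcal L\widetilde V\geq0$ almost everywhere, and $\widetilde V(\ln k_2)=K_2-K_1$. It is therefore a bounded supersolution of \eqref{condor_spread_right} on $(\ln k_2,\infty)\times(0,T]$. The one point to check is membership in $\mathcal W_p$, which holds because $\widetilde V$ is $C^1$ with piecewise smooth second derivative. Lemma \ref{lem_comparison} (i) then gives $Y\leq\widetilde V$.

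Since $\psi_2\leq Y\leq\widetilde V$, we get $Y=\psi_2$ wherever $\widetilde V=\psi_2$. There are two cases.

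\begin{enumerate}[label=(\roman*)]
    \item If $\widetilde b\geq\ln k_2$, the coincidence set of $\widetilde V$ is $[\ln k_2,\widetilde b]\cup[\widetilde a,\infty)$.
    \item If $\widetilde b<\ln k_2$, it is $\{\ln k_2\}\cup[\widetilde a,\infty)$.
\end{enumerate}

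In both cases $Y(x,\tau)=k_2-K_1$ for $x\geq\widetilde a$. Because $\widetilde A(\tau)$ is the smallest such point above $\ln K_2$, this gives $\widetilde A(\tau)\leq\widetilde a$. Likewise $Y=\psi_2$ on $[\ln k_2,\widetilde b_{k_2}]$, so the definition of $\widetilde B$ as a maximum gives $\widetilde B(\tau)\geq\widetilde b_{k_2}$. In case (ii) this bound is trivial, since $\widetilde B\geq\ln k_2$ by definition.

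The remaining two bounds come from earlier results. The strict inequality $\widetilde A(\tau)>\ln K_2$ follows from Lemma \ref{lemma_K_2}: the value at $\ln K_2$ strictly exceeds $k_2-K_1$, and continuity plus the monotonicity \eqref{condor_spread_prop3} place $\widetilde A$ strictly to the right. Alternatively, it follows from strict increase together with $\widetilde A(0)=\ln K_2$ from Theorem \ref{a0_tilde}. The upper bound $\widetilde B(\tau)\leq\widetilde B(0)$ is immediate from Theorem \ref{b0_tilde}, since $\widetilde B$ is nonincreasing and $\widetilde B(0)$ is its limit as $\tau\searrow0$.

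I expect no genuine obstacle. The only delicate step is confirming that the explicit $\widetilde V$ lies in the solution class and has the correct coincidence sets, and both facts are already established in Theorem \ref{theorem3.3}.
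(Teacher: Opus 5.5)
Your proposal is correct and matches the paper's proof, which simply says to mirror Proposition \ref{prop_YV}. Comparison with the stationary supersolution $\widetilde V$ gives $Y\le\widetilde V$, hence $\widetilde A(\tau)\le\widetilde a$ and $\widetilde B(\tau)\ge\widetilde b_{k_2}$; the strict bound $\widetilde A(\tau)>\ln K_2$ comes from Lemma \ref{lemma_K_2} and Theorem \ref{a0_tilde}, and $\widetilde B(\tau)\le\widetilde B(0)$ comes from the monotonicity in Theorem \ref{b0_tilde}.
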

\begin{proof}
    The proof is the same as that of Proposition \ref{prop_YV}.
\end{proof}

\medskip
\noindent\emph{Perpetual interpretation.}
For a payoff $\Phi$, its perpetual value is
\begin{equation*}
    \sup_{\theta\in\mathcal T_{0,\infty}}
    \mathbb E_s\left[e^{-r\theta}\Phi(S_\theta)\right],
\end{equation*}
where $\mathcal T_{0,\infty}$ denotes the set of almost surely finite stopping times and $S_0=s$. The stationary functions above give this value for the uncapped payoffs $\Phi(s)=\psi_i(\ln s)$, $i=1,2$. For the full condor payoff $\Phi=V_0$, the stationary value $w$ is obtained by joining $V(\ln s)$ for $s<k_1$, the constant $M:=K_2-K_1$ on $[k_1,k_2]$, and $\widetilde V(\ln s)$ for $s>k_2$.

To verify these assertions, first let $w$ be either of the two uncapped stationary functions, written in the spot variable. The generalized It\^{o} formula, applied with localization, and the inequality $-\mathcal L V\geq0$ show that $e^{-rt}w(S_t)$ is a nonnegative local supermartingale, hence a supermartingale. Localization followed by Fatou's lemma therefore gives
\begin{equation*}
    \mathbb E_s\left[e^{-r\theta}\Phi(S_\theta)\right]\leq w(s)
    \qquad(\theta\in\mathcal T_{0,\infty}).
\end{equation*}
For the full condor, replacing $\theta$ by its minimum with the first entrance into the plateau can only increase the discounted payoff, since $V_0\leq M$. The same localization on the initial side of the plateau, stopped upon entering it, gives this upper bound, as in Theorem \ref{thm_verification}.
Except for the uncapped call with $q=0$ and $K_1+k_1\geq K_2$, every continuation interval has finite endpoints in the log variable. Its first exit $\theta^*$ is almost surely finite and is a contact time. The equation in continuation makes the stopped It\^{o} identity an equality. Since $w$ is bounded on the closed interval, the remaining term $\mathbb E_s[e^{-rt}w(S_t)\mathbf 1_{\{\theta^*>t\}}]$ is bounded by $Ce^{-rt}$ and tends to zero as $t\to\infty$. Thus
\begin{equation*}
    w(s)=\mathbb E_s\left[e^{-r\theta^*}\Phi(S_{\theta^*})\right],
\end{equation*}
with immediate stopping when $s$ is already in contact. This proves the perpetual identification in these cases, including the full condor payoff.

For the remaining uncapped call, use $D,\alpha,s_*$ defined above and put $c:=K_1-D\geq0$. If $s>s_*$ and $R>\max\{s,K_1\}$, let $\theta_R$ be the first exit from $(s_*,R)$. This time is almost surely finite, and the bounded-interval It\^{o} identity gives $w(s)=\mathbb E_s[e^{-r\theta_R}w(S_{\theta_R})]$. At the lower endpoint $w=\Phi$, while
\begin{equation*}
    w(R)-\Phi(R)=c+\frac{D}{1+\alpha}(R/s_*)^{-\alpha}.
\end{equation*}
Optional stopping for the nonnegative discounted stock gives $\mathbb E_s[e^{-r\theta_R}\mathbf1_{\{S_{\theta_R}=R\}}]\leq s/R$. Consequently
\begin{equation*}
    0\leq w(s)-\mathbb E_s[e^{-r\theta_R}\Phi(S_{\theta_R})]
    \leq\frac{s}{R}\left[c+\frac{D}{1+\alpha}(R/s_*)^{-\alpha}\right]
    \longrightarrow0.
\end{equation*}
These exit strategies approach the perpetual value as $R\to\infty$, and for $s\leq s_*$ immediate stopping gives the value. Thus the stationary price is identified also when its upper contact boundary is infinite.

\section{Numerical simulation} \label{sec6}

We compute the contract value in the original spot variable and use time to expiry throughout this section:
\begin{equation*}
    U(s,\tau):=V(s,T-\tau)=Y(\ln s,\tau),
    \qquad U(s,0)=V_0(s)=V(s,T).
\end{equation*}
The four boundaries in the figures are therefore $e^{A(\tau)}$, $e^{B(\tau)}$, $e^{\widetilde B(\tau)}$ and $e^{\widetilde A(\tau)}$, in increasing spatial order. Their values at $\tau=0$ denote the one-sided limits established in Section~\ref{sec4}, because the entire terminal slice is in contact.

We solve the two obstacle problems separately on $[s_{\min},k_1]$ and $[k_2,s_{\max}]$, with $s_{\min}=0.05$ and $s_{\max}=12$. We impose $U(k_1,\tau)=U(k_2,\tau)=M:=K_2-K_1$, set the value exactly equal to $M$ on the plateau, and impose the constant tail payoffs $K_2-k_1$ and $k_2-K_1$ at the outer computational endpoints. The truncation check below assesses these outer boundary choices. A uniform spot mesh aligns all four strikes. With $s_i=s_{\min}+i\Delta s$, $\tau_n=n\Delta\tau$, and $\phi_i=V_0(s_i)$, the backward Euler approximation is
\begin{equation} \label{numerical_lcp}
    \min\left\{\frac{U_i^n-U_i^{n-1}}{\Delta\tau}
                  -(\mathcal G_hU^n)_i,\ U_i^n-\phi_i\right\}=0,
    \qquad U_i^0=\phi_i,
\end{equation}
at the interior nodes of each half-line truncation, where
\begin{equation*}
    (\mathcal G_h u)_i=
    \frac{\sigma^2s_i^2}{2}\frac{u_{i+1}-2u_i+u_{i-1}}{\Delta s^2}
    +(r-q)s_i\frac{u_{i+1}-u_{i-1}}{2\Delta s}-ru_i.
\end{equation*}
The off-diagonal coefficients of $\mathcal G_h$ are nonnegative whenever $\sigma^2s_i\geq|r-q|\Delta s$, which holds on every grid used here. Thus $I-\Delta\tau\mathcal G_h$ has the monotonicity needed for the discrete obstacle problem. At each time step we solve the linear complementarity problem by policy iteration, following \cite{reisinger2012policy}. If $A_h=I-\Delta\tau\mathcal G_h$ and $b$ incorporates the preceding time step and boundary data, iteration stops when
\begin{equation*}
    \|\min\{A_hu-b,u-\phi\}\|_\infty<10^{-10},
\end{equation*}
where the minimum is componentwise. Nodes with $u_i-\phi_i>10^{-9}$ are classified as continuation nodes. We report the neighboring contact nodes as numerical boundaries. These are mesh estimates, and no smoothing of the computed curves is applied.

\subsection{Value and exercise boundaries}
Unless stated otherwise, the parameters are
\begin{equation*}
    r=0.05,\quad q=0.1,\quad\sigma=0.3,\quad
    K_1=1,\quad k_1=1.5,\quad k_2=2,\quad K_2=2.5,
    \quad 0\leq\tau\leq4.
\end{equation*}
The figures use $\Delta s=0.00125$ and $\Delta\tau=0.0005$. Table~\ref{tab_grid_convergence} compares coarser grids with the reference grid $(\Delta s,\Delta\tau)=(0.000625,0.00025)$. The value difference is the maximum over $s=0.05,0.055,\ldots,8$ and $\tau\in\{0.25,0.5,1,2,4\}$. The boundary difference is the maximum over all four spot boundaries at $\tau=0.01,0.02,\ldots,4$. Since no exact solution is available, these differences measure the numerical resolution only.

\begin{table}[H]
\centering
\begin{tabular}{cccc}
\hline
$\Delta s$ & $\Delta\tau$ & Maximum value difference & Maximum boundary difference\\
\hline
0.01000 & 0.00500 & $5.064\times10^{-4}$ & 0.03750\\
0.00500 & 0.00200 & $1.901\times10^{-4}$ & 0.01250\\
0.00250 & 0.00100 & $8.211\times10^{-5}$ & 0.00750\\
0.00125 & 0.00050 & $2.760\times10^{-5}$ & 0.00250\\
\hline
\end{tabular}
\caption{Grid refinement for $\sigma=0.3$, compared with the finer reference grid.}
\label{tab_grid_convergence}
\end{table}

For the largest volatility used below, $\sigma=0.4$, we also compare the domains $[0.05,12]$ and $[0.025,24]$ at the same mesh sizes $\Delta s=0.0025$ and $\Delta\tau=0.001$. The maximum value difference on the sample set is below $6\times10^{-14}$, and the reported boundaries agree to roundoff. In the three runs used for the figures, the maximum complementarity residual is below $1.9\times10^{-12}$. The obstacle and plateau bounds, time monotonicity and connectedness of each continuation interval are checked during the computation.

Figure~\ref{fig_value_profiles} displays the payoff and the value profiles, which increase with the time to expiry. The plateau stays fixed at $M=1.5$. Figure~\ref{fig_value_surface} shows the three-dimensional value surface together with the time-independent payoff and the four exercise boundaries. The value surface rises above the payoff in the two continuation regions and meets it along the boundary curves.

\begin{figure}[!htbp]
\centering
\includegraphics[width=\textwidth]{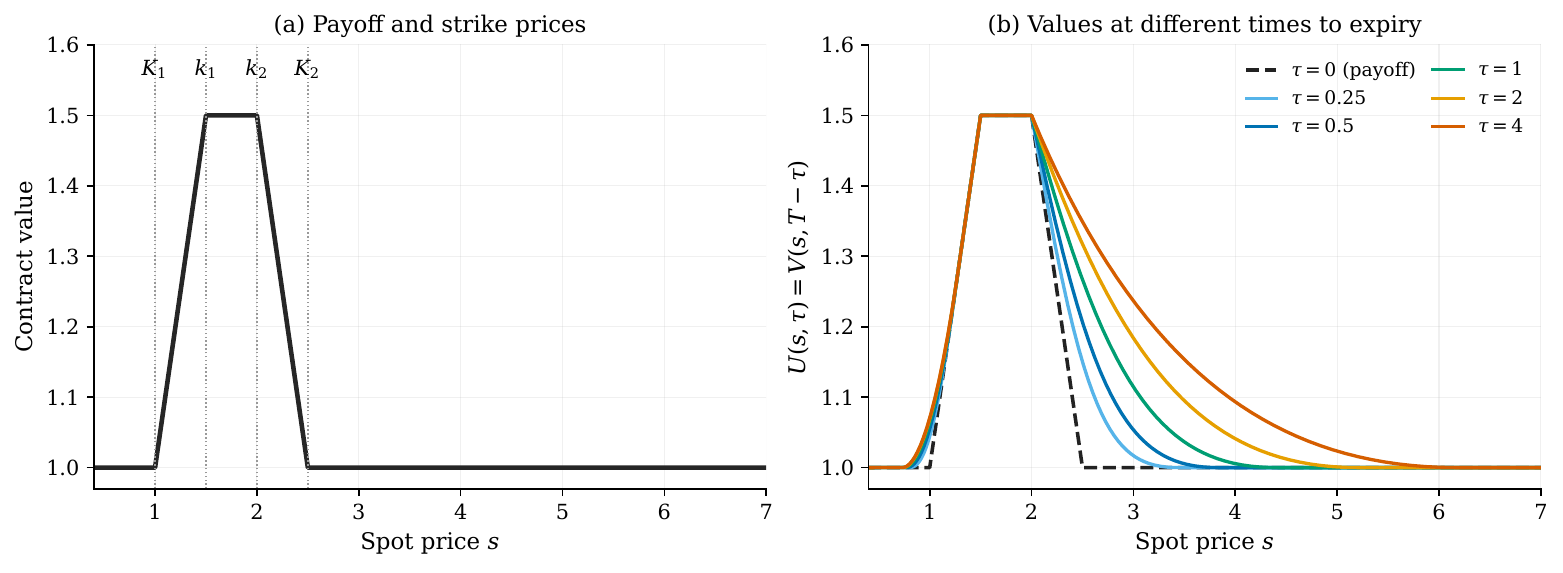}
\caption{Payoff and value profiles $U(s,\tau)$ for $\sigma=0.3$. The dashed profile in the right panel is the terminal payoff $U(s,0)=V_0(s)$.}
\label{fig_value_profiles}
\end{figure}

\begin{figure}[!htbp]
\centering
\includegraphics[width=\textwidth]{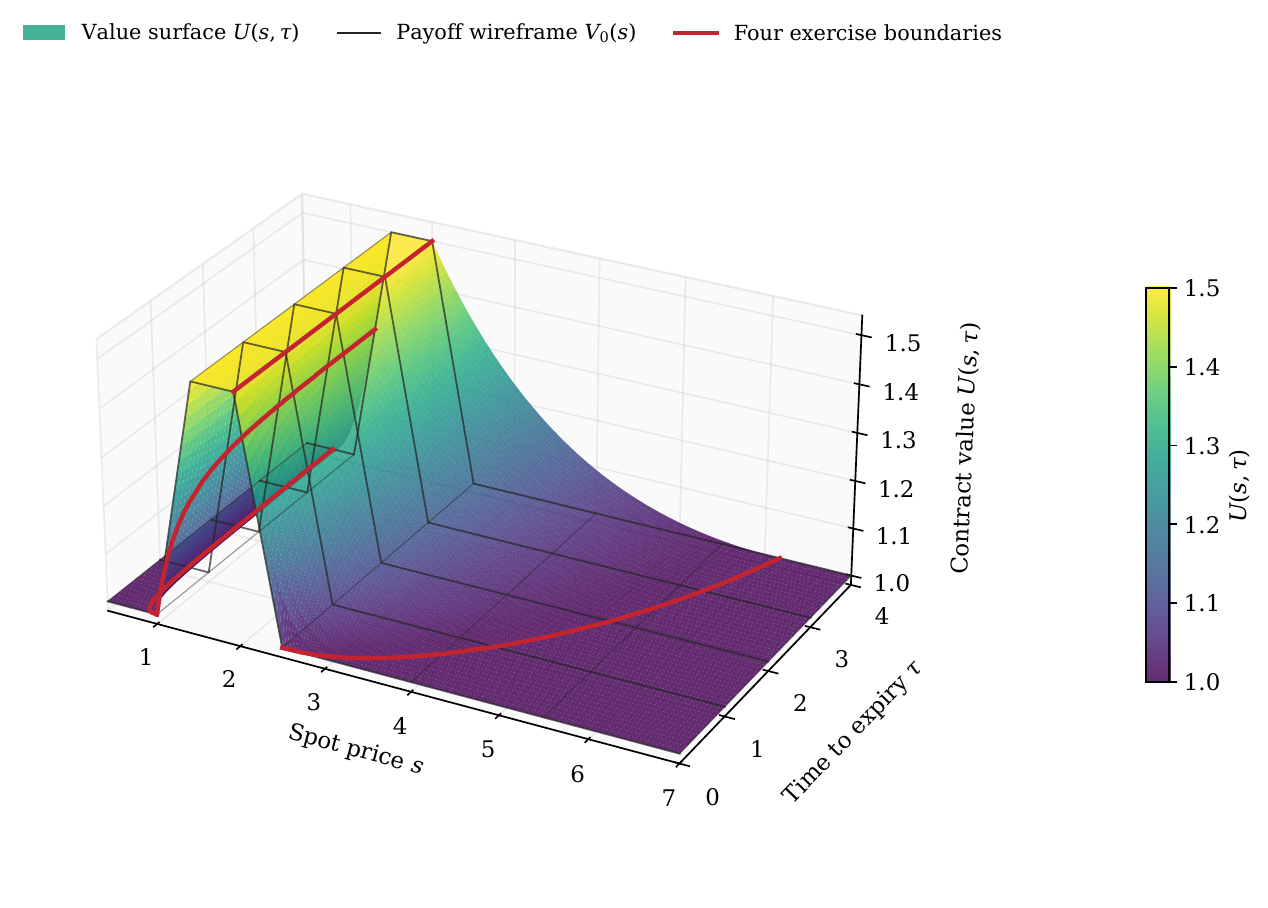}
\caption{Three-dimensional value surface $U(s,\tau)=V(s,T-\tau)$ for $\sigma=0.3$. The black wireframe is the payoff $V_0(s)$, and the four red curves are the spot exercise boundaries, drawn at height $V_0(s)$. The plateau has value $1.5$ for $1.5\leq s\leq2$. Boundary endpoints at $\tau=0$ represent the one-sided expiry limits.}
\label{fig_value_surface}
\end{figure}

Figure~\ref{fig_four_boundaries} shows the boundary curves directly. The continuation regions are
\begin{equation*}
    e^{A(\tau)}<s<e^{B(\tau)}
    \quad\text{and}\quad
    e^{\widetilde B(\tau)}<s<e^{\widetilde A(\tau)}.
\end{equation*}
The complement is the exercise region, including the whole plateau. In this example
\begin{equation*}
    \frac{r(K_2+k_2-K_1)}q=1.75<k_2,
\end{equation*}
so Theorem~\ref{b0_tilde} gives $e^{\widetilde B(\tau)}=k_2=2$ at every positive time. This boundary is therefore horizontal.

\begin{figure}[!htbp]
\centering
\includegraphics[width=0.88\textwidth]{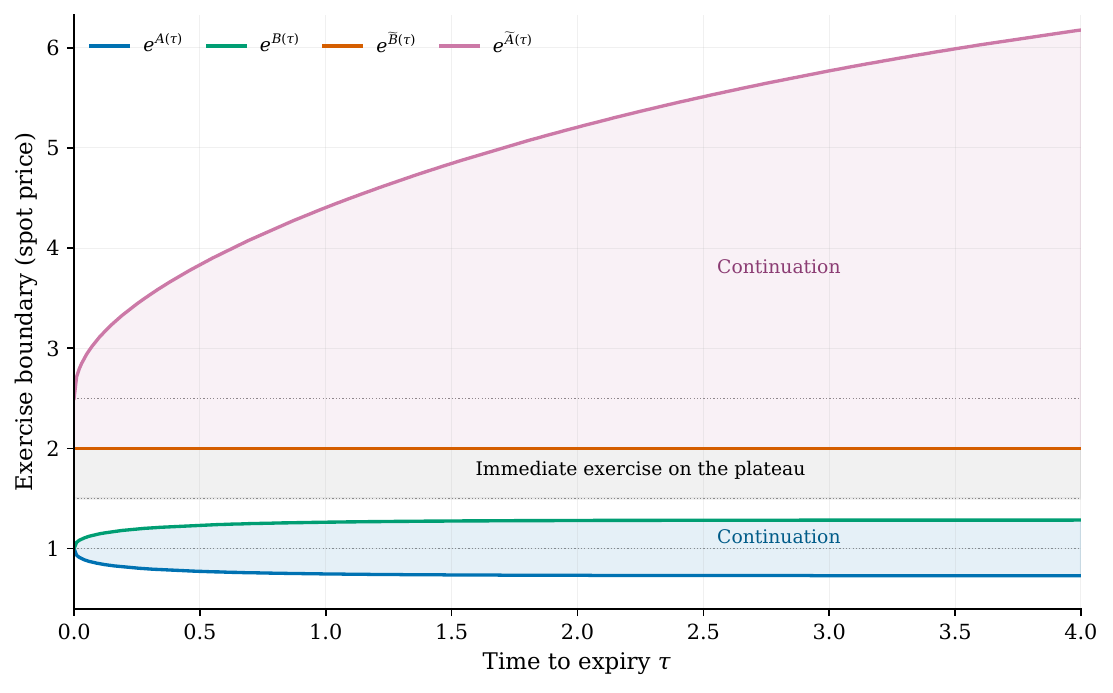}
\caption{The four exercise boundaries in spot coordinates for $\sigma=0.3$. The two colored bands are continuation regions, and the grey band is the plateau, where immediate exercise is optimal.}
\label{fig_four_boundaries}
\end{figure}

Finally, Figure~\ref{fig_volatility_boundaries} compares $\sigma\in\{0.2,0.3,0.4\}$ using the same spatial and temporal grids. Here $K_1+k_1-K_2=0$, so condition~\eqref{cond_convex_cap} holds. The decrease of $e^{A(\tau)}$ and increase of $e^{B(\tau)}$ with volatility agree with Theorem~\ref{thm:prop_freebdry}, and the latter boundary never exceeds $k_1$. The panels for the put-side boundaries are numerical observations for this parameter set. In particular, all three curves for $e^{\widetilde B(\tau)}$ coincide with $k_2$, as predicted above.

\begin{figure}[!htbp]
\centering
\includegraphics[width=\textwidth]{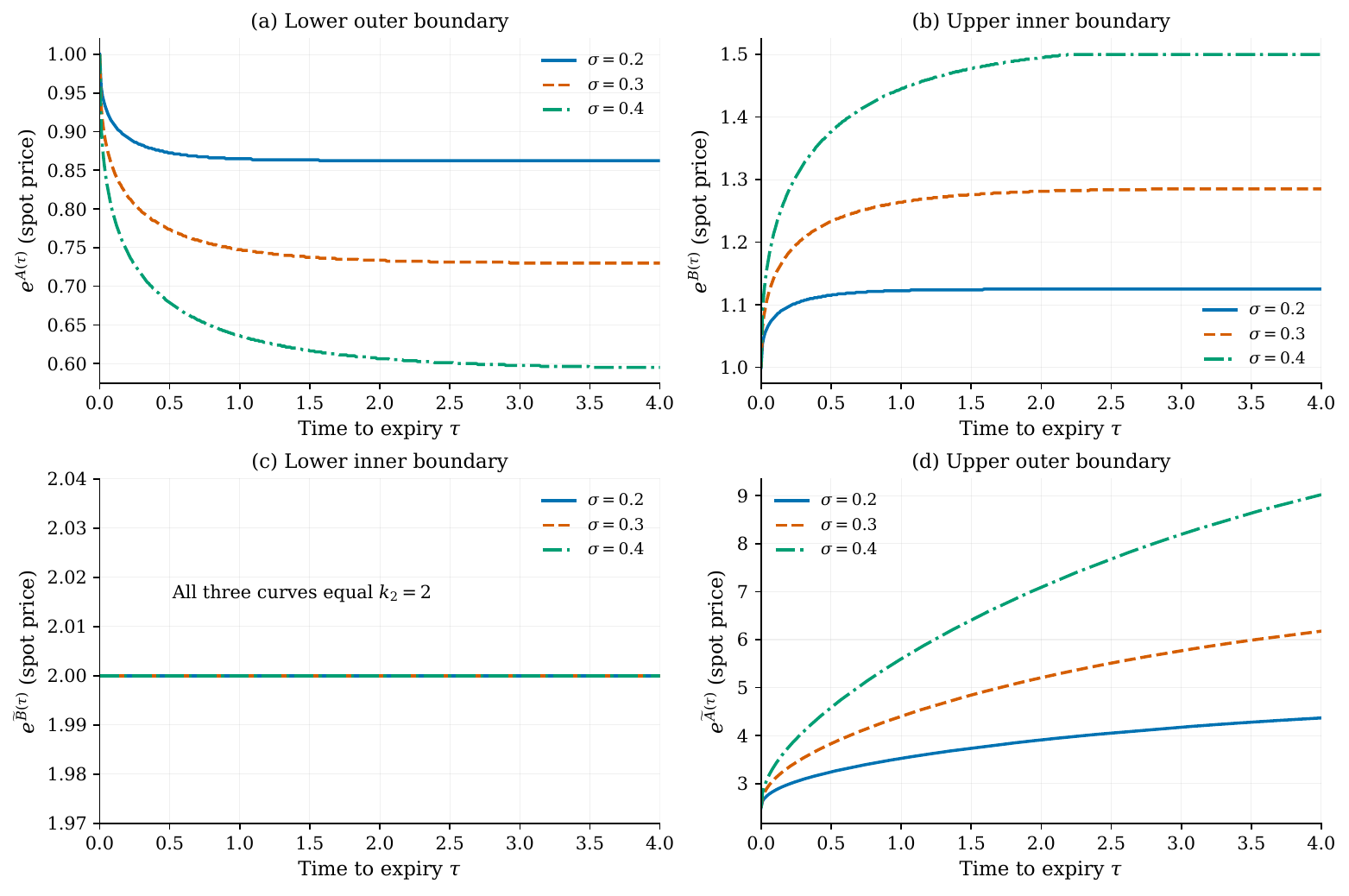}
\caption{Volatility comparison of the four spot boundaries. The horizontal axis is time to expiry. The three right inner boundary curves coincide at $k_2=2$, and the call-side inner boundary is capped at $k_1=1.5$.}
\label{fig_volatility_boundaries}
\end{figure}

\subsection{The convexity condition}
We next test Lemma~\ref{lem3.1} and the zero-dividend counterexample that follows it. For this example, set
\[
 (K_1,k_1,k_2,K_2)=(1,1.5,2,2.3),\qquad r=0.05,\qquad\sigma=0.3,
\]
so that $L:=k_1=1.5$, $D:=K_2-k_1=0.8$, $c:=K_1-D=0.2$ and $M:=K_2-K_1=1.3$,
and compare $q=0.015$ with $q=0$. The sufficient-condition margin $qL-rc$ is
respectively $0.0125$ and $-0.01$. We solve the call-side problem
\eqref{condor_spread_left} in the spot variable on $[0.04,L]$ with the
obstacle $(s-K_1)^++D$, using the coarse and fine $(\Delta s,\Delta\tau)$
pairs $(0.0025,2\times10^{-5})$ and $(0.00125,10^{-5})$ through $\tau=0.1$.
Figure~\ref{fig_convexity_condition_check} shows the resulting gamma, $U_{ss}$,
below the cap. For $q=0.015$, the interval shown in panel (a) lies in the
exercise region at $\tau=0.1$, and the retained smooth stencils have
nonnegative gamma up to roundoff throughout the domain, with negative values
of magnitude below $1.5\times10^{-10}$ at $\tau=0.1$. In contrast, for $q=0$
the gamma at $(s,\tau)=(1.48,0.1)$ is $-0.0827179$ on the coarse mesh and
$-0.0827197$ on the fine mesh. The fine-grid exercise premium there is
$2.05818\times10^{-4}$, placing the point inside continuation.
Halving the fine time step changes this gamma by less than
$4.25\times10^{-7}$, and expanding the left domain to $0.02$ changes it by
less than $1.5\times10^{-10}$.

Interior second differences are retained only when their three nodes lie
on the same smooth payoff branch and are all in continuation or all in
contact. Thus no retained interior stencil crosses the strike, a contact
transition, or the plateau endpoint. We also calculate the one-sided
cap gamma and compare it with the boundary PDE identity
\[
 U_{ss}(L-,\tau)
 =\frac{rM-(r-q)L U_s(L-,\tau)}{\frac12\sigma^2L^2},
\]
using one-sided approximations from below the cap. The identity is applied
only when continuation reaches the cap, as it does for $q=0$ here.
The small-time limit in the discussion following Lemma~\ref{lem3.1} is
$-2rc/(\sigma^2L^2)=-0.0987654$. At $\tau=0.0005$, the fine one-sided
second difference and the cap-identity estimate are $-0.0980720$ and
$-0.0981830$, respectively, and the coarser one-sided estimate is $-0.0970922$.
The refinement comparison in panel (b) makes the resolution of this
initial boundary layer explicit. These computations support the stated
sufficient condition and its zero-dividend counterexample, but they do not
replace the analytical proofs.

\begin{figure}[!htbp]
\centering
\includegraphics[width=0.85\textwidth]{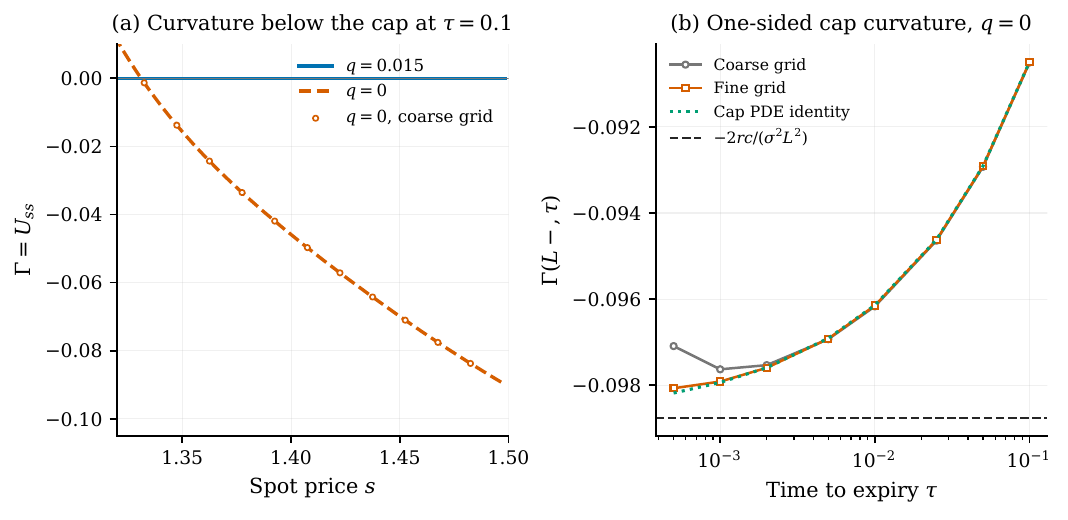}
\caption{The role of the sufficient condition $qL\geq rc$ for
$(K_1,k_1,k_2,K_2)=(1,1.5,2,2.3)$, $r=0.05$ and $\sigma=0.3$.
(a) Fine-grid gamma at $\tau=0.1$ below the cap for $q=0.015$ and $q=0$,
where coarse-grid markers provide a refinement comparison. Only valid smooth
interior stencils are shown. (b) One-sided cap gamma for $q=0$ on both
meshes, the independently evaluated cap PDE identity, and the analytical
small-time limit. No derivative is taken across the plateau corner.}
\label{fig_convexity_condition_check}
\end{figure}

\section*{Data availability}
No empirical data were used in this study, and all numerical parameters are illustrative. The code and data that reproduce the numerical results of Section~\ref{sec6} are available from the corresponding author upon reasonable request.

\bibliography{references}
\bibliographystyle{AIMS}

\end{document}